\author{Anssi Lahtinen, David Sprehn}
\newcommand{\acomment}[1]{} 
\newcommand{\isom}{\approx}
\newcommand{\homot}{\simeq}
\newcommand{\tensor}{\otimes}
\newcommand{\pt}{\mathrm{pt}}
\newcommand{\modmod}{{\slash\!\!\slash}} 
\providecommand{\sslash}{}
\renewcommand{\sslash}{\modmod}
\newcommand{\Emb}{\mathrm{Emb}}
\newcommand{\id}{\mathrm{id}}
\newcommand{\Mat}{\mathrm{Mat}}
\newcommand\xto[2][]{%
  \ext@arrow 1579{\rightarrowfill@}{#1}{#2}}
\newcommand\xot[2][]{%
  \ext@arrow 5197{\leftarrowfill@}{#1}{#2}}
\providecommand*{\twoheadrightarrowfill@}{%
  \arrowfill@\relbar\relbar\twoheadrightarrow
}
\newcommand\xtwoheadrightarrow[2][]{%
  \ext@arrow 1579\twoheadrightarrowfill@{#1}{#2}%
}
\newcommand{\incl}{\hookrightarrow}
\newcommand{\longto}{\longrightarrow}
\newcommand{\im} {\operatorname{Im}}
\renewcommand{\hom} {\operatorname{Hom}}
\newcommand{\sym} {\operatorname{Sym}}
\newcommand{\aut} {\mathrm{Aut}}
\newcommand{\iso} {\approx}
\newcommand{\trna}{\mathrm{tr}}
\newcommand{\inv} {\ensuremath ^{-1}}
\newcommand{\F} {\ensuremath{\mathbb{F}}}
\newcommand{\Z} {\ensuremath{\mathbb{Z}}}
\newcommand{\E} {\ensuremath{\mathbb{E}}}
\newcommand{\A} {\ensuremath{\mathbb{A}}}
\newcommand{\set}[2] {\left\lbrace {#1} \,\middle\arrowvert\, {#2} \right\rbrace}
\newcommand{\+} {\quad\mbox{and}\quad}
\newcommand{\Aff} {\mathrm{Aff}}
\DeclareMathOperator\Fr{Fr}
\DeclareMathOperator\Gr{Gr}
\newtheorem{thm}{Theorem}
\newtheorem{prop}[thm]{Proposition}
\newtheorem{lemma}[thm]{Lemma}
\newtheorem{cor}[thm]{Corollary}
\theoremstyle{definition}
\newtheorem{remark}[thm]{Remark}
\newtheorem{vista}[thm]{Vista}
\newtheorem{defn}[thm]{Definition}
\newcommand{\bbN}{\ensuremath{\mathbb{N}}}
\newcommand{\bbP}{\ensuremath{\mathbb{P}}}
\newcommand{\bbZ}{\ensuremath{\mathbb{Z}}}
\begin{document}

\title{\sc Modular characteristic classes for representations over finite fields}
\date{\today}
\author{Anssi Lahtinen, David Sprehn}

\maketitle

\begin{abstract}
The cohomology of the degree-$n$ general linear group
over a finite field of characteristic $p$,
with coefficients also in characteristic $p$,
remains poorly understood.
For example, the lowest degree previously known to contain nontrivial elements
is exponential in $n$.
In this paper, we introduce a new system of characteristic 
classes for representations over finite fields, and 
use it to construct a wealth of explicit nontrivial elements
in these cohomology groups.
In particular we obtain nontrivial elements in degrees linear in $n$.
We also construct nontrivial elements in the mod $p$
homology and cohomology 
of the automorphism groups of free groups, and the general linear 
groups over the integers. These elements reside in the 
unstable range where the homology and cohomology remain 
mysterious.
\end{abstract}


\section{Introduction}
We introduce a new system of 
modular characteristic classes for 
representations of groups over finite fields, and use it to construct
explicit non-trivial elements in the modular cohomology of the general 
linear groups over finite fields.
The cohomology groups $H^\ast(GL_N\F_{p^r};\, \F)$
were computed by Quillen~\cite{QuillenK} in the case where
$\F$ is a field of characteristic different from $p$,
but he remarked that determining them in the modular
case where the characteristic of $\F$ is $p$
``seems to be a difficult problem once $N\geq3$''
\cite[p.~578]{QuillenK}.
Indeed, 
the modular cohomology has since resisted computation for four decades.
Complete calculations exist only for $N\leq 4$~\cite{Aguade,TYGL3,TYGL4,adem1990symmetric}.  
Much attention has focused on the case where $N$ is small compared to $p$,
e.g.~\cite{Barbu,BNP,BNP2,LieType}.  

To our knowledge, when $N>\max\{p,4\}$, the only previously constructed
nonzero elements of $H^*(GL_N\F_{p^r};\F_p)$
are those due to Milgram and Priddy \cite{MilgramPriddy}, in the case $r=1$.
These reside in 
exponentially high degree: at least $p^{N-2}$.
On the other hand, the cohomology is known to vanish in 
degrees less than $N/2$,
by the stability theorem of 
Maazen~\cite{Maazen} 
together with Quillen's computation~\cite{QuillenK}
that the stable limit is zero.
This leaves a large degree gap where it was not known whether 
the cohomology groups are nontrivial.
We narrow this gap considerably
by providing nonzero classes in degrees linear in $N$.
We obtain:
\begin{thm}[see~Theorem~\ref{nonvanishing}]
\label{introthm}
Let $N\geq2$, and let $n$ be the natural number satisfying
\[ p^{n-1}<N\leq p^n. \]
Then 
\[ H^\ast(GL_N\F_{p^r};\F_p) \]
has a nonzero element in degree 
$r(2p^n-2p^{n-1}-1)$.
Moreover, it has a non-nilpotent element in degree
$2r(p^n-1)$
if $p$ is odd and in degree $r(2^n-1)$ if $p=2$.
\qed
\end{thm}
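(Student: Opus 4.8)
The plan is to realize the claimed classes by evaluating a natural system of mod-$p$ characteristic classes for $\F_q$-representations (with $q=p^r$) on the tautological representation $V=\F_q^N$ of $GL_N\F_q$, and then detecting nonvanishing by restriction to a well-chosen subgroup. Such classes --- natural assignments $c_i(W)\in H^*(G;\F_p)$ to an $\F_q$-representation $W$ of a group $G$ --- can be constructed, for instance, from the permutation action of $G$ on the underlying finite set of $W$ (or of $W\setminus\{0\}$) together with the standard classes of $B\Sigma_m$; what the argument needs is naturality, multiplicativity of the total class under direct sums, and a handle on the relevant class in terms of explicit data (in particular that $c_n$ has intrinsic degree $2r(p^n-1)$, together with a companion class of degree $r(2p^n-2p^{n-1}-1)$). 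Granting this, the theorem reduces to showing that $c_n(V)$ and the companion class are nonzero, and that $c_n(V)$ is non-nilpotent; by naturality it suffices to find a subgroup $H\hookrightarrow GL_N\F_q$ on which these restrictions can be computed and shown nonzero. The inequality $p^{n-1}<N\le p^n$ is exactly the condition under which a single unipotent Jordan block of size $N$ has order precisely $p^n$, so that $C:=\Z/p^n$ embeds as the subgroup generated by such an element; this $C$ will be the detecting subgroup.

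The heart of the proof is the computation of $c_n$ and its companion on $V|_C$. As a $C$-module, $V|_C$ is a single size-$N$ Jordan block over $\F_q$, faithful precisely because $N>p^{n-1}$; over $\F_p$ it is $r$ copies of the size-$N$ Jordan block over $\F_p$, which is the source of the factor $r$ throughout. Using multiplicativity, everything reduces to the characteristic classes of the $\F_p$ Jordan block $U_N$ in $H^*(\Z/p^n;\F_p)=\Lambda(x)\otimes\F_p[y]$ (with $|x|=1$, $|y|=2$; and $\F_2[y]$, $|y|=1$, when $p=2$). The crucial --- and genuinely new --- point is that, although the ordinary mod-$p$ Chern and Euler classes of a faithful representation of a cyclic $p$-group of large exponent all vanish (which is precisely why this cohomology has been inaccessible), the class $c_n(U_N)$ does not: it is a nonzero multiple of a power of the polynomial generator $y$, with exponent governed by $n$ rather than $N$, so that $c_n(V|_C)$ is a nonzero multiple of $y^{r(p^n-1)}$, in degree $2r(p^n-1)$ (respectively $y^{r(2^n-1)}$, in degree $r(2^n-1)$, when $p=2$). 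Being a nonzero power of $y$, this is non-nilpotent in $H^*(\Z/p^n;\F_p)$; since restriction is a ring homomorphism, $c_n(V)$ is therefore non-nilpotent in $H^*(GL_N\F_q;\F_p)$ --- this is also an instance of Quillen's nilpotence-detection criterion, via further restriction to $\Z/p\le C$. The companion class in degree $r(2p^n-2p^{n-1}-1)$ is handled by the same restrict-and-compute strategy: for $p=2$ this degree equals $r(2^n-1)$ and is already covered by the above, while for odd $p$ one either uses the companion of $c_n$ directly on $V|_C$ or, if convenient, enlarges $C$ by an order-$p$ element and reads off a Dickson-type invariant there.

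The principal obstacle is the computation underlying the previous paragraph: proving that $c_n$ (and its companion) of the Jordan-block modules $U_N$ is nonzero and lands in exactly the stated degree. This is delicate precisely because the classical characteristic classes one would first reach for are null in this range, so the nonvanishing cannot be inferred from them; one needs the explicit structure of the new classes --- their behavior under direct sums, Frobenius twists, and restriction of scalars, and their values on the basic building blocks (Jordan-block modules of cyclic $p$-groups, regular representations of elementary abelian $p$-groups) --- worked out in enough detail to pin down both the nonvanishing and the bookkeeping that converts the module dimension $N$ into the exponents $p^n-1$ and $p^n-p^{n-1}-1$. Setting up the characteristic classes with that much computable structure is itself the substantive part of the work; once it is in place, the passage back to $GL_N\F_q$ via naturality and the non-nilpotence assertion are formal.
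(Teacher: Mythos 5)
Your proposal is structurally different from the paper's in two essential ways, and both differences lead to genuine gaps rather than to an alternative valid route.

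First, you posit a system of classes whose \emph{total class is multiplicative under direct sums}, and you suggest building them from the permutation action of $G$ on the finite set $W$ together with classes of $B\Sigma_m$. This is precisely the Milgram--Priddy style of construction that the paper cites as the prior state of the art, and those classes live in degree at least $p^{N-2}$ --- exponential in $N$, not linear. There is no known way to extract classes in degree $\sim 2r(p^n-1)$ from that source, and you supply none. More importantly, the paper's classes $\chi_\alpha$ have the \emph{opposite} formal property: they \emph{vanish} on all nontrivial direct sums (Corollary~\ref{cor:vanishingondecomposables}). The correct "Whitney-type" formula in this story is the wedge-sum formula of Theorem~\ref{wedgeformula} --- relative to a fixed vector, with a coproduct on $H^*(GL_2\F_q)$ replacing the usual degree bookkeeping --- not multiplicativity over $\oplus$. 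Your stipulated axiom points in the wrong direction, so even granting the existence of \emph{some} classes with your properties, they would not be the ones the paper constructs, and you give no construction of your own.

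Second, your detecting subgroup is the cyclic group $C=\Z/p^n$ generated by a Jordan block. The paper instead detects on the elementary abelian subgroup $\F_q^n$, via the basic representations $\rho_{\F_q^n}$ and the $J_1$-reduction of Theorem~\ref{J1}, and with good reason. If you restrict the paper's classes along the Jordan-block embedding of $\Z/p^n$ with $n\ge 2$, the only $C$-stable $2$-plane in the Jordan block is $J_1$, all other orbits of $2$-planes contribute transfers from proper subgroups of a cyclic $p$-group (which vanish in positive degree), and the action of $C$ on $J_1$ factors through $C\to\Z/p\hookrightarrow GL_2\F_q$. But the polynomial generator $y\in H^2(\Z/p;\F_p)$ is the Bockstein of the degree-one class, and the Bockstein vanishes on $H^1(\Z/p^n;\F_p)$ for $n\ge 2$; hence $y$ pulls back to $0$ along $\Z/p^n\twoheadrightarrow\Z/p$. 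So the claim that "$c_n(V|_C)$ is a nonzero multiple of $y^{r(p^n-1)}$ in $H^*(\Z/p^n;\F_p)$" is not something any of the classes in play actually do, and your own proposal concedes that the classical Chern/Euler classes fail here. The assertion that some new class achieves this is exactly the substance of the theorem, and it is left unproved.

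To summarize the contrast: the paper's mechanism is (a) a push--pull construction over the parabolic $P\le GL_N\F_q$ giving classes $\chi_\alpha$ that vanish on direct sums, (b) a wedge-sum/coproduct formula allowing explicit computation on basic representations of $\F_q^n$, (c) a theorem that for elementary abelian groups only the subrepresentation $J_1(\xi)$ matters, so that the $p^n$-dimensional representation $\sym^{p-1}(\F_q^2)^{\otimes n}$ has the same classes as $\rho_{\F_q^n}$, and (d) parabolic-induction compatibility to pass down to all $2\le N\le p^n$. None of (a)--(d) appears in your proposal, and your two structural choices --- multiplicativity under $\oplus$, and detection on $\Z/p^n$ --- are incompatible with the actual argument. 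The reduction "once the classes exist, the rest is formal" is not a fair description of the proof; the existence and computability of the classes, together with the choice of the right detecting subgroup, is essentially all of it.
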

\noindent
Notice that the degrees in the theorem grow linearly with $N$: 
if $d$ is any of the degrees mentioned in the theorem, then
\[ r(N-1)\leq d< 2r(pN-1). \]
In the case $r=1$, we obtain
stronger results, for instance:
\begin{thm}[see~Theorem~\ref{r1nonvanishing}]
\label{introthm2}
For all $N \geq 2$,
\[ H^\ast(GL_N\F_2;\F_2) \]
has a non-nilpotent element of degree $d$ for every 
$d$ with at least $\lceil\log_2 N\rceil$ 
ones in its binary expansion. \qed
\end{thm}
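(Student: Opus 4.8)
The plan is to detect every required class on a single elementary abelian $2$-subgroup, by showing that the characteristic classes of the defining representation $\tau_N\colon GL_N\F_2\to GL_N\F_2$ restrict there onto a polynomial family of Dickson invariants. The first, immediate, observation is that a class in $H^*(GL_N\F_2;\F_2)$ is non-nilpotent as soon as it restricts nontrivially to some elementary abelian $2$-subgroup $E$, since $H^*(E;\F_2)$ is a polynomial ring and hence a domain. Writing $n=\lceil\log_2 N\rceil$, it therefore suffices to produce, for each $d$ having at least $n$ ones in its binary expansion, a class in $H^d(GL_N\F_2;\F_2)$ whose restriction to a fixed subgroup $E\cong(\Z/2)^m$ is nonzero, where $m$ will be $n$ or $n-1$.

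For the subgroup I would take $E\cong(\Z/2)^n$ realised inside $GL_N\F_2$ by the regular representation when $N=2^n$, and $E\cong(\Z/2)^{n-1}$ realised so that $\F_2^N$ contains the regular representation of $E$ as a direct summand when $2^{n-1}<N<2^n$ (possible since $\dim_{\F_2}\F_2[E]=2^{n-1}<N$). The crux is then to show that, exactly as in the proof of Theorem~\ref{nonvanishing}—where the degree $2^n-1$ non-nilpotent class appears as the lift of the top Dickson invariant $\prod_{0\ne v\in E^\vee}v$—the characteristic classes of $\tau_N$ restrict along $E$ onto the \emph{whole} Dickson algebra $\mathbf D_m\subseteq H^*(E;\F_2)$: the subalgebra they generate in $H^*(GL_N\F_2;\F_2)$ maps onto $\mathbf D_m$ under restriction, the class in degree $2^m-2^j$ hitting the Dickson generator $c_{m,j}$ for $0\le j\le m-1$. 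I expect this from a direct computation of the construction on the regular $\F_2[E]$-module—whose characteristic classes are automatically $\aut(E)$-invariant, hence Dickson invariants—using the same invariant theory that underlies Theorem~\ref{nonvanishing}. Granting it, for each degree $d$ with $(\mathbf D_m)_d\ne 0$ one chooses $0\ne\xi\in(\mathbf D_m)_d$, lifts it to $\tilde\xi\in H^d(GL_N\F_2;\F_2)$, and concludes from the first paragraph that $\tilde\xi$ is non-nilpotent.

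The remaining input is a numerical lemma: $\mathbf D_m$ is nonzero in every degree having at least $m$ ones in binary. The smallest such degree is $2^m-1=\deg c_{m,0}$, and every such degree is $\ge 2^m-1$; since the generator degrees $2^m-2^j$ have greatest common divisor $1$, all sufficiently large integers are non-negative combinations of them, so only a bounded initial range must be checked directly. As any integer with at least $n$ ones has at least $m$ ones (because $m\le n$), this yields the asserted non-nilpotent class in every required degree when $N\ge 3$. Finally $N=2$, where $n=1$, is already handled by Theorem~\ref{nonvanishing}, which provides a non-nilpotent class in degree $1$ whose powers are non-nilpotent in every positive degree.

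The main obstacle is the middle step: promoting the single Dickson class of Theorem~\ref{nonvanishing} to the entire Dickson subalgebra realised on one fixed $E$. That theorem only supplies a seed in the top degree $2^m-1$, from which neither multiplication nor Steenrod operations can descend to the lower-degree generators, so one genuinely has to control the restriction of all the characteristic classes of $\tau_N$, i.e.\ evaluate the construction on the regular representation of an elementary abelian group (and on its enlargement by trivial summands). A secondary point—and the reason to use $m=n-1$ rather than $m=n$ when $N$ is not a power of $2$—is to guarantee that $E$ embeds in $GL_N\F_2$ with $\tau_N|_E$ carrying the requisite regular summand.
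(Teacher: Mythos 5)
Your opening reduction is sound: over $\F_2$, a class that restricts nontrivially to an elementary abelian $2$-subgroup is non-nilpotent, since $H^*((\Z/2)^m;\F_2)$ is a polynomial ring and hence a domain; this is in fact how the paper detects non-nilpotence on $GL_{2^n}\F_2$. However, your ``crux'' step is not merely unverified but false. By the computation in section~\ref{Dickson}, the restriction of $\chi_{y^k}^{(2^n)}$ along the regular representation of $E=\F_2^n$ is the power sum $\sum_{0\neq z\in E^*} z^k$, and Proposition~\ref{primecasenonvanishing} shows this vanishes unless $s_2(k)\geq n$; in particular it vanishes for all $k<2^n-1$. Thus the characteristic classes of $\tau_{2^n}$ do \emph{not} restrict onto the whole Dickson algebra: they live entirely in the ideal generated by the top Dickson invariant $D_{2^n-1}$ (the paper shows the total class equals $-D_{2^n-1}\cdot D^{-1}$), and no $\chi_{y^k}$ hits the lower-degree Dickson generators. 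Your lift in degree $d$ therefore cannot be produced by first finding an arbitrary nonzero element of the Dickson algebra in degree $d$; the correct, and simpler, argument is to take $\chi_{y^d}^{(N)}$ itself and show via the multinomial-coefficient analysis of equation~\eqref{multinomialformula} (Lucas' theorem) that its restriction is nonzero precisely when $s_2(d)\geq n$.

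Your handling of $N$ that is not a power of $2$ has a second, independent failure. You propose to take $E\cong(\Z/2)^{n-1}$ with $\tau_N|_E$ containing the regular representation of $E$ as a direct summand and a nonzero trivial complement. But by Corollary~\ref{cor:vanishingondecomposables}, $\chi_\alpha(\rho\oplus\eta)=0$ whenever $\rho$ and $\eta$ are both nonzero, so $\chi_\alpha(\tau_N|_E)=0$ for every $\alpha$ and nothing at all is detected on that subgroup. The paper handles $N<2^n$ either through the parabolic induction maps (Corollary~\ref{cor:parindclasses}, using that transfers commute with Steenrod operations), or---closer in spirit to your approach---by embedding $\F_2^n$ (not $\F_2^{n-1}$) into $GL_N\F_2$ via an $N$-dimensional \emph{sub}representation $\xi$ of the regular representation $\xi_{\F_2^n}$ with $\rho_{\F_2^n}\leq\xi\leq\xi_{\F_2^n}$ (possible whenever $n+1\leq N\leq 2^n$, which holds for $N\geq 2$): then $J_1(\xi)=\rho_{\F_2^n}$ and Theorem~\ref{J1} gives $\chi_\alpha(\xi)=\chi_\alpha(\rho_{\F_2^n})$. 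Shrinking the representation rather than padding it with trivial summands is essential, as the latter kills the invariant.
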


Our characteristic classes are defined
for representations of dimension $N\geq2$ over the finite field $\F_{p^r}$,
and they are modular in the sense that they take values in group 
cohomology with 
coefficients in a field $\F$ of characteristic $p$.
Thus they are interesting even for $p$-groups.
The family of characteristic classes is parametrized by the 
cohomology of $GL_2\F_{p^r}$.
We will show that many classes in this family are nonzero by finding
representations $\rho$ on which they are nontrivial.  This will produce a family of nonzero 
cohomology classes on the general linear groups, namely the ``universal classes'' 
obtained by applying the characteristic classes 
to the defining representation of $GL_N\F_{p^r}$ where $N$ is the 
dimension of $\rho$.

The characteristic classes are defined in terms of a push--pull
construction featuring a transfer map.
This construction was previously studied by the second author
in \cite{LieType}, where he proved that it yields an injective map
\[ H^*(GL_2\F_{p^r};\F_p)\longto H^*(GL_N\F_{p^r};\F_p) \]
for $2\leq N\leq p$.
The present work was inspired by computations of the 
first author in string topology of classifying spaces 
\cite{StringTopComp} featuring similar push--pull constructions.

In addition to the groups $GL_N\F_{p^r}$, 
our characteristic classes can be used to study other groups 
with interesting representations over finite fields.
For example:
\begin{thm}[see~Theorem~\ref{thm:glzautfn} and Proposition~\ref{primecasenonvanishing}]
\label{thm:intro-othergrps}
For all $n\geq 1$,
\[ 
	H^*(\aut(F_{p^n});\,\F_p)
	\qquad\text{and}\qquad
	H^*(GL_{p^n}\Z;\,\F_p)	
\]
have a non-nilpotent element of degree $2d$ for every $d$ 
whose $p$-ary digits sum up to $k(p-1)$ for some $k \geq n$.
In particular, there is a non-nilpotent element of degree $2p^n-2$.
(For $p=2$, divide degrees by 2.) \qed
\end{thm}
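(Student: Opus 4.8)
The plan is to detect the required classes on a single elementary abelian subgroup sitting compatibly inside all of the groups in sight. Put $V=(\Z/p)^n$ and let $V\incl\Sigma_{p^n}$ be the Cayley embedding given by the left regular action of $V$ on its $p^n$ elements. Composing with the evident homomorphisms $\Sigma_{p^n}\longto\aut(F_{p^n})\longto GL_{p^n}\Z\longto GL_{p^n}\F_p$ (permutation of a free basis; the action on $H_1(F_{p^n};\Z)$; reduction mod $p$), we obtain a representation $V\to GL_{p^n}\F_p$ which is exactly the regular representation $\F_p[V]$. Fix a degree $d$ as in the statement. Since the $p$-ary digit sum of $d$ is congruent to $d$ modulo $p-1$, the hypothesis that this digit sum be of the form $k(p-1)$ forces $(p-1)\mid d$, so we may set $\gamma=c^{\,d/(p-1)}$, where $c$ is the polynomial generator of $H^*(GL_2\F_p;\F_p)\cong\F_p[c]\otimes\Lambda(e)$ ($|c|=2(p-1)$, $|e|=2p-3$), normalized so that its restriction to the unipotent $\Z/p\le GL_2\F_p$ is $y^{p-1}$. (When $p=2$ one has $H^*(GL_2\F_2;\F_2)=\F_2[t]$ with $|t|=1$, and one takes instead $\gamma=t^{\,d}$; this accounts for the halving of degrees.) The push--pull construction of this paper, applied to the defining representation of $GL_{p^n}\F_p$, then yields a universal class $c_\gamma\in H^{2d}(GL_{p^n}\F_p;\F_p)$, whose pullbacks along the maps above are the candidate elements of $H^{2d}(\aut(F_{p^n});\F_p)$ and $H^{2d}(GL_{p^n}\Z;\F_p)$. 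Because a class whose restriction to a subgroup is non-nilpotent is itself non-nilpotent, by naturality it suffices to show that $c_\gamma(\F_p[V])\in H^*(V;\F_p)$ is non-nilpotent.

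To compute $c_\gamma(\F_p[V])$ I would use the orbit description of the construction: for a representation $W$ of a group $G$, the class $c_\gamma(W)$ is the sum, over $G$-orbits of $2$-dimensional subspaces $U\subseteq W$, of $\trna^G_{\mathrm{Stab}(U)}$ applied to the pullback of $\gamma$ along $\mathrm{Stab}(U)\to GL(U)\cong GL_2\F_p$. For $G=V$ elementary abelian every subgroup is a direct factor, so each restriction $H^*(V)\to H^*(H)$ is surjective; since the transfer is a module map over $H^*(V)$ and $\trna^V_H(1)=[V:H]=0$ in $\F_p$-cohomology for $H\subsetneq V$, all transfers from proper subgroups vanish identically. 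Hence only the $V$-fixed $2$-subspaces of $\F_p[V]$ contribute, namely the $2$-dimensional submodules. As $\F_p[V]$ is a local ring with $1$-dimensional socle, these are exactly the ideals spanned by the socle together with a vector lying over a line in the $n$-dimensional socle of $\F_p[V]/(\text{socle})$; there are $(p^n-1)/(p-1)$ of them, indexed by $\ell\in\bbP^{n-1}(\F_p)$, and $V$ acts on the one indexed by $\ell=[c_1:\dots:c_n]$ through the homomorphism sending $v$ to the unipotent matrix with upper-right entry $\sum_i c_i v_i$. Therefore $c_\gamma(\F_p[V])$ equals the sum over $\ell$ of the $d$-th power of the linear form $\sum_i c_i y_i$ in the generators $y_i\in H^2(V;\F_p)$ (well defined since $(p-1)\mid d$); counting each line with its $p-1$ nonzero representatives, $c_\gamma(\F_p[V])=-\sum_{v\in\F_p^n}\bigl(\sum_i v_i y_i\bigr)^{d}$, which lies in the polynomial subring $\F_p[y_1,\dots,y_n]$ (all of $H^*(V;\F_p)$ when $p=2$).

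It then remains to see that this sum is nonzero in $\F_p[y_1,\dots,y_n]$, which, that ring being a domain, is the same as non-nilpotence. Expanding multinomially and using that $\sum_{t\in\F_p}t^{a}$ equals $-1$ when $a\ge1$ and $(p-1)\mid a$ and vanishes otherwise, one gets $\sum_{v}\bigl(\sum_i v_iy_i\bigr)^{d}=(-1)^n\sum\binom{d}{a_1,\dots,a_n}\,y_1^{a_1}\cdots y_n^{a_n}$, the sum over $(a_1,\dots,a_n)$ with $\sum_i a_i=d$, every $a_i\ge1$, and every $a_i\equiv0\pmod{p-1}$. By Kummer's theorem the multinomial coefficient is nonzero mod $p$ exactly when the base-$p$ addition $a_1+\dots+a_n$ has no carries, in which case the digit sums of the $a_i$ add up to that of $d$; and every admissible $a_i$ has digit sum a positive multiple of $p-1$. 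Hence such a carry-free decomposition exists if and only if the digit sum of $d$ can be written as a sum of $n$ positive multiples of $p-1$ — equivalently, equals $k(p-1)$ with $k\ge n$ — the sufficiency being seen by distributing the digit-units of $d$ into $n$ nonempty blocks of the prescribed sizes. This is precisely our hypothesis, so the sum is nonzero and the class is non-nilpotent; taking $d=p^n-1$ (digit sum $n(p-1)$, so $k=n$) gives the promised class in degree $2p^n-2$.

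I expect the main obstacle to be the middle step: reducing $c_\gamma$ on the regular representation to the finite sum over fixed $2$-subspaces. This rests on the (clean but essential) vanishing of all transfers from proper subgroups of an elementary abelian $p$-group, and on identifying the $2$-dimensional $\F_p[V]$-submodules together with the $GL_2\F_p$-representations they carry; granting these, the rest is the classical evaluation of $\sum_v(v\cdot y)^d$. One should also verify the routine point that the maps $\Sigma_{p^n}\to\aut(F_{p^n})$ and $\Sigma_{p^n}\to GL_{p^n}\Z$ do induce, after reduction mod $p$ and restriction to $V$, exactly the regular representation, and carry out the usual small-prime bookkeeping for $p=2$.
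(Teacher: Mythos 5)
Your proof is correct and reaches the same endpoints as the paper (factor the regular representation through $\Sigma_{p^n}\to\aut(F_{p^n})\to GL_{p^n}\Z\to GL_{p^n}\F_p$, reduce non-nilpotence to a computation on $V=\F_p^n$, detect via a carry-free multinomial), but the middle of the argument takes a genuinely different and more direct route. The paper first proves the ``reduction to $J_1$'' theorem (using the same key observation you make, namely that transfers from proper subgroups of an elementary abelian $p$-group vanish), deducing $\chi_\alpha(\F_p[V])=\chi_\alpha(\rho_{\F_p^n})$, and then evaluates $\chi_\alpha(\rho_{\F_p^n})$ by decomposing $\rho_{\F_p^n}$ as an $n$-fold wedge sum and invoking the wedge sum formula (Theorem~\ref{wedgeformula}), whose proof runs through an ``affine'' version of the push--pull, the coproduct on $H^*(GL_2\F_q)$, and Lemma~\ref{hatclassesagree}. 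You instead bypass the wedge sum formula entirely: you use the orbit decomposition of the covering $\Gr_2(\rho)\sslash V\to BV$ together with the transfer-vanishing to reduce to $V$-fixed $2$-planes, and then you directly enumerate the $2$-dimensional submodules of $\F_p[V]$ and their unipotent actions, arriving immediately at $\chi_{y^d}(\F_p[V])=-\sum_{v\in\F_p^n}(\sum_i v_iy_i)^d$; this recovers the identity the paper derives in section~\ref{Dickson} as a consequence of \eqref{multinomialformula}, but with less machinery. What the paper's route buys is the wedge sum formula itself, valid for general $q$ and arbitrary pointed representations of arbitrary groups, which is used elsewhere (e.g.\ Theorem~\ref{nonvanishing} for $r>1$); what your route buys is a shorter, self-contained path to the $r=1$ formula needed here. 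Two minor points to tidy up if you wrote this out in full: the identification $\mathrm{Stab}(U)\to GL(U)\cong GL_2\F_p$ is only canonical up to conjugacy, which is harmless since induced maps on cohomology are conjugation-invariant (and in any case your unipotent target is picked out by the filtration $\mathrm{soc}(U)\subset U$); and the normalization of the generator $c$ (restricting to $y^{p-1}$ on the unipotent) is well defined precisely because $y^{p-1}$ is $\F_p^\times$-invariant.
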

\noindent
These classes live in the unstable range where 
the cohomology groups remain poorly understood.
For previous work on unstable classes in the homology and cohomology 
of $\aut(F_n)$ and $GL_n(\Z)$ (as well as the closely related
groups $\mathrm{Out}(F_n)$ and $SL_n(\Z)$), see for example
\cite{Chen,MR1709957,MR1671188,MR1734418,MR1781542,MR1815782,MR2119302,MR2367210,
MR2580660,Gray,MR3029423,CHKV}
and
\cite{MR0491893,MR0470141,MR1029906,MR1167176,MR1332169,MR3084439,MR3290086}.

The paper proceeds as follows.  In section~\ref{constructions},
we summarize the behavior 
of our characteristic classes and give two equivalent constructions of them.
In section~\ref{parabolicinduction}, we show that the universal classes on $GL_N\F_{p^r}$ for various $N$ are related by parabolic induction maps.
In section~\ref{decomposables}, we prove that our characteristic classes
 vanish on representations decomposable as a direct sum.  In sections~\ref{cohofGL2} and~\ref{coprod}, we recall the cohomology of $GL_2\F_{p^r}$ and describe a 
coalgebra structure on it.  In section~\ref{wedge}, we develop a formula for the characteristic classes of a representation decomposable as a ``wedge sum.''  
In section~\ref{basicrep}, we introduce a family of 
``basic representations'' which decompose as such wedge sums,
and in section~\ref{nontriviality}, we
show that many of the characteristic classes of these
representations are nonzero.  
In section~\ref{elemab}, we study representations of elementary abelian groups, proving that their characteristic classes agree with 
those of a certain subrepresentation. 
In section~\ref{largerep}, we exploit this property to construct
high-dimensional representations with the same characteristic 
classes as the basic
representations, and deduce our main results on the cohomology of general linear groups (Theorems~\ref{nonvanishing} and \ref{r1nonvanishing}).  
In section~\ref{Dickson}, we restrict to the prime fields 
$\F_p$, and give an alternative description of
some of the characteristic classes of the basic representations 
in terms of Dickson invariants.
This allows us to deduce that a certain subset of the 
universal classes is algebraically independent
(Theorem~\ref{thm:algindependence}).
Finally, in section~\ref{sec:othergroups},
we present applications to the homology and cohomology
of automorphism groups of free groups and 
general linear groups over the integers.

\subsection*{Conventions}

Throughout the paper, $p$ is a prime and $q = p^r$ is a power of $p$.
A ``representation'' means a finite-dimensional 
representation over the field $\F_q$.
Unless noted otherwise, 
homology and cohomology will be with coefficients in a field $\F$
of characteristic $p$, which will be omitted from notation.
The main interest is in the cases $\F = \F_p$ and $\F = \F_q$,
the latter being interesting since it allows for an explicit 
description of $H^\ast(GL_2\F_q)$.

As our characteristic classes $\chi_\alpha$ 
are only defined for representations of dimension at least 2,
whenever the notation $\chi_\alpha(\rho)$ appears, it is implicitly
assumed that the representation $\rho$ has dimension at least 2.

\section{Characteristic classes}\label{constructions}

We will now define our characteristic classes.
Given $\alpha\in H^\ast(GL_2\F_q)$,
to each $N$-dimensional representation $\rho$ 
over $\F_q$ of a group $G$ (with $N\geq 2$), we will associate
a class 
\[
	\chi_\alpha(\rho) \in H^\ast(G).
\]
only depending on the isomorphism class of $\rho$.
As required of a characteristic class, 
the assignment $\rho \mapsto \chi_\alpha(\rho)$
will be natural with respect to group homomorphisms
in the sense that 
\[
	\chi_\alpha(f^\ast\rho) = f^\ast\chi_\alpha(\rho)
\]
for any group homomorphism $f$ to the domain of $\rho$.
In fact we will give two alternative constructions of the 
characteristic classes,
Definitions~\ref{def:chiclassdef1} and \ref{def:chiclassdef2} 
below, and prove their 
equivalence. The following theorem summarizes 
the basic properties of the classes.
\begin{thm}
\label{thm:chiclassprops} 
The characteristic classes $\chi_\alpha$
have the following properties.
\begin{enumerate}[(i)]
\item (Nontriviality, Remark~\ref{rk:quickprops})
\label{thmpart:props-nontriviality}
For the identity representation of $GL_2\F_q$, we have
\[ \chi_\alpha(\id_{GL_2\F_q})=\alpha \] 
for all $\alpha \in H^{>0}(GL_2\F_q)$.
\item (Vanishing on decomposables, Corollary~\ref{cor:vanishingondecomposables})
\label{thmpart:props-vanishinondecomposables}
If $\rho$ and $\eta$ are nonzero representations of $G$, then
\[ \chi_\alpha(\rho\oplus\eta)=0. \]
\item (Wedge sum formula, Theorem~\ref{wedgeformula}) 
	\label{thmpart:props-wedgesum}
	Suppose $\rho$ and $\eta$ are representations of $G$,
	and $v_0\in \rho$ and $w_0 \in \eta$ are vectors 
	fixed by $G$. Then
	\[ 
    	\chi_\alpha(\rho\vee\eta)
    	=
    	-\sum \chi_{\alpha_{(1)}}(\rho)\cdot \chi_{\alpha_{(2)}}(\eta)
    	, 
    \]
    where
    $\rho \vee \eta = \rho\oplus\eta / \langle v_0-w_0 \rangle$ 
	and
	\[ \Delta(\alpha)=\sum \alpha_{(1)}\otimes\alpha_{(2)} \]
    is a coproduct on the cohomology of $GL_2\F_q$
    which will be described in section~\ref{coprod}.
\item (Reduction to $J_1$, Theorem~\ref{J1})
\label{thmpart:props-J1}
	If $\rho$ is a representation of an elementary abelian
	$p$-group $G$, then 
	\[
		\chi_\alpha(\rho) = \chi_\alpha(J_1(\rho)),
	\]
	where $J_1(\rho) \subset \rho$ is the subrepresentation 
	consisting of the vectors annihilated by the second 
	power of the augmentation ideal	in the group ring $\F_q G$.
\item (Dependence on $\alpha$, Remark~\ref{rk:quickprops}) 
\label{thmpart:props-dependenceonalpha}
	For a fixed representation $\rho$ of a group $G$, the 
	map 
	\[
		H^\ast(GL_2\F_q;\,\F) \longto H^\ast(G;\,\F),
		\quad
		\alpha \longmapsto \chi_\alpha(\rho)
	\]
	is $\F$-linear and commutes with the action of the 
	mod p Steenrod algebra and the operation $\Fr_*$ induced by the
	Frobenius map of the coefficient field $\F$. Moreover,
	it commutes with coefficient field extension
\[ H^\ast(-;\F) \longto H^\ast(-;\E)=H^\ast(-;\F)\otimes_\F \E, \]
i.e. $\chi_{\alpha\otimes1}(\rho)=\chi_\alpha(\rho)\otimes1$.
\end{enumerate}    
\end{thm}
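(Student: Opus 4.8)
The theorem is a summary: each of its five parts is established in the section named in its cross-reference, so the plan is to put the two (equivalent) constructions of $\chi_\alpha$ in place and then verify the properties one at a time, using whichever description is more convenient. Both constructions realize $\chi_\alpha(\rho)$, for a representation $\rho\colon G\to GL_N\F_q$, as a push--pull along a span $BG\xleftarrow{\pi}Z\xrightarrow{c}BGL_2\F_q$. Here $Z$ is obtained from the space $\Emb(\F_q^2,\F_q^N)$ of injective linear maps, on which $G$ acts on the left through $\rho$ and $GL_2\F_q$ acts freely on the right, the two actions commuting: one forms the homotopy orbit space $\Emb(\F_q^2,\F_q^N)\sslash G$ and then takes the ordinary quotient by the residual free $GL_2\F_q$-action. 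The map $c$ classifies that $GL_2\F_q$-bundle, $\pi$ is the finite covering with fibre the Grassmannian $\Gr_2(\F_q^N)$, and one sets $\chi_\alpha(\rho)=\pi_!\,c^\ast\alpha$, where $\pi_!$ is the transfer of $\pi$.

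Granting this shape, several of the assertions are essentially formal. Naturality in $G$, i.e.\ $\chi_\alpha(f^\ast\rho)=f^\ast\chi_\alpha(\rho)$, holds because the whole span pulls back along $Bf$ and the transfer is compatible with such pullbacks. Part~(v) is also formal: $c^\ast$ and $\pi_!$ are $\F$-linear; the transfer of a finite covering is induced by a stable map, hence commutes with the mod~$p$ Steenrod operations and with the coefficient Frobenius $\Fr_\ast$; and both $c^\ast$ and $\pi_!$ are defined over an arbitrary coefficient field and are compatible with scalar extension, so $\chi_{\alpha\otimes1}(\rho)=\chi_\alpha(\rho)\otimes1$. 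For part~(i), take $N=2$ and $\rho=\id_{GL_2\F_q}$: then $\Emb(\F_q^2,\F_q^2)=GL_2\F_q$, the Grassmannian fibre of $\pi$ is a point so $\pi$ is an equivalence, and $c$ is the tautological classifying map, whence $\pi_!\,c^\ast\alpha=\alpha$ (for $\alpha$ of positive degree, which is the setting throughout).

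Parts~(ii) and~(iii) are the geometric content, and each occupies its own section. For the vanishing on a direct sum $\rho\oplus\eta$ with both summands nonzero, the plan is to decompose $\Gr_2(\F_q^N)$ into its $G$-orbits --- a Mackey-type splitting of the transfer $\pi_!$ --- and to exploit the extra automorphisms of a decomposable representation, for instance the copy of $\F_q^\times$ rescaling one summand (which commutes with $G$), to show that each orbit's contribution vanishes. For the wedge-sum formula, one stratifies $\Gr_2(\rho\vee\eta)$ according to how a $2$-plane meets the two summands, identifies the strata with the corresponding data for $\rho$ and for $\eta$, and reads $\Delta(\alpha)$ off the resulting product decomposition; this is exactly the bookkeeping that the coalgebra structure of section~\ref{coprod} is designed to handle, and the overall minus sign records the orientation of the glued line $\langle v_0-w_0\rangle$.

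The main obstacle is part~(iv), the reduction to $J_1(\rho)$. For $G$ elementary abelian the ring $H^\ast(BG)$ is completely understood, but a generic $2$-plane in $\rho$ does not lie in the submodule $J_1(\rho)=\ann(I^2)$, where $I$ is the augmentation ideal of $\F_q G$, so one must prove that the $G$-orbits of these ``higher'' planes contribute nothing to $\pi_!\,c^\ast\alpha$. The plan is to filter $\rho$ by the submodules $J_k(\rho)=\ann(I^{k+1})$, analyze how $\pi_!$ detects each successive layer, and establish the required vanishing above $J_1$; this step demands genuinely new input on the representation theory of elementary abelian $p$-groups and is the technical heart on which the later nonvanishing theorems rest. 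Once all five parts are in place, the theorem is assembled from Remark~\ref{rk:quickprops}, Corollary~\ref{cor:vanishingondecomposables}, Theorem~\ref{wedgeformula}, and Theorem~\ref{J1}.
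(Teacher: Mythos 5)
Your proposal correctly recognizes that Theorem~\ref{thm:chiclassprops} is a summary theorem assembled from Remark~\ref{rk:quickprops}, Corollary~\ref{cor:vanishingondecomposables}, Theorem~\ref{wedgeformula}, and Theorem~\ref{J1}, and your treatment of parts~(i) and~(v) matches the paper's (modulo that the paper argues part~(i) from Definition~\ref{def:chiclassdef1}, where $i$ and $\pi$ both become the identity at $N=2$; your argument from Definition~\ref{def:chiclassdef2} is equivalent). However, your sketched routes to parts~(ii), (iii), and~(iv) diverge from the paper's proofs, and in two cases miss the key mechanism.

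For part~(iv) your plan---to filter $\rho$ by all the $J_k(\rho)$ and ``analyze how $\pi_!$ detects each successive layer''---significantly overcomplicates the argument and misses the observation that makes the paper's proof short. For $G$ elementary abelian, the transfer $H^\ast(H)\to H^\ast(G)$ along any \emph{proper} subgroup $H<G$ vanishes in mod-$p$ cohomology, so in the decomposition of $\pi_!$ over components of $\Gr_2(\rho)\sslash G$, only the $1$-sheeted components contribute. These are exactly the $G$-invariant $2$-planes, i.e.\ the $2$-dimensional subrepresentations, all of which lie in $J_1(\rho)$. No layer-by-layer analysis is needed. For part~(ii), your idea of exploiting the $\F_q^\times$-rescaling of one direct summand is not what the paper does, and it is unclear how to make it bite, since $|\F_q^\times|$ is prime to $p$ and invariance under such a group does not force vanishing in mod-$p$ cohomology. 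The paper instead restricts to a Sylow $p$-subgroup via the upper-triangularization principle (Remark~\ref{rk:uppertriangularization}), reducing to the case $\dim\rho^G\ge 2$, and then shows in Lemma~\ref{allfixedpoints} that contributions of strata of $\Emb(\F_q^2,\rho)$ with $\im(f)\cap\rho^G$ proper vanish because the relevant covering multiplicities are divisible by $p$. For part~(iii), the paper does not stratify $\Gr_2(\rho\vee\eta)$; it first establishes the ``affine'' push--pull formula of Proposition~\ref{prop:pointedrepformula}, which lets one pass to $\F_q\le GL_2\F_q$ and use the addition map $\mu$ of $\F_q$ directly, together with Lemma~\ref{hatclassesagree}. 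The minus sign there arises concretely from a covering of multiplicity $q-1\equiv -1\pmod p$, not from an ``orientation'' consideration. You should revisit these three parts with the paper's mechanisms in mind.
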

\noindent
In part~(\ref{thmpart:props-dependenceonalpha}), the mod $p$ Steenrod
algebra acts on cohomology with $\F$-coefficients by extension
of scalars from cohomology with $\F_p$-coefficients.
Notice that part (\ref{thmpart:props-vanishinondecomposables})
suggests a way to detect indecomposable representations: 
if $\chi_\alpha(\rho)\neq 0$
for some $\alpha$, then $\rho$ is indecomposable.

We now give our two definitions of the $\chi_\alpha$-classes.
\begin{defn}
\label{def:chiclassdef1}
For $\alpha\in H^{>0}(GL_2\F_q)$ and $N \geq 2$, let 
\[
	\chi_\alpha^{(N)} =  (i_!\circ\pi^\ast)(\alpha)
\]
in $H^d(GL_N\F_q)$, where $i$ denotes the inclusion of
the parabolic subgroup
\begin{equation}
\label{eq:pdef}
	P 
	= 	
    \begin{bmatrix}
	GL_2 
	& 
	* 
	\\ 
	& 
	GL_{N-2}
    \end{bmatrix}
    \leq 
    GL_N\F_q
 \end{equation}
into $GL_N\F_q$,
$\pi\colon P \to GL_2\F_q$ is the projection onto
the diagonal copy of $GL_2\F_q$, and $i_!$ denotes the transfer map 
induced by $i$.
By convention, set $\chi_\alpha^{(N)}=0$ when $\deg\alpha=0$.  
(This convention simplifies the statement of Theorem~\ref{wedgeformula}).

Now let $G$ be a group
and let $\rho$ be a representation of $G$ of dimension
$N \geq 2$.
We define
\[
	\chi_\alpha(\rho) = \rho^\ast (\chi^{(N)}_\alpha) \in H^* (G).
\]
On the right side, the notation $\rho$ denotes the homomorphism
$G \to GL_N\F_q$ associated to the representation, well defined up to conjugacy.
It is evident from the definition that $\chi_\alpha(\rho)$ 
only depends on the isomorphism class of $\rho$ and 
satisfies the required naturality.
We call the underlying class 
$\chi_\alpha^{(N)} = \chi_\alpha(\id_{GL_N\F_q}) \in H^d(GL_N\F_q)$
the \emph{universal $\chi_\alpha$-class for $N$-dimensional
representations}.
\end{defn}

Now we give the second definition.
If $V$ and $W$ are vector spaces, write $\Emb(V,W)$ for the set 
of linear embeddings of $V$ into $W$.
If $X$ is a $G$-space or $G$-set,
we write $X\sslash G$ for the 
topological bar construction 
$B(\pt,G,X)$ \cite[Section 7]{MayClassifying}, a model 
for the homotopy orbit space $EG \times_G X$.
In particular, $\pt\sslash G$ is a model for the classifying 
space $BG$.
\begin{defn}
\label{def:chiclassdef2}
Let $G$ be a group and let $\rho$ be a representation of $G$ 
of dimension $N \geq 2$.
Using the diagram
\begin{equation}
\label{diag:chiclassdef2}
\vcenter{\xymatrix@!0@C=8em@R=8ex{
	&
	\Emb(\F_q^2,\rho)\sslash G\times GL_2\F_q
	\ar[dl]^{\pi}_{!}
	\ar[dr]_{\tau}
	\\
	\pt\sslash G
	&&
	\pt\sslash GL_2\F_q
}}
\end{equation}
define
\[
	\chi_\alpha(\rho) = (\pi^{}_!\circ\tau^\ast)(\alpha) \in H^*(G)
\]
for $\alpha\in H^{>0} GL_2\F_q$.
Here $G \times GL_2\F_q$ acts on $\Emb(\F_q^2,\rho)$ by
\begin{equation}
\label{eq:gxgl2action} 
	(g,A) \cdot f = g\circ f  \circ A^{-1},
\end{equation}
and $\pi$ and $\tau$ are the evident projection maps.
Observe that the action of the $GL_2\F_q$-factor on 
$\Emb(\F_q^2,\rho)$ is free. Therefore, 
the map $\pi$ factors as the composite
\[
	\Emb(\F_q^2,\rho)\sslash G\times GL_2\F_q
	\xto{\ \homot\ }
	(\Emb(\F_q^2,\rho)/GL_2\F_q) \sslash G
	\longto
	\pt \sslash G,
\]
of a homotopy equivalence and 
a covering space with fibres modeled on 
the Grassmannian $\Gr_2(\rho)$, a finite set. Thus $\pi$ indeed 
admits a transfer map $\pi_!$
on cohomology.
As before, set $\chi_\alpha(\rho)=0$ if $\dim\alpha=0$.
Clearly $\chi_\alpha(\rho)$ only depends on the isomorphism
class of $\rho$, and compatibility of transfer maps with pullbacks 
implies that $\chi_\alpha$ satisfies the required naturality.
\end{defn}

\begin{prop}
\label{prop:chiclassdefequiv}
The classes $\chi_\alpha(\rho)$ of
Definitions~\ref{def:chiclassdef1} and \ref{def:chiclassdef2}
agree.
\end{prop}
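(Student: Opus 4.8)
The plan is to identify the two constructions by exhibiting a common space over which both the push and the pull maps factor. The key point is that in Definition~\ref{def:chiclassdef1}, the parabolic subgroup $P$ of equation~\eqref{eq:pdef} is precisely the stabilizer, inside $GL_N\F_q$, of the standard coordinate $2$-plane $\F_q^2 \hookrightarrow \F_q^N$ spanned by the first two basis vectors. Hence there is a $GL_N\F_q$-equivariant bijection $GL_N\F_q / P \approx \Gr_2(\F_q^N)$, and more usefully the orbit-stabilizer principle gives a homotopy equivalence
\[
	\pt \sslash P \;\homot\; \bigl(GL_N\F_q / P\bigr)\sslash GL_N\F_q \;\homot\; \Gr_2(\F_q^N)\sslash GL_N\F_q.
\]
Better still, since the $GL_2\F_q$-action on $\Emb(\F_q^2,\F_q^N)$ is free with quotient $\Gr_2(\F_q^N)$, and $P$ is the stabilizer of a point of $\Emb(\F_q^2,\F_q^N)$ under the $GL_N\F_q \times GL_2\F_q$-action (with the convention of~\eqref{eq:gxgl2action}), we get a homotopy equivalence
\[
	\pt\sslash P \;\homot\; \Emb(\F_q^2,\F_q^N)\sslash\bigl(GL_N\F_q\times GL_2\F_q\bigr).
\]
Under this equivalence, the inclusion $i\colon P \hookrightarrow GL_N\F_q$ corresponds to the projection $\pi$ of diagram~\eqref{diag:chiclassdef2} (for $\rho = \id_{GL_N\F_q}$), and the projection $\pi\colon P \to GL_2\F_q$ onto the diagonal block corresponds to the projection $\tau$. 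This is essentially a matter of unwinding the definition of the bar construction and checking that the two pairs of maps agree on the level of classifying spaces.

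First I would set up this dictionary carefully for the universal case $G = GL_N\F_q$, $\rho = \id$: verify that $P = \mathrm{Stab}(e)$ where $e$ is the standard embedding, produce the homotopy equivalence $\pt\sslash P \homot \Emb(\F_q^2,\F_q^N)\sslash(GL_N\F_q\times GL_2\F_q)$ via the standard fact that for a $G$-space $X$ with a point $x$ of stabilizer $H$ and free $G$-orbit map near $x$, $\pt\sslash H \homot (Gx)\sslash G$, applied to the $GL_2\F_q$-factor acting freely. Then I would check that $\pi^\ast$ and $i_!$ of Definition~\ref{def:chiclassdef1} match $\tau^\ast$ and $\pi_!$ of Definition~\ref{def:chiclassdef2} under this identification — the transfer maps agree because transfers are natural under pullback squares and the covering spaces in question (fibre $\Gr_2$, a finite set) are the same. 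This establishes $\chi_\alpha^{(N)}$ of the first definition equals $\chi_\alpha(\id_{GL_N\F_q})$ of the second.

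To pass from the universal case to an arbitrary representation $\rho\colon G \to GL_N\F_q$, I would use naturality of both constructions, which is already noted in the excerpt: Definition~\ref{def:chiclassdef1} sets $\chi_\alpha(\rho) = \rho^\ast\chi_\alpha^{(N)}$ by fiat, while for Definition~\ref{def:chiclassdef2} one observes that $\rho$ induces a $G$-equivariant map $\Emb(\F_q^2,\F_q^N) \to \Emb(\F_q^2,\rho)$ (a homeomorphism, in fact, once a basis of $\rho$ is chosen) fitting into a map of diagrams of the shape~\eqref{diag:chiclassdef2}, over the map $\pt\sslash G \to \pt\sslash GL_N\F_q$ induced by $\rho$. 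Compatibility of $\pi_!$ with pullback along this map then yields $\chi_\alpha(\rho) = \rho^\ast\chi_\alpha(\id_{GL_N\F_q})$ for the second definition as well, completing the proof.

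I expect the main obstacle to be bookkeeping rather than conceptual: namely, carefully matching the two transfer maps. One must confirm that the covering space $(\Emb(\F_q^2,\rho)/GL_2\F_q)\sslash G \to \pt\sslash G$ of Definition~\ref{def:chiclassdef2} is the same (up to the homotopy equivalences above) as the covering-type map underlying the transfer $i_!$ for the finite-index-like situation of $P \leq GL_N\F_q$ — here one should be slightly careful, since $P$ is not of finite index in $GL_N\F_q$, so $i_!$ is really the transfer associated to the finite covering with fibre $GL_N\F_q/P \approx \Gr_2(\F_q^N)$ at the level of classifying spaces, exactly as in~\cite{LieType}. Once the identification of spaces and maps is pinned down, that the transfers agree follows from the uniqueness/naturality properties of the transfer for finite covering spaces.
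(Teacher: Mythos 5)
Your proposal takes essentially the same route as the paper's proof: reduce to the universal case $\rho = \id_{GL_N\F_q}$ by naturality, observe that the $GL_N\F_q\times GL_2\F_q$-action on $\Emb(\F_q^2,\F_q^N)$ is transitive with stabilizer isomorphic to $P$ (embedded as a graph subgroup), and then match the two projection maps. One factual slip worth correcting: $P$ \emph{is} of finite index in $GL_N\F_q$ (both are finite groups, and $[GL_N\F_q:P]=|\Gr_2(\F_q^N)|$), so your worry in the last paragraph about the transfer is unfounded — $i_!$ is just the ordinary transfer for a finite-index subgroup, and indeed the rest of your own sentence already treats $GL_N\F_q/P\approx\Gr_2(\F_q^N)$ as finite.
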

\begin{proof}
Write temporarily $\tilde{\chi}_\alpha$ for the the classes
given by Definition~\ref{def:chiclassdef2}.
By naturality, it suffices to show that 
$\tilde{\chi}_\alpha(\id_{GL_N\F_q}) = \chi_\alpha^{(N)}$ 
for all $N\geq 2$. Observe that for $\rho=\id_{GL_N\F_q}$,
the $GL_N\F_q \times GL_2\F_q$-action on 
$\Emb(\F_q^2,\F_q^N)$ given by 
\eqref{eq:gxgl2action}
is transitive, with the  stabilizer
of the inclusion $\F_q^2 \incl \F_q^N$, $v\mapsto (v,0)$ given by
the subgroup
\[
	\left\{
		\left(
        	\begin{bmatrix}	 
        	A_{11} & A_{12}\\
        	0 & A_{22}
        	\end{bmatrix},			
			A_{11}
		\right)
		\in
		GL_N\F_q \times GL_2\F_q
    	\,\Bigg|\,
        \begin{matrix}
       	   	A_{11} \in GL_2\F_q \\
           	A_{22} \in GL_{N-2}\F_q \\
			A_{12} \in \Mat_{N,N-2}\F_q
        \end{matrix}
	\right\}	
\]
of $GL_N\F_q \times GL_2\F_q$.
Thus diagram \eqref{diag:chiclassdef2} for computing 
$\tilde{\chi}_\alpha(\id_{GL_N\F_q})$
reduces to the diagram
\[\xymatrix@!0@C=8em@R=8ex{
	&
	\pt\sslash P
	\ar[dl]_{!}
	\ar[dr]
	\\
	\pt\sslash GL_N\F_q
	&&
	\pt\sslash GL_2\F_q
}\]
where $P$ is the parabolic subgroup
\eqref{eq:pdef} of Definition~\ref{def:chiclassdef1}
and the two maps are induced by the inclusion 
$i\colon P \incl GL_N\F_q$
and the projection $\pi\colon P \to GL_2\F_q$ of 
Definition~\ref{def:chiclassdef1}. The claim follows.
\end{proof}

\begin{remark}
\label{rk:quickprops}
In the case $N=2$,
the maps $i$ and $\pi$ of Definition~\ref{def:chiclassdef1}
both reduce to the identity map of $GL_2\F_q$. 
Thus part~(\ref{thmpart:props-nontriviality}) of 
Theorem~\ref{thm:chiclassprops}
follows.
Part~(\ref{thmpart:props-dependenceonalpha})
of 
Theorem~\ref{thm:chiclassprops}
is immediate from the fact that induced maps and 
transfer maps on cohomology have the properties in question.  (Indeed, transfer maps in group cohomology commute with the Steenrod powers~\cite{Evens}, and with maps induced by coefficient group homomorphisms.)
\end{remark}

\section{Parabolic induction}\label{parabolicinduction}
The aim of this section is to show that the universal 
$\chi_\alpha$-classes $\chi_\alpha^{(n)}$
for various $n$ are related by what we call
\emph{parabolic induction maps}.
\begin{defn}
\label{def:parindmaps}
For $m \leq n$, we define
the \emph{parabolic induction map} $\Phi_{m,n}$
to be the composite
\[
	\Phi_{m,n}
	\colon 
	H^\ast(GL_m\F_q) 
	\xto{\ \pi^\ast\,} 
	H^\ast(P)
	\xto{\ i_!\ }
	H^\ast(GL_n\F_q)
\]
where $P$ is the parabolic subgroup
\[
	P
	=
   	\begin{bmatrix}	 
    	GL_m \F_q & \ast \\
    	0 & GL_{n-m}\F_q
	\end{bmatrix}
	\leq
	GL_n\F_q,
\]
$i\colon P \incl GL_n\F_q$ is the inclusion, and
$\pi \colon P \to GL_m\F_q$ is the projection onto the 
diagonal copy of $GL_m\F_q$.
\end{defn}

\begin{remark}
\label{rk:chiparindconn}
Comparing Definitions~\ref{def:chiclassdef1} and 
\ref{def:parindmaps}, we see that 
$\chi_\alpha^{(n)} = \Phi_{2,n}(\alpha)$ for 
all $n\geq 2$ and $\alpha \in H^{>0}(GL_2 \F_q)$.
\end{remark}

The parabolic induction maps 
are compatible with one another under composition:
\begin{prop}
\label{prop:parindcompat}
For all $\ell\leq m\leq n$,
\[ \Phi_{m,n}\circ\Phi_{\ell,m}=\Phi_{\ell,n}. \]
\end{prop}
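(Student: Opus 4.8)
The plan is to unwind both sides into a single push--pull diagram over a "doubly parabolic" subgroup and appeal to the standard compatibility properties of transfer maps (functoriality under composition of subgroup inclusions, and the base-change/Mackey formula when a transfer is followed by a restriction). Write $Q \leq GL_m\F_q$ for the parabolic subgroup with a $GL_\ell$-block in the top-left corner, and $P \leq GL_n\F_q$ for the parabolic with a $GL_m$-block in the top-left corner, so that $\Phi_{\ell,m} = i_!^{Q} \circ (\pi^Q)^\ast$ and $\Phi_{m,n} = i_!^{P}\circ(\pi^P)^\ast$. The composite $\Phi_{m,n}\circ\Phi_{\ell,m}$ is then $i_!^P \circ (\pi^P)^\ast \circ i_!^Q \circ (\pi^Q)^\ast$. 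First I would introduce the subgroup $R \leq GL_n\F_q$ of block-upper-triangular matrices with a $GL_\ell$, a $GL_{m-\ell}$, and a $GL_{n-m}$ block down the diagonal; this is simultaneously a parabolic subgroup of $GL_n\F_q$ and, when viewed inside $P$ via the top-left $GL_m$-block, exactly the preimage $(\pi^P)^{-1}(Q)$.

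The key step is the identity $(\pi^P)^\ast \circ i_!^Q = j_! \circ (\pi^P|_R)^\ast$, where $j\colon R \incl P$ is the inclusion and $\pi^P|_R \colon R \to Q$ is the restriction of $\pi^P$. This is precisely the base-change formula for transfers along the pullback square of groups
\[
\xymatrix{
R \ar[r]^{j} \ar[d]_{\pi^P|_R} & P \ar[d]^{\pi^P} \\
Q \ar[r]_{i^Q} & GL_m\F_q,
}
\]
which is a pullback because $Q = (\pi^P)^{-1}(Q)$ and $\pi^P$ is surjective with kernel contained in $R$; equivalently, at the level of classifying spaces it is a homotopy pullback of covering-space-type maps, and transfers satisfy the Beck--Chevalley condition there. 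Granting this, the composite becomes $i_!^P \circ j_! \circ (\pi^P|_R)^\ast \circ (\pi^Q)^\ast$. Now $i_!^P \circ j_! = (i^P\circ j)_!$ by functoriality of transfers under composition of inclusions, and $i^P \circ j$ is just the inclusion $R \incl GL_n\F_q$. Similarly, $(\pi^P|_R)^\ast \circ (\pi^Q)^\ast = (\pi^Q\circ \pi^P|_R)^\ast$, and one checks directly on matrices that $\pi^Q \circ (\pi^P|_R)$ is the projection $R \to GL_\ell\F_q$ onto the top-left block. Thus the composite equals $(R\incl GL_n)_! \circ (R \to GL_\ell)^\ast$, which is by definition $\Phi_{\ell,n}$ computed using the parabolic $R$; since any parabolic with $GL_\ell$ in the top-left block gives the same map (the unipotent radical is cohomologically trivial over $\F$, so the answer depends only on the Levi data), this is $\Phi_{\ell,n}$.

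The main obstacle is justifying the base-change identity $(\pi^P)^\ast \circ i_!^Q = j_! \circ (\pi^P|_R)^\ast$ cleanly: one must verify that the square of groups above really does induce a homotopy pullback of the relevant spaces (the map $B\pi^P$ has homotopy fibre $BU$ for the unipotent radical $U = \ker\pi^P$, and the square is cartesian precisely because $R$ contains $U$ and surjects onto $Q$), and then invoke the compatibility of the transfer with such pullbacks. This is a standard fact about finite covering transfers, but stating it in the form needed here requires a little care with which maps are covering-like and which are fibrations with $BU$-fibre; I would either cite the double-coset/Mackey formula for group cohomology transfers directly, or factor each $\pi$ as (homotopy equivalence)$\circ$(finite covering) as in Definition~\ref{def:chiclassdef2} and reduce to the elementary base-change property of covering transfers. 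Everything else is bookkeeping with block-triangular matrices.
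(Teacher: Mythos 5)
Your strategy is, in outline, the same as the paper's: introduce the triply-blocked parabolic $R = (\pi^P)^{-1}(Q) \leq GL_n\F_q$, invoke base change for the pullback square relating $R$, $P$, $Q$, and $GL_m\F_q$, and compose the two transfers into a single transfer along $R \incl GL_n\F_q$. Both the base-change step (the paper's ``diamond in the middle,'' justified there by noting the two projections have the same kernel) and the transfer-composition step appear, in a different guise, in the paper's commutative diagram. The gap is in your final sentence. Having reduced the left-hand side to $(R\incl GL_n\F_q)_!\circ(R\to GL_\ell\F_q)^\ast$, you assert that this equals $\Phi_{\ell,n}$ ``since the unipotent radical is cohomologically trivial over $\F$, so the answer depends only on the Levi data.'' That justification is wrong on both counts: the unipotent radical of a parabolic in $GL_n\F_q$ is a $p$-group, and $\F$ has characteristic $p$, so its mod-$p$ cohomology is decidedly nontrivial; and $R$ and the parabolic $P_\ell$ used to define $\Phi_{\ell,n}$ have different Levi quotients ($GL_\ell\times GL_{m-\ell}\times GL_{n-m}$ versus $GL_\ell\times GL_{n-\ell}$), so one could not conclude equality from ``same Levi data'' even if that were a valid principle.

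The correct reason, and the one the paper uses, is arithmetic rather than cohomological: $R\leq P_\ell$, the projection $R\to GL_\ell\F_q$ factors as $R\incl P_\ell\to GL_\ell\F_q$, and the index $[P_\ell:R]$ equals the number of points in the Grassmannian $\Gr_{m-\ell}(\F_q^{n-\ell})$, which is the Gaussian binomial $\binom{n-\ell}{m-\ell}_q\equiv 1\pmod q$, hence $\equiv 1\pmod p$. Therefore $(R\incl P_\ell)_!\circ(R\incl P_\ell)^\ast$ is multiplication by a unit congruent to $1$, i.e.\ the identity, and inserting this into your final push--pull replaces $R$ by $P_\ell$ and recovers $\Phi_{\ell,n}$. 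With that index computation substituted for the Levi-data claim, the proof is complete and matches the paper's.
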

\begin{proof}
Consider the diagram
\begin{equation*}
\vcenter{\xymatrix@!0@C=6.5em@R=12ex{
    &
    H^\ast\left(
        {\begin{bmatrix}
		GL_{\ell} & * 
		\\ 
		& GL_{n-\ell}
        \end{bmatrix}}
	\right)
	\ar@{<-}@/_2pc/[dddl]_(0.45){\pi^\ast}
	\ar[rr]^{\id}
    \ar[dr]^{i^\ast}
	&&
    H^\ast\left(
        {\begin{bmatrix}
		GL_{\ell} & * 
		\\ 
		& GL_{n-\ell}
        \end{bmatrix}}
	\right)
    \ar@/^2pc/[dddr]^(0.45){i_!}
    \\
    &&
	H^\ast\left(		
        {\begin{bmatrix}
		GL_\ell & * & * 
		\\
		& GL_{m-\ell} & * 
		\\
		& & GL_{n-m}
        \end{bmatrix}}
	\right)
	\ar[ur]^{i_!}
    \ar@{<-}[dl]_-{\pi^\ast}
    \ar[dr]^-{i_!}
    \\
    &
	H^\ast\left(
        {\begin{bmatrix}
        GL_\ell & * 
        \\
        & GL_{m-\ell}
        \end{bmatrix}}
	\right)
    \ar@{<-}[dl]_-{\pi^\ast}
    \ar[dr]^-{i_!}
    &&
	H^\ast\left(
        {\begin{bmatrix}
        GL_m & * 
        \\
        & GL_{n-m}
        \end{bmatrix}}
	\right)
    \ar@{<-}[dl]_-{\pi^\ast}
    \ar[dr]^-{i_!}
    \\
    H^\ast (GL_\ell)
    &&
    H^\ast (GL_m)
    &&
    H^\ast (GL_n)
}} 
\end{equation*}
where for brevity we have written $GL_a$ for $GL_a\F_q$ and 
where the various $i$'s and $\pi$'s stand for the 
evident inclusion and projection maps, respectively.
Observe that the diagram commutes: For the two cells with curved
arrows, commutativity is immediate; for the diamond in the 
middle, commutativity follows from the fact that the two 
projection maps $\pi$ involved have the same kernel;
and for the triangle on top, commutativity follows from the 
observation that the index of the subgroup
\[
	\begin{bmatrix}
	GL_\ell & * & * 
	\\
	& GL_{m-\ell} & * 
	\\
	& & GL_{n-m}
	\end{bmatrix}
	\leq
    \begin{bmatrix}
	GL_{\ell} & * 
	\\ 
	& GL_{n-\ell}
    \end{bmatrix}
\]
equals the number of points in the Grassmannian 
$\Gr_{m-\ell}(\F_q^{n-\ell})$, which is 
\[
	{n-\ell \choose m-\ell}_q 
	=  \prod_{i=1}^{m-\ell}\frac{q^{n-\ell-i+1}-1}{q^i-1}
	\equiv 1  \mod q.
\]
The claim now follows by observing that the composite
from $H^\ast(GL_\ell)$ to $H^\ast(GL_n)$ along the 
bottom of the diagram equals $\Phi_{m,n}\circ \Phi_{\ell,m}$, 
while the composite along the top of the diagram equals $\Phi_{\ell,n}$.
\end{proof}

Combining Proposition~\ref{prop:parindcompat} and
Remark~\ref{rk:chiparindconn}, 
we obtain the desired connection between the 
classes $\chi_\alpha^{(n)}$ for different values of $n$.
\begin{cor}
$\chi_\alpha^{(n)}=\Phi_{m,n}(\chi_\alpha^{(m)})$ 
for all $2\leq m \leq n$
and $\alpha \in H^\ast(GL_2 \F_q).$
\qed
\end{cor}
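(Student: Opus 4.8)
The corollary follows by a one-line bookkeeping argument, so the plan is essentially to assemble two facts already in hand. First I would recall Remark~\ref{rk:chiparindconn}, which says that for any $k \geq 2$ and $\beta \in H^{>0}(GL_2\F_q)$ one has $\chi_\beta^{(k)} = \Phi_{2,k}(\beta)$; applying this with $k = m$ and $\beta = \alpha$ gives $\chi_\alpha^{(m)} = \Phi_{2,m}(\alpha)$. Then I would apply $\Phi_{m,n}$ to both sides and invoke the composition law $\Phi_{m,n}\circ\Phi_{\ell,m} = \Phi_{\ell,n}$ from Proposition~\ref{prop:parindcompat} with $\ell = 2$, obtaining
\[
	\Phi_{m,n}(\chi_\alpha^{(m)}) = \Phi_{m,n}(\Phi_{2,m}(\alpha)) = \Phi_{2,n}(\alpha).
\]
Finally, Remark~\ref{rk:chiparindconn} again (with $k = n$) identifies the right-hand side as $\chi_\alpha^{(n)}$, yielding the claim.

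The only genuine point requiring care is the degenerate case $\deg\alpha = 0$. The statement is phrased for $\alpha \in H^\ast(GL_2\F_q)$, not merely $H^{>0}$, and both Remark~\ref{rk:chiparindconn} and Definition~\ref{def:chiclassdef1} adopt the convention $\chi_\alpha^{(k)} = 0$ whenever $\deg\alpha = 0$. So when $\deg\alpha = 0$ the identity to be proved reads $0 = \Phi_{m,n}(0) = 0$, which holds because $\Phi_{m,n}$ is a map of graded $\F$-modules (a composite of $\pi^\ast$ and the transfer $i_!$, both additive). Thus the argument splits into the trivial zero-degree case and the positive-degree case handled above, and no further work is needed.

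I do not anticipate any real obstacle here; the substance has already been discharged in Proposition~\ref{prop:parindcompat} (whose proof handles the delicate coset-counting and the double-coset compatibility of transfers). The corollary is just the translation of that multiplicativity statement into the language of the characteristic classes $\chi_\alpha^{(n)}$ via the dictionary of Remark~\ref{rk:chiparindconn}. One might optionally remark that, combined with Proposition~\ref{prop:chiclassdefequiv} and the naturality in Definition~\ref{def:chiclassdef1}, this shows that $\chi_\alpha(\rho)$ for a representation $\rho$ obtained by ``padding'' a smaller representation with a trivial summand is computed by the parabolic induction map applied to the characteristic class of the smaller representation — but that is a corollary of the corollary and need not enter the proof.
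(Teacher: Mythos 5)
Your argument is exactly the paper's: the corollary is stated as an immediate consequence of combining Remark~\ref{rk:chiparindconn} with the composition law of Proposition~\ref{prop:parindcompat}, precisely as you do. Your extra care with the degree-zero convention (both sides vanish, using additivity of $\Phi_{m,n}$) is correct and harmless.
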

\noindent
This in turn implies
\begin{cor}
\label{cor:parindclasses}
If $\chi_\alpha^{(n)}$ is nonzero or non-nilpotent, so is $\chi_\alpha^{(m)}$ for every $2 \leq m \leq n$. 
\end{cor}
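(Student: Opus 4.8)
The plan is to deduce this directly from the preceding corollary together with the transfer formula. By the previous corollary, we have $\chi_\alpha^{(n)}=\Phi_{m,n}(\chi_\alpha^{(m)})$, so it suffices to show that if $\beta \in H^\ast(GL_m\F_q)$ has the property that $\Phi_{m,n}(\beta)$ is nonzero (resp.\ non-nilpotent), then $\beta$ itself is nonzero (resp.\ non-nilpotent). Unwinding Definition~\ref{def:parindmaps}, $\Phi_{m,n}$ is $i_!\circ\pi^\ast$ where $\pi\colon P \to GL_m\F_q$ is the projection from the parabolic $P$ and $i\colon P\incl GL_n\F_q$ the inclusion. First I would observe that $\pi$ admits a section, namely the block-diagonal inclusion $s\colon GL_m\F_q \incl P$, $A \mapsto \mathrm{diag}(A, I_{n-m})$; hence $\pi^\ast$ is a split injection of rings, with retraction $s^\ast$. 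So the only real content is controlling the transfer $i_!$.

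Next I would invoke the standard double-coset / Frobenius-type identity for transfers: for the inclusion $i\colon P \incl GL_n\F_q$, the composite $i^\ast \circ i_!$ is given by a sum over double cosets $P\backslash GL_n\F_q / P$, and in particular the double coset of the identity contributes the identity map on $H^\ast(P)$. More usefully, I would precompose with the block-diagonal inclusion $j = i\circ s\colon GL_m\F_q \incl GL_n\F_q$ and consider $j^\ast\circ \Phi_{m,n} = s^\ast\circ i^\ast\circ i_!\circ\pi^\ast$. The Mackey double-coset formula expresses $i^\ast\circ i_!$ as $\sum_{PgP} (\text{conjugation and restriction terms})$. The key point is that the index $[GL_n\F_q : P]$ equals the number of points of the Grassmannian $\Gr_m(\F_q^n)$, which is $\binom{n}{m}_q \equiv 1 \pmod q$ (exactly as computed in the proof of Proposition~\ref{prop:parindcompat}); since we work with $\F_p$-coefficients, at least one double coset term — the identity one — survives mod $p$ and contributes $\mathrm{id}$ on $H^\ast(P)$, while I would need to check the other terms interact cleanly. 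A cleaner route: it is enough to show $j^\ast\circ i_!\circ\pi^\ast$ (or some iterate thereof, for the non-nilpotence statement) restricts, after a further suitable pullback, to something detecting $\beta$.

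Actually the most economical argument avoids Mackey entirely. I would argue as follows. Since $\pi^\ast$ is a split monomorphism, $\pi^\ast(\beta)\neq 0$ whenever $\beta \neq 0$, and $\pi^\ast(\beta)$ is non-nilpotent whenever $\beta$ is (as $\pi^\ast$ is a ring map with a ring-map retraction $s^\ast$, so $s^\ast((\pi^\ast\beta)^k) = \beta^k$). Thus the entire burden is: $i_!$ applied to a nonzero (resp.\ non-nilpotent) class can be zero (resp.\ nilpotent) only if\ldots — but of course transfers are \emph{not} injective in general, so this cannot work directly, and we genuinely must use the structure. The right statement is the reciprocity/projection formula combined with $[GL_n\F_q:P]\equiv 1 \bmod p$: one has $i^\ast i_!(x) \cdot [GL_n:P] $-type relations, but more precisely $i_! (i^\ast(y)\cdot x) = y \cdot i_!(x)$, and taking $x=1$ gives $i_!(i^\ast y) = y\cdot i_!(1)$; this alone does not give a retraction of $i_!$. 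So I expect the main obstacle to be exactly this: producing a left inverse (or partial detecting map) for $i_!\circ\pi^\ast$. I would resolve it by the double-coset formula for $j^\ast \circ i_!$, where $j\colon GL_m\F_q\incl GL_n\F_q$, noting that $GL_m\F_q$ acts on $GL_n\F_q/P$ with the base point $eP$ as a fixed point whose isotropy is all of $GL_m\F_q$ and whose orbit contributes (via $s^\ast$, which identifies $GL_m\F_q \cong P/U$ with $U$ the unipotent radical) precisely $\beta \mapsto \beta$ up to the stated $\equiv 1 \bmod p$ factor; the remaining orbits have strictly smaller isotropy, but since we only need $j^\ast\circ\Phi_{m,n}$ to be nonzero (resp.\ have non-nilpotent image) it suffices that the identity-coset term is not cancelled — which, because it is the unique term with full isotropy $GL_m\F_q$ and the others restrict through proper parabolics, follows by a filtration/leading-term argument on the image in $H^\ast(GL_m\F_q)$. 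For the non-nilpotence claim one applies the same analysis to powers, using that $\Phi_{m,n}$ is not a ring map but that $j^\ast$ of a power of $\chi_\alpha^{(n)}$ still has $\beta^k$ as its ``leading'' contribution. I would then conclude that $j^\ast(\chi_\alpha^{(n)})$ detects $\chi_\alpha^{(m)}$ up to lower-filtration terms, giving both statements at once. $\qed$
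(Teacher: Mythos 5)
Your opening reduction is stated in the right direction: since $\chi_\alpha^{(n)} = \Phi_{m,n}(\chi_\alpha^{(m)})$, one must show that $\Phi_{m,n}(\beta)$ nonzero (resp.\ non-nilpotent) implies $\beta$ nonzero (resp.\ non-nilpotent). But the argument you then develop addresses the \emph{converse} implication and therefore does not prove the corollary. Showing, via the double-coset formula, that $j^\ast\circ\Phi_{m,n}$ recovers $\beta$ up to lower-filtration correction terms would establish that $\Phi_{m,n}$ is (essentially) a split injection, from which one would deduce $\beta \neq 0 \Rightarrow \Phi_{m,n}(\beta)\neq 0$ and, using that $j^\ast$ is a ring homomorphism, $\Phi_{m,n}(\beta)$ nilpotent $\Rightarrow \beta$ nilpotent --- i.e.\ $\beta$ non-nilpotent $\Rightarrow \Phi_{m,n}(\beta)$ non-nilpotent. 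Both of these are the reverse of what the corollary asserts. In fact, the ``nonzero'' implication that is actually needed is trivial --- $\beta=0$ forces $\Phi_{m,n}(\beta)=0$ because $\Phi_{m,n}$ is an $\F$-linear map --- so no Mackey analysis is relevant there. The real content is the non-nilpotence claim, whose contrapositive is ``$\beta$ nilpotent $\Rightarrow \Phi_{m,n}(\beta)$ nilpotent''; this is exactly what is delicate, because $\Phi_{m,n}$ is not a ring map, and it cannot be extracted from information about $j^\ast\circ\Phi_{m,n}$: knowing $j^\ast(\Phi_{m,n}(\beta)^k)=\beta^k+(\text{lower terms})=(\text{lower terms})$ when $\beta^k=0$ says nothing about whether $\Phi_{m,n}(\beta)^k$ itself vanishes, since $j^\ast$ is not injective.

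The paper's proof uses a different and much shorter mechanism. Induced maps and transfer maps commute with the Steenrod operations, hence so does $\Phi_{m,n}$; moreover $\Phi_{m,n}$ preserves cohomological degree. Iterating the relation $P^d(x)=x^p$ for $x$ in degree $2d$ (or $\Sq^d(x)=x^2$ when $p=2$) then gives $\Phi_{m,n}(\beta)^{p^k}=\Phi_{m,n}(\beta^{p^k})$ for all $k$, so $\beta$ nilpotent forces $\Phi_{m,n}(\beta)$ nilpotent, which is the implication you need. To repair your argument you would need to engage the Steenrod structure (or otherwise show directly that $\Phi_{m,n}$ sends nilpotents to nilpotents); the Mackey/filtration machinery is aimed at the wrong implication.
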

\begin{proof}
Only the claim regarding non-nilpotence requires comment. 
Since non-nilpotence can be verified using Steenrod powers,
the claim follows by observing that the parabolic induction maps 
commute with the Steenrod operations, because induced maps and 
transfer maps~\cite{Evens} do. 
\end{proof}

In view of Corollary~\ref{cor:parindclasses},
to establish the nonvanishing of the universal classes 
$\chi_\alpha^{(m)}$
for a large range of $m$'s, one should try to find 
as high-dimensional representations 
$\rho$ as possible with $\chi_\alpha(\rho)\neq 0$.
This is our aim in section~\ref{largerep}.

\section{Vanishing on decomposables}\label{decomposables}
Our goal in this section is to prove the following result.
\begin{thm}\label{twofixedlines}
If $\rho$ is a representation which has a 
trivial subrepresentation of dimension 2, then \[ \chi_\alpha(\rho) = 0 \]
for all $\alpha$.
\end{thm}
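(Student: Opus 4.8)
The plan is to exploit the push–pull description of $\chi_\alpha$ in Definition~\ref{def:chiclassdef2}, reducing to a statement about a transfer map over a space that turns out to be "too big" in a controllable way. Suppose $\rho = \tau_2 \oplus \eta$ where $\tau_2$ is the trivial $2$-dimensional representation. The key geometric observation is that every embedding $f \colon \F_q^2 \incl \rho$ can be modified, using the trivial summand, without changing the $GL_2\F_q$-orbit in a way visible to cohomology. More precisely, consider the space $\Emb(\F_q^2, \rho)\sslash G \times GL_2\F_q$ appearing in diagram~\eqref{diag:chiclassdef2}. I would look for a free action of an auxiliary finite group — or better, a nontrivial covering space structure — on this space which is compatible with both maps $\pi$ and $\tau$, so that $\pi_! \circ \tau^\ast$ factors through a transfer along a covering of degree divisible by $p$, hence vanishes mod $p$.

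Concretely, the trivial $2$-dimensional summand gives an action of the additive group $\Mat_{2,N-2}\F_q \cong (\F_q^2)^{N-2}$, or at least of a suitably chosen subgroup, on $\Emb(\F_q^2,\rho)$ by $f \mapsto f + (\text{shear into } \eta)$: namely $f = (f_1, f_2)$ with $f_1 \colon \F_q^2 \to \tau_2$, $f_2 \colon \F_q^2 \to \eta$, and we translate $f_2$ by $B \circ f_1$ for $B \in \hom(\tau_2, \eta)$. This action commutes with the $G \times GL_2\F_q$-action of \eqref{eq:gxgl2action} (here crucially $G$ acts trivially on $\tau_2$, so $B \circ f_1$ transforms the same way as $f_2$), it is free on the locus where $f_1$ is an isomorphism, and it is invisible to $\tau$ since $\tau$ only records the $GL_2\F_q$-orbit data coming from the source. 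The first main step is therefore to decompose $\Emb(\F_q^2,\rho)$ according to the rank of $f_1$ and check that on the top stratum — where $f_1$ is invertible, which is the stratum carrying the relevant cohomology after a Quillen/stratification argument — the map $\pi$ factors through a quotient by this $p$-group action, forcing a factor of $|\text{the }p\text{-group}|$ into the transfer.

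The second step is to handle the lower-rank strata. Here I expect two options: either run the same argument stratum-by-stratum (each stratum still admits a free $p$-group action coming from the trivial summand, with the rank-$1$ and rank-$0$ loci being genuinely easier), or — more cleanly — invoke an inductive/naturality argument. For instance, since $\chi_\alpha$ is natural and vanishes on decomposables is what we are trying to prove, one cannot bootstrap from Theorem~\ref{thm:chiclassprops}(ii); but one *can* use the universal case: by naturality it suffices to prove $\chi_\alpha^{(N)}$ restricted to $P' = \begin{bmatrix} 1_2 & \ast \\ & GL_{N-2} \end{bmatrix}$ vanishes, i.e. that $(\tau_2\oplus\id)^\ast \chi_\alpha^{(N)} = 0$ in $H^\ast(GL_{N-2}\F_q \ltimes \Mat)$. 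Then the double-coset formula for $(i_! \circ \pi^\ast)$ restricted along this subgroup expresses the answer as a sum of transfers indexed by $P'$-orbits on $GL_N\F_q / P$, i.e. on $\Gr_2(\F_q^N)$; each such orbit-transfer picks up an index, and I would argue each index is divisible by $p$ using that the orbit of a $2$-plane $V$ under $P'$ has size controlled by $\dim(V \cap \F_q^2)$, with the fixed points (the $2$-planes containing or contained in the trivial plane) contributing terms that cancel in characteristic $p$ or are themselves $p$-divisible.

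The hard part, I expect, is the bookkeeping in the double-coset / stratification argument: verifying that *every* orbit contributes a $p$-divisible index (the orbit of the trivial plane $\F_q^2$ itself is a single fixed point, so its index is $1$ — this is the danger case, and it must be killed instead by the $\pi^\ast\alpha$ factor restricting to something in the image of a transfer that dies, or by a more careful choice of subgroup such as $\begin{bmatrix} 1 & \ast & \ast \\ & 1 & \ast \\ & & GL_{N-2}\end{bmatrix}$ exploiting *both* trivial lines). Getting this fixed-point term to vanish is where the hypothesis "trivial subrepresentation of dimension exactly $2$" (as opposed to $1$) is essential, and pinning down the cleanest mechanism for it is the crux of the proof.
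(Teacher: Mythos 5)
Your proposal heads in roughly the same direction as the paper---both use Definition~\ref{def:chiclassdef2}, decompose $\Emb(\F_q^2,\rho)$ into $G\times GL_2\F_q$-invariant pieces, and kill each piece by exhibiting an auxiliary free action giving $p$-divisible covering multiplicities---but the concrete mechanisms you propose have genuine gaps, and you correctly identify the crux without resolving it.

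First, the shear action you propose, $s_B(f_1,f_2)=(f_1, f_2+Bf_1)$ for $B\in\hom(\tau_2,\eta)$, does \emph{not} commute with the $G$-action in general. Checking: $g\cdot s_B(f)=(f_1,\,gf_2+gBf_1)$ while $s_B(g\cdot f)=(f_1,\,gf_2+Bf_1)$, so commutativity requires $Bf_1\in\eta^G$ for all $f_1$, i.e.\ $B\in\hom(\tau_2,\eta^G)$. If $\eta^G=0$ (perfectly possible, e.g.\ $G=\F_p\times C_{q-1}$ acting trivially on $\tau_2$ and faithfully on $\eta$) the usable shear group is trivial and your $p$-divisibility argument has nothing to work with. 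Second, you assume $\rho=\tau_2\oplus\eta$ as a $G$-representation, but the theorem only hypothesizes a trivial $2$-dimensional \emph{sub}representation (equivalently $\dim\rho^G\geq 2$), which need not split off. Third, you flag the stratum where $\im(f)$ equals the trivial plane as the ``danger case'' and consider two mechanisms for it, but carry neither through.

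What the paper actually does avoids all three problems by indexing the decomposition not by the rank of $\pr_{\tau_2}\circ f$ (which requires a chosen splitting) but by the subspace $V=\im(f)\cap\rho^G\leq\rho^G$. This gives a clean dichotomy (Lemma~\ref{allfixedpoints}). When $\dim V=2$: the $G$-action on $\Emb^{(V)}(\F_q^2,\rho)$ is trivial, so the map $\tau_V$ to $\pt\sslash GL_2\F_q$ factors through the contractible space $\mathrm{Iso}(\F_q^2,V)\sslash GL_2\F_q$; this kills the term you were worried about, and it is exactly here that ``dimension $\geq 2$'' rather than ``$\geq 1$'' is needed (so that every $V$ with $\dim V<2$ is a \emph{proper} subspace of $\rho^G$). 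When $V<\rho^G$ is proper: choose a $G$-fixed line $L\leq\rho^G$ with $L\not\leq V$ and project along $r\colon\rho\to\rho/L$. The fibers of $\Emb^{(V)}(\F_q^2,\rho)\to\hom(\F_q^2,\rho/L)$ carry a free action of $\hom(\F_q^2/f^{-1}rV,\,L)$, a group of order $q^{2-\dim V}$; since $L$ is fixed and we are translating in the $L$-direction, this action commutes with $G$ automatically (no hypothesis on $\eta^G$ needed), and its order is divisible by $p$ precisely because $\dim V<2$. The resulting covering multiplicities are divisible by $p$, so the corresponding transfer composite vanishes. Your instinct that translation in a trivial direction should give a free $p$-group action was right; the key fix is to translate along a fixed \emph{line} inside $\rho^G$ not meeting $V$, rather than along a vector-space complement to $\tau_2$.
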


\begin{remark}\label{rk:uppertriangularization}
Representations over $\F_q$ (or any field of characteristic $p$) have an ``upper-triangularization principle'' with respect to mod-$p$ cohomology.
Indeed, if $P\leq G$ is a Sylow $p$-subgroup, then restriction from $G$ to $P$ in cohomology is injective.  But, the restriction of any representation $\rho$ of $G$ to $P$ is unipotent upper-triangular with respect to some basis (because the image of $P$ in $GL(\rho)\iso GL_N\F_q$ is subconjugate to the $p$-Sylow subgroup of $GL_N\F_q$, consisting of the unipotent upper-triangular matrices).  In particular, the action of $P$ on $\rho$ has a fixed line. 
\end{remark}
Hence, Theorem~\ref{twofixedlines} implies:
\begin{cor}\label{cor:vanishingondecomposables}
If $\rho$ and $\eta$ are nonzero representations, then
\[
	\chi_\alpha(\rho\oplus\eta)=0
\]
for all $\alpha$. \qed
\end{cor}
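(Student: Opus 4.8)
The plan is to reduce to Theorem~\ref{twofixedlines} by restricting to a Sylow $p$-subgroup, following the upper-triangularization principle of Remark~\ref{rk:uppertriangularization}. As a preliminary step I would reduce to the case of a finite group: the homomorphism $\rho\oplus\eta\colon G \to GL(\rho\oplus\eta)$ factors through its (finite) image $\bar G$, which still acts preserving the two summands, so by naturality of $\chi_\alpha$ we have $\chi_\alpha(\rho\oplus\eta) = f^\ast\chi_\alpha(\bar\rho\oplus\bar\eta)$ for the quotient map $f\colon G \to \bar G$, where $\bar\rho,\bar\eta$ are the representations of $\bar G$ induced by $\rho,\eta$. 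Hence it suffices to treat finite $G$.

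Now let $P \leq G$ be a Sylow $p$-subgroup. By naturality of the characteristic classes and the fact that restriction commutes with direct sums,
\[
	\chi_\alpha(\rho\oplus\eta)|_P = \chi_\alpha\big((\rho|_P) \oplus (\eta|_P)\big).
\]
Since $P$ is a $p$-group and $\F_q$ has characteristic $p$, the action of $P$ on each of the nonzero spaces $\rho$ and $\eta$ has a fixed line (the fixed-line statement recalled in Remark~\ref{rk:uppertriangularization}). Choosing a nonzero $P$-fixed vector in $\rho$ and one in $\eta$ exhibits a trivial $2$-dimensional subrepresentation of $(\rho|_P)\oplus(\eta|_P)$, so Theorem~\ref{twofixedlines} applies and yields $\chi_\alpha\big((\rho|_P)\oplus(\eta|_P)\big)=0$. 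Therefore $\chi_\alpha(\rho\oplus\eta)|_P = 0$.

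Finally I would invoke the standard transfer argument: with coefficients in a field $\F$ of characteristic $p$, the restriction map $H^\ast(G) \to H^\ast(P)$ to a Sylow $p$-subgroup is (split) injective, since its composite with the transfer is multiplication by the index $[G:P]$, which is a unit in $\F$. Combining this with the previous paragraph gives $\chi_\alpha(\rho\oplus\eta)=0$ for all $\alpha$. There is no real obstacle in this argument; the only point requiring a word of care is the passage to a finite group in the first step, needed merely so that the notion of a Sylow $p$-subgroup is available.
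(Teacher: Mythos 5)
Your argument is correct and is essentially the paper's own: the corollary is deduced from Theorem~\ref{twofixedlines} exactly via the upper-triangularization principle of Remark~\ref{rk:uppertriangularization} (restrict to a Sylow $p$-subgroup, where each nonzero summand acquires a fixed line, and use injectivity of restriction on mod-$p$ cohomology). Your preliminary reduction to a finite quotient is a sensible bit of extra care that the paper leaves implicit.
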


\begin{remark}
For a representation of a $p$-group $P$ over a field of characteristic $p$, the condition of having just one-dimensional fixed-point space is quite restrictive.  Indeed, such representations are precisely the subrepresentations of the regular representation of $P$.
\end{remark}

\begin{remark}
Taking $\eta$ to be the trivial 1-dimensional representation shows
that the universal classes 
$\chi_\alpha^{(n)}\in H^*(GL_n\F_q)$ vanish upon restriction to $H^*(GL_{n-1}\F_q)$.  
That is, they are annihilated by the stabilization maps.
\end{remark}

\begin{remark}
\label{rk:primitives}
Corollary~\ref{cor:vanishingondecomposables} implies that the universal classes $\chi_\alpha^{(n)}$ are primitives in the bialgebra
\[ \bigoplus_{n=0}^\infty H^*(GL_n\F_q) \]
where the coproduct is induced by the block-sum homomorphisms
\[ GL_a\F_q\times GL_b\F_q\to GL_{a+b}\F_q. \]
\end{remark}

We now turn to the proof of Theorem~\ref{twofixedlines}.  
Let $V\leq\rho^G$ be a subspace of the set of fixed-points in $\rho$.  Define 
\[ \Emb^{(V)}(\F_q^2,\rho)=\set{f\in\Emb(\F_q^2,\rho)}{\im(f)\cap\rho^G=V}, \]
the set of embeddings whose image contains precisely the fixed-points in $V$ 
(nonempty only for $\dim V\leq2$).  
This decomposition
\[ \Emb(\F_q^2,\rho)=\coprod_{V\leq\rho^G} \Emb^{(V)}(\F_q^2,\rho) \]
as $(G\times GL_2\F_q)$-sets yields a disjoint union decomposition of the space
$\Emb(\F_q^2,\rho)\sslash G\times GL_2\F_q$.
Consequently, Definition~\ref{def:chiclassdef2} splits up as a sum
\[ \chi_\alpha(\rho)=\sum_{V\leq\rho^G} ((\pi_V)_!\circ\tau_V^\ast )(\alpha), \]
where $\pi_V$ and $\tau_V$ are the evident projections
fitting into the diagram
\[\xymatrix@!0@C=8em@R=8ex{
	&
	\Emb^{(V)}(\F_q^2,\rho)\sslash G\times GL_2\F_q
	\ar[dl]^{\pi_V}_{!}
	\ar[dr]_{\tau_V}
	\\
	\pt\sslash G
	&&
	\pt\sslash GL_2\F_q
}\]
The following lemma says that, in calculating $\chi_\alpha(\rho)$, one needs 
to consider only those embeddings whose image contains all of the fixed points,
and also contains nonfixed points.
\begin{lemma}\label{allfixedpoints}
Let $\rho$ be a representation of $G$, and $V\leq\rho^G$ a subspace. Then 
\[ (\pi_V)_!\circ\tau_V^\ast=0 \]
in positive degrees
if either
\begin{enumerate}
\item $\dim V=2$, or
\item $V<\rho^G$ is a proper subspace.
\end{enumerate}
\end{lemma}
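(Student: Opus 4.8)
The plan is to separate the two cases. Case~(1) I would dispatch by a direct identification of the relevant homotopy orbit space; for case~(2), since $\dim V=2$ is already covered by case~(1), I may assume $\dim V\le 1$, and I would then run a transfer argument built on a large translation action. For case~(1): if $\Emb^{(V)}(\F_q^2,\rho)$ is empty there is nothing to prove, so suppose not. When $\dim V=2$ every $f\in\Emb^{(V)}(\F_q^2,\rho)$ has image exactly $V$, so $\Emb^{(V)}(\F_q^2,\rho)=\mathrm{Iso}(\F_q^2,V)$; since $G$ fixes $V$ pointwise this is a transitive $(G\times GL_2\F_q)$-set with stabilizer $G\times\{1\}$. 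Hence $\Emb^{(V)}(\F_q^2,\rho)\sslash(G\times GL_2\F_q)\simeq\pt\sslash G$, under which $\pi_V$ is an equivalence while $\tau_V$ corresponds to the map induced by the composite $G\times\{1\}\incl G\times GL_2\F_q\to GL_2\F_q$, a trivial homomorphism. So $\tau_V^*$ vanishes in positive degrees, and therefore so does $(\pi_V)_!\circ\tau_V^*$.

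For case~(2) the mechanism is uniform. In each sub-case ($\dim V=0$ and $\dim V=1$) I would exhibit a nontrivial finite $p$-group $\Pi$ acting on $\Emb^{(V)}(\F_q^2,\rho)$ — or, when $\dim V=1$, on a reduction of it over a point-stabilizer $Q\le GL_2\F_q$ described below — freely, commuting with the $G$-action, and normalized by $GL_2\F_q$ (resp.\ by $Q$), so that the larger group $G\times(GL_2\F_q\rtimes\Pi)$ (resp.\ $G\times(Q\rtimes\Pi)$) acts with $\Pi$ normal. Writing $E_V$ for the homotopy orbit space $\Emb^{(V)}(\F_q^2,\rho)\sslash(G\times GL_2\F_q)$ of the lemma and $E_V'$ for its analogue with $\Pi$ adjoined, the forgetful map $p\colon E_V\to E_V'$ is a finite covering of constant degree $|\Pi|$, which is a positive power of $q$ and hence $\equiv 0\pmod p$; moreover $\pi_V$ factors through $p$ via the projection to $G$, and $\tau_V$ factors through $p$ via the projection $G\times(GL_2\F_q\rtimes\Pi)\to GL_2\F_q$ killing $\Pi$. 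Transitivity of transfers of finite coverings gives $(\pi_V)_!=(\pi_V')_!\circ p_!$, and together with $\tau_V=\tau_V'\circ p$ and $p_!\,p^*=(\deg p)\cdot\id=|\Pi|$ this yields $(\pi_V)_!\tau_V^*(\alpha)=|\Pi|\cdot(\pi_V')_!(\tau_V')^*(\alpha)=0$ in mod-$p$ cohomology.

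It remains to produce $\Pi$. When $\dim V=0$, take $\Pi=\hom(\F_q^2,\rho^G)$ acting by $\beta\cdot f=f+\beta$: since $\dim V=0$ forces $\im f\cap\rho^G=0$, translation preserves $\Emb^{(0)}(\F_q^2,\rho)$; the action is free, commutes with $G$, and $GL_2\F_q$ normalizes it by precomposition. Here $|\Pi|=q^{2\dim\rho^G}$ with $\dim\rho^G\ge 1$, and in fact $E_V'\simeq\Emb(\F_q^2,\rho/\rho^G)\sslash(G\times GL_2\F_q)$, so this sub-case amounts to the identity $(\pi_V)_!\tau_V^*(\alpha)=q^{2\dim\rho^G}\cdot\chi_\alpha(\rho/\rho^G)=0$. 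When $\dim V=1$, say $V=\langle v\rangle$, the group $\hom(\F_q^2,\rho^G)$ no longer preserves $\Emb^{(V)}(\F_q^2,\rho)$, since translating can move $v$ out of the image; the fix uses a canonical marked point. Each $f\in\Emb^{(V)}(\F_q^2,\rho)$ has a unique nonzero preimage $x_f$ of $v$, and $f\mapsto x_f$ is $G$-invariant and $GL_2\F_q$-equivariant into $\F_q^2\setminus\{0\}$, on which $GL_2\F_q$ acts transitively with point-stabilizer $Q=\mathrm{Stab}_{GL_2\F_q}(e_1)$. Replacing $GL_2\F_q$ by $Q$ and $\Emb^{(V)}(\F_q^2,\rho)$ by the $Q$-stable subset $\{f:x_f=e_1\}$, and fixing a vector-space complement $W$ of $\langle v\rangle$ in $\rho^G$ — nonzero, since $V\subsetneq\rho^G$ with $\dim V=1$ forces $\dim\rho^G\ge 2$ — the group $\Pi=\hom(\F_q^2/\langle e_1\rangle,\,W)$ acting by $\beta\cdot f=f+\beta$ \emph{does} preserve that subset, is free, commutes with $G$, and is normalized by $Q$; here $|\Pi|=q^{\dim\rho^G-1}\ge q$. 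The step I expect to be the real obstacle is precisely this $\dim V=1$ case: checking that the smaller translation group genuinely preserves the condition $\im f\cap\rho^G=\langle v\rangle$, and that passing to the marked-point reduction over $Q$ leaves the factorizations of $\pi_V$ and $\tau_V$ — hence the transfer computation — intact.
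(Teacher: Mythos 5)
Your argument is correct. Case~(1) matches the paper's up to phrasing: both show that $\tau_V$ is null-homotopic --- the paper factors $\tau_V$ through the contractible space $\mathrm{Iso}(\F_q^2,V)\sslash GL_2\F_q$, while you identify the entire homotopy orbit space as $\pt\sslash G$ and observe that $\tau_V$ is then a constant map. Case~(2) rests on the same mechanism as the paper --- producing a finite covering of degree divisible by $p$ through which the push--pull factors, so that the relevant transfer-times-pullback vanishes --- but the covering is built differently. The paper picks a fixed line $L\leq\rho^G$ with $L\not\leq V$ and considers the map $\Emb^{(V)}(\F_q^2,\rho)\to\hom(\F_q^2,\rho/L)$ onto its image $X$: the fiber over $f\in X$ is a torsor for $\hom(\F_q^2/f^{-1}rV,\,L)$, of order $q^{2-\dim V}$, which treats $\dim V\in\{0,1\}$ uniformly with no case split. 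Your version instead constructs a global translation group $\Pi$ acting freely, which is clean for $\dim V=0$ (with $\Pi=\hom(\F_q^2,\rho^G)$) but, as you anticipated, forces the marked-point Shapiro reduction to the stabilizer $Q$ of $e_1$ when $\dim V=1$, since unconstrained translation by $\rho^G$ would not preserve the condition $\im f\cap\rho^G=V$. Both routes are sound; the paper's choice of projecting away a line $L$ outside $V$ is exactly what lets it avoid both your case split and the Shapiro reduction.
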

\begin{proof}
First suppose $\dim V=2$.  Then the $G$-action on 
$\Emb^{(V)}(\F_q^2,\rho)$ is trivial, 
so $\tau_V$ factors through 
\[ \Emb^{(V)}(\F_q^2,\rho)\sslash GL_2\F_q=
\textrm{Iso}(\F_q^2,V)\sslash GL_2\F_q, \]
which is contractible.

Now we turn to the case in which $V\leq\rho^G$ is proper. 
If $\dim(V) > 2$, we are done since in this case 
$\Emb^{(V)} (\F_q^2,\rho)$
is empty. In light of the previous case, we may therefore 
assume that $\dim V<2$.
Pick some fixed line $L\leq\rho^G$ which is not contained in $V$.
Let $X$ be the image of $\Emb^{(V)} (\F_q^2,\rho)$ under the map
\[ \hom(\F_q^2,\rho)\longto\hom(\F_q^2,\rho/L) \]
induced by the quotient homomorphism $r\colon \rho\to\rho/L$.

Observe that the fibre over $f\in X$ of the resulting 
surjection 
\[ 
	\Emb^{(V)} (\F_q^2,\rho)\longto X 
\]
has a free $\hom(\F_q^2/f\inv rV, L)$-action.
Consequently, the cardinalities of the fibres 
are divisible by $q^{2-\dim(V)}$, and hence by $p$.
The same is true after passing to homotopy orbits, so that in the commutative diagram
\[\xymatrix@!0@C=8em@R=8ex{
	&
	X \sslash G\times GL_2\F_q
	\ar@/_3pc/[ddl]_(0.7){\pi'_V}
	\ar@/^3pc/[ddr]^(0.7){\tau'_V}
	\\
	&
	\Emb^{(V)} (\F_q^2,\rho) \sslash G\times GL_2\F_q
	\ar[u]_\phi
	\ar[dl]^{\pi_V}
	\ar[dr]_{\tau_V}
	\\
	\pt\sslash G
	&&
	\pt\sslash GL_2\F_q,
}\]
(where $\pi'_V$ and $\tau'_V$ are the evident projections),
the map $\phi$ is a union of covering spaces
whose multiplicities are divisible by $p$.
It follows that
\[ (\pi_V)_!\circ\tau_V^\ast 
= (\pi'_V)_!\circ(\phi_!\circ \phi^\ast)\circ (\tau'_V)^\ast 
= (\pi'_V)_!\circ 0 \circ (\tau'_V)^\ast
= 0.\qedhere \]
\end{proof}

Given a subspace $V \leq \rho^G$, let us
write $\Emb^{(\geq V)}(\F_q^2,\rho)$ for the set of 
embeddings whose image contains $V$. For later reference, we 
record the following corollary of Lemma~\ref{allfixedpoints}.
\begin{cor}
\label{cor:embatleastv}
Let $\rho$ be a representation of $G$ and let $V \leq \rho^G$
be a subspace.
For all $\alpha \in H^{>0}(GL_2\F_q)$ we have 
\[
	\chi_\alpha(\rho) = (\pi_! \circ \tau^\ast)(\alpha)
\]
where $\pi$ and $\tau$ are the evident projections fitting into
the diagram
\[
\pushQED{\qed} 
\begin{gathered}[b]
\xymatrix@!0@C=8em@R=8ex{
	&
	\Emb^{(\geq V)}(\F_q^2,\rho)\sslash G\times GL_2\F_q
	\ar[dl]^{\pi}_{!}
	\ar[dr]_{\tau}
	\\
	\pt\sslash G
	&&
	\pt\sslash GL_2\F_q 
}
\\[-\dp\strutbox]
\end{gathered}
\qedhere
\popQED
\]
\end{cor}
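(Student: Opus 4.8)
The plan is to deduce Corollary~\ref{cor:embatleastv} from Lemma~\ref{allfixedpoints} by the same disjoint-union-of-spaces bookkeeping used to prove Theorem~\ref{twofixedlines}. Recall that in Definition~\ref{def:chiclassdef2} the space $\Emb(\F_q^2,\rho)\sslash G\times GL_2\F_q$ splits as the disjoint union of the pieces $\Emb^{(W)}(\F_q^2,\rho)\sslash G\times GL_2\F_q$ over subspaces $W\leq\rho^G$, and hence
\[ \chi_\alpha(\rho)=\sum_{W\leq\rho^G}((\pi_W)_!\circ\tau_W^\ast)(\alpha). \]
The first step is to observe that the space $\Emb^{(\geq V)}(\F_q^2,\rho)\sslash G\times GL_2\F_q$ is itself the disjoint union of exactly those pieces indexed by subspaces $W\leq\rho^G$ with $W\geq V$: an embedding $f$ has image containing $V$ if and only if the fixed subspace it meets, namely $\im(f)\cap\rho^G$, contains $V$. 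This decomposition is one of $(G\times GL_2\F_q)$-sets, so it passes to homotopy orbits, and since the transfer map for a covering is additive over disjoint unions of the base (equivalently, the transfer for $\pi$ restricted to each open-and-closed piece agrees with $(\pi_W)_!$), we get
\[ (\pi_!\circ\tau^\ast)(\alpha)=\sum_{V\leq W\leq\rho^G}((\pi_W)_!\circ\tau_W^\ast)(\alpha). \]

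The second step is to compare this with the full sum for $\chi_\alpha(\rho)$ and show the missing terms vanish. The terms indexed by $W$ with $W\not\geq V$ are precisely those for which $W$ is a \emph{proper} subspace of $\rho^G$ (any $W$ containing all of $\rho^G$ certainly contains $V$), together with any $W$ of dimension $2$; but both of these classes of terms vanish in positive degrees by Lemma~\ref{allfixedpoints}, parts~(2) and~(1) respectively. Hence for $\alpha\in H^{>0}(GL_2\F_q)$,
\[ \chi_\alpha(\rho)=\sum_{W\leq\rho^G}((\pi_W)_!\circ\tau_W^\ast)(\alpha)=\sum_{V\leq W\leq\rho^G}((\pi_W)_!\circ\tau_W^\ast)(\alpha)=(\pi_!\circ\tau^\ast)(\alpha), \]
which is exactly the assertion. (One should also handle the trivial case $\dim\alpha=0$ separately, where both sides are $0$ by convention; and note that when $\dim V=2$ both sides reduce to a single term which vanishes, consistent with the statement.)

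The only genuinely careful point — and thus the main thing to get right rather than a real obstacle — is the additivity of the transfer map over the disjoint-union decomposition: one must check that the transfer $\pi_!$ built from the covering-space factorization of $\pi$ (through $(\Emb^{(\geq V)}(\F_q^2,\rho)/GL_2\F_q)\sslash G$) restricts on each connected component, or more precisely on each open-and-closed union of components corresponding to a fixed $W$, to the transfer $(\pi_W)_!$. This is the standard fact that the transfer of a finite covering decomposes as a sum over a clopen partition of the total space; since the $GL_2\F_q$-action is free throughout, the relevant maps are honest finite coverings after the homotopy equivalence, and no subtlety beyond this bookkeeping arises. Everything else is the combinatorics of which subspaces $W\leq\rho^G$ satisfy $W\geq V$, which is immediate.
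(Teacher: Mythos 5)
Your proof is correct and matches the paper's intended (implicit) argument: decompose $\Emb(\F_q^2,\rho)$ into the pieces $\Emb^{(W)}(\F_q^2,\rho)$, identify $\Emb^{(\geq V)}(\F_q^2,\rho)$ as the union of those with $W\geq V$, and invoke Lemma~\ref{allfixedpoints} to show the omitted terms vanish. One small slip in wording: the omitted indices $W\not\geq V$ are not ``precisely'' the proper subspaces together with the $2$-dimensional ones. In fact $W\not\geq V$ already forces $W\neq\rho^G$ (since $V\leq\rho^G$), so every omitted $W$ is proper and part~(2) of the lemma alone handles them; part~(1) is not needed. Since you only require a containment rather than an equality, the conclusion is unaffected.
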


\begin{proof}[Proof of Theorem~\ref{twofixedlines}]
Since we are assuming $\dim\rho^G\geq2$, any $V\leq\rho^G$ satisfying $\dim V<2$ is proper.  So by the lemma, $(\pi_V)_!\circ\tau_V^\ast=0$ for every $V$, 
showing $\chi_\alpha(\rho)=0$.
\end{proof}

\section{The cohomology of $GL_2\F_q$}\label{cohofGL2}
Because our family of characteristic classes is indexed on the modular cohomology of $GL_2\F_q$, 
we now describe the cohomology for the reader's convenience.
The result described in this section is 
well-known 
\cite{Aguade,QuillenK,FriedlanderParshall}~\cite[ch.~1]{Ddissertation}.
To make the description more explicit, we assume in this section
that the coefficient field $\F$ for cohomology is an extension of $\F_q$.

The $p$-Sylow subgroup of $GL_2\F_q$ is
\[ \F_q=\begin{bmatrix}1&\ast\\&1\end{bmatrix}, \]
the additive group of the field $\F_q$, which is elementary abelian.  Hence~\cite[Thm.~II.6.8]{A-M} restriction gives an isomorphism
\[ H^*(GL_2\F_q)\iso H^*(\F_q)^{\F_q^\times} \]
with the invariants of the multiplicative group 
\[ \F_q^\times\iso C_{q-1} \] 
acting on the cohomology ring of the additive group.

\begin{remark}\label{aff1}
The same remarks apply with $GL_2\F_q$ replaced by 
its subgroup
\[ \Aff_1\F_q=\begin{bmatrix}1&\ast\\0&\ast\end{bmatrix}. \] 
Hence
the restriction in mod $p$ cohomology from $GL_2\F_q$ to $\Aff_1\F_q$
is an isomorphism.  Since the index is a unit modulo $p$, the transfer
$H^*(\Aff_1\F_q)\to H^*(GL_2\F_q)$ is an isomorphism as well.
\end{remark}

By the assumption that the coefficient field $\F$ is an extension 
of $\F_q$, the action of $\F_q^\times$ on $H^\ast(\F_q)$
diagonalizes, as all $(q-1)$-th roots are present and distinct. 
That is, there is a weight-space decomposition
\begin{equation}
\label{eq:weightspacedecomp}
H^*(\F_q) = \bigoplus_{k\in\bbZ/(q-1)} E_k, 
\end{equation}
with $E_k$ consisting of those elements on which $c\in\F_q^\times$ acts as multiplication by $c^k$.
In particular, restriction gives an isomorphism
\[ H^*(GL_2\F_q)\iso H^*(\F_q)^{\F_q^\times}=E_0. \]

Now, one can find canonical (up to scalar multiple) generators for the cohomology:
\[ H^*(\F_{p^r}) = \begin{cases}
\F[y_0,\dots, y_{r-1}] &\mbox{if $p=2$},\\
\F\langle x_0,\dots,x_{r-1}\rangle \\
\qquad\otimes
\F[y_0,\dots,y_{r-1}] &\mbox{if $p$ odd},
\end{cases} \]
where \[ x_k,y_k\in E_{p^k} \]
and $\deg x_i =1$, $\deg y_i =2$ (for $p=2$, $\deg y_i = 1$).
The angle braces indicate an exterior algebra.
The monomials
\[ x^Ay^B = \prod_{k=0}^{r-1}x_k^{a_k}y_k^{b_k} \]
for \[ A=(a_0,\dots,a_{r-1})\in\{0,1\}^r \+ B=(b_0,\dots,b_{r-1})\in\bbN^r \]
now form an additive basis for the cohomology, consisting of eigenvectors for the action of the multiplicative group.

From this, we obtain the following description of the invariants:
\begin{lemma}\label{GL2}
Under the assumption that $\F$ is an extension of $\F_q$,
the restriction \[ H^*(GL_2\F_q;\,\F)\longto H^*(\F_q;\,\F) \]
is an isomorphism onto the subspace (and subring) generated by the monomials
\[ x^Ay^B \]
with the property that
\[
	(q-1)\mid\sum_{k=0}^{r-1}p^k\left(a_k+b_k\right),
\]
where $q=p^r$.
(In the case of odd $p$; for $p=2$, of course there are no exterior terms.)
\qed
\end{lemma}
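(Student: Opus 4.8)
The plan is to reduce the statement to a concrete computation in the cohomology of the elementary abelian group $\F_q$ together with its $\F_q^\times$-action, which has already been set up in this section. First I would recall that restriction $H^\ast(GL_2\F_q;\F)\to H^\ast(\F_q;\F)$ is an isomorphism onto the invariant subring $H^\ast(\F_q;\F)^{\F_q^\times} = E_0$, by the $A$--M result cited above (the $p$-Sylow subgroup is $\F_q$ with Weyl group $\F_q^\times$). The map is a ring homomorphism, so its image is automatically a subring; the only real content is identifying $E_0$ as the span of the stated monomials. Since the image is all of $E_0$ and $E_0$ is spanned by those basis monomials $x^Ay^B$ that happen to lie in weight $0$, the task is simply to compute the weight of $x^Ay^B$ under the $\F_q^\times$-action and determine when it vanishes in $\bbZ/(q-1)$.

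Next I would carry out that weight computation. By construction $x_k$ and $y_k$ lie in the weight space $E_{p^k}$, meaning $c\in\F_q^\times$ acts on each as multiplication by $c^{p^k}$. The $\F_q^\times$-action is by ring automorphisms (it is induced by a group action on $\F_q$), so it is multiplicative on products: $c$ acts on the monomial $x^Ay^B = \prod_k x_k^{a_k} y_k^{b_k}$ by the scalar
\[
	\prod_{k=0}^{r-1} \left(c^{p^k}\right)^{a_k+b_k}
	=
	c^{\,\sum_{k=0}^{r-1} p^k (a_k+b_k)}.
\]
Thus $x^Ay^B \in E_m$ where $m \equiv \sum_{k=0}^{r-1} p^k(a_k+b_k) \pmod{q-1}$, using that $c^{q-1}=1$ for all $c\in\F_q^\times$. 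Therefore $x^Ay^B \in E_0$ precisely when $(q-1) \mid \sum_{k=0}^{r-1} p^k(a_k+b_k)$, which is exactly the stated divisibility condition.

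Finally I would assemble these: the monomials $x^Ay^B$ form an additive basis of $H^\ast(\F_q;\F)$ consisting of $\F_q^\times$-eigenvectors (this was already noted just above the lemma), so $E_0 = H^\ast(\F_q;\F)^{\F_q^\times}$ is exactly the span of those $x^Ay^B$ satisfying the divisibility condition. Since restriction is an isomorphism onto $E_0$ and is a ring map, this span is simultaneously the image as a subspace and as a subring, which is precisely the assertion of the lemma. I do not anticipate a genuine obstacle here — the statement is essentially a bookkeeping consequence of the weight-space decomposition \eqref{eq:weightspacedecomp} and the fact that the $x_k, y_k$ were chosen to sit in the weight spaces $E_{p^k}$; the only point requiring a word of care is that the $\F_q^\times$-action is multiplicative, so that the weight of a monomial is the sum of the weights of its factors, which is immediate since the action is by ring automorphisms.
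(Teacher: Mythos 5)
Your proof is correct and follows exactly the route the paper has in mind: the lemma is stated with \qed because it is a direct consequence of the weight-space decomposition \eqref{eq:weightspacedecomp} and the placement $x_k, y_k \in E_{p^k}$, and your argument simply writes out that the weight of $x^A y^B$ is $\sum_k p^k(a_k+b_k) \bmod (q-1)$, so the monomial lies in $E_0$ iff the stated divisibility holds.
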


\section{The coproduct on the cohomology of $GL_2\F_q$}\label{coprod}
Because $\F_q$ is an abelian group, its addition map 
$\mu \colon \F_q \times \F_q \to \F_q$ gives its cohomology 
the structure of a (coassociative cocommutative counital) coalgebra. 
We will show that, although $GL_2\F_q$ is not abelian, its cohomology 
inherits a coproduct from the Sylow $p$-subgroup $\F_q$ of $GL_2\F_q$.
Observe that the transfer map 
\[
	\trna \colon H^\ast(\F_q) \longto  H^\ast(GL_2\F_q) 
\]
is a retraction onto the image of $\textrm{res}:H^\ast(GL_2\F_q) \incl H^\ast(\F_q)$.
\begin{prop}
\label{prop:coproduct}
For any field $\F$ of characteristic $p$, 
the group cohomology 
$H^\ast(GL_2\F_q;\,\F)$ admits a unique 
coassociative cocommutative counital coproduct $\Delta$
making 
\[
	\trna \colon H^\ast(\F_q;\,\F) \longto  H^\ast(GL_2\F_q;\,\F) 
\]
into a homomorphism of coalgebras.
\end{prop}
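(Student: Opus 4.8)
The plan is to transport the coalgebra structure on $H^\ast(\F_q)$ along the retraction. Write $\mathrm{res}\colon H^\ast(GL_2\F_q) \hookrightarrow H^\ast(\F_q)$ for the (split injective) restriction map and $\trna$ for the transfer, so that $\trna \circ \mathrm{res} = \mathrm{id}$ up to the unit $[GL_2\F_q : \F_q] \equiv 1 \pmod p$; after normalizing, $\mathrm{res}$ identifies $H^\ast(GL_2\F_q)$ with the direct summand $E_0 = H^\ast(\F_q)^{\F_q^\times}$ of $H^\ast(\F_q)$, and $\trna$ with the projection onto that summand. First I would observe that, since $\trna$ must be a coalgebra map, the coproduct is forced: applying $\Delta_{GL} \circ \trna = (\trna \otimes \trna)\circ \Delta_{\F_q}$ and precomposing with $\mathrm{res}$ gives $\Delta_{GL} = (\trna \otimes \trna)\circ \Delta_{\F_q}\circ \mathrm{res}$, since $\trna \circ \mathrm{res} = \mathrm{id}$. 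This proves uniqueness immediately, and it only remains to check that this formula defines a coproduct with the required properties.

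The key point for existence is that $E_0 \subseteq H^\ast(\F_q)$ is not merely a direct summand but a \emph{subcoalgebra}: the addition map $\mu\colon \F_q \times \F_q \to \F_q$ is equivariant for the diagonal $\F_q^\times$-action, so $\mu^\ast$ is a map of $\F_q^\times$-representations, hence carries the weight-$0$ part into the weight-$0$ part of $H^\ast(\F_q \times \F_q)$. Under the Künneth isomorphism $H^\ast(\F_q\times\F_q) \cong H^\ast(\F_q)\otimes H^\ast(\F_q)$ (valid with field coefficients), the weight-$0$ subspace contains $E_0 \otimes E_0$ — indeed one checks using the weight-space decomposition \eqref{eq:weightspacedecomp} that the weight-$0$ part of the tensor product is $\bigoplus_{j+k \equiv 0} E_j \otimes E_k$, which is larger than $E_0\otimes E_0$, so some care is needed here; but the relevant statement is just $\Delta_{\F_q}(E_0) \subseteq E_0 \otimes E_0$, which follows because $\Delta_{\F_q}$ lands in the weight-$0$ part and one then has to verify the stronger containment. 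Here the cleanest route is to use that $\trna$, being transfer along a surjection of the form $\F_q \rtimes \F_q^\times \twoheadrightarrow$ (or rather restriction/transfer for $\F_q \le GL_2\F_q$), is exactly the projection $H^\ast(\F_q) = \bigoplus_k E_k \to E_0$, and then $(\trna \otimes \trna)\colon H^\ast(\F_q)^{\otimes 2} \to E_0 \otimes E_0$ is projection onto the $E_0\otimes E_0$ summand; composing with $\Delta_{\F_q}\colon E_0 \to (\text{weight }0) = \bigoplus_{j+k\equiv 0}E_j\otimes E_k$ and then projecting kills all the off-diagonal terms, so $\Delta_{GL}$ is well-defined with no further argument. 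Finally, coassociativity, cocommutativity, and counitality of $\Delta_{GL}$ follow formally: each is an identity between maps $E_0 \to E_0^{\otimes k}$ obtained from the corresponding identity for $\Delta_{\F_q}$ by pre- and post-composing with $\mathrm{res}$, $\trna$, and using $\trna\circ\mathrm{res}=\mathrm{id}$ together with naturality of the Künneth map; the counit is $\mathrm{res}$ followed by the counit of $H^\ast(\F_q)$, i.e. projection to degree $0$.

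The main obstacle I anticipate is precisely the well-definedness step: \emph{a priori} $(\trna\otimes\trna)\circ\Delta_{\F_q}\circ\mathrm{res}$ lands in $E_0\otimes E_0 \cong H^\ast(GL_2\F_q)\otimes H^\ast(GL_2\F_q)$ with no issue, so the real content is showing this $\Delta_{GL}$ is \emph{coassociative} — i.e. that projecting away the "cross-weight" terms $E_j\otimes E_k$ with $j+k\equiv 0$ but $j \ne 0$ is compatible with iterating the coproduct. Concretely one must check $(\Delta_{GL}\otimes\mathrm{id})\circ\Delta_{GL} = (\mathrm{id}\otimes\Delta_{GL})\circ\Delta_{GL}$ as maps $E_0 \to E_0^{\otimes 3}$, and the danger is that applying $\trna$ mid-computation discards terms that would have contributed. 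The resolution is that $\trna\otimes\trna\otimes\trna$ applied to the weight-$0$ part of $H^\ast(\F_q)^{\otimes 3}$ only sees the $E_0^{\otimes 3}$ summand, and both iterated coproducts agree with $\Delta^{(3)}_{\F_q}\colon H^\ast(\F_q)\to H^\ast(\F_q)^{\otimes 3}$ followed by this projection — because $\trna$ is a ring-summand projection and $\Delta_{\F_q}$ is coassociative, inserting $\mathrm{res}\circ\trna$ (an idempotent on $H^\ast(\F_q)$) in the middle of the iterated coproduct does not change the composite after the final triple transfer, since the extra $E_k$ ($k\ne 0$) contributions are annihilated at the last stage anyway. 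Spelling this out carefully — that a summand-projection can be freely inserted between two coassociative comultiplications without affecting the result modulo that projection — is the one genuinely non-formal lemma, and it is where I would concentrate the write-up.
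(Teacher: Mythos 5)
Your final argument is correct, but the route is genuinely different from the paper's, and your write-up takes a wrong turn before reaching it, so let me flag both.

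The paper's proof is organized around the observation that $J=\ker(\trna)$ is a \emph{coideal}: it shows $\mu^*(J)\subset J\otimes H^*(\F_q)+H^*(\F_q)\otimes J$, using the single weight-preservation fact $\mu^*(E_k)\subset\bigoplus_{i+j=k}E_i\otimes E_j$, and then existence, uniqueness, coassociativity, cocommutativity and counitality of the quotient coproduct all follow at once from general coalgebra theory. You instead define $\Delta_{GL}=(\trna\otimes\trna)\circ\Delta_{\F_q}\circ\mathrm{res}$ and check the axioms by hand. The key input is the same weight-preservation fact, and your coassociativity argument (that inserting the idempotent $P=\mathrm{res}\circ\trna$ in the middle does not change the composite after the outer $\trna^{\otimes 3}$, because any cross-term $E_j\otimes E_k$ with $j\neq 0$ forces $k=-j\neq 0$, which is annihilated at the last stage anyway) is correct. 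But your approach requires separate verifications, whereas the coideal framing packages them all into a single inclusion; you are, in effect, reproving the coideal property without naming it.

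Two caveats on the write-up. First, the assertion that $E_0$ is a subcoalgebra of $H^*(\F_q)$ --- equivalently $\Delta_{\F_q}(E_0)\subset E_0\otimes E_0$ --- is simply false: already for $q=p$, $\Delta(y^{p-1})$ contains the term $(p-1)\,y\otimes y^{p-2}\in E_1\otimes E_{p-2}$. You seem to sense this and pivot to the correct projection argument, but the sentence ``which follows because $\Delta_{\F_q}$ lands in the weight-$0$ part and one then has to verify the stronger containment'' should be excised, as the stronger containment cannot be verified. Second, your closing lemma --- ``a summand-projection can be freely inserted between two coassociative comultiplications without affecting the result modulo that projection'' --- is not a true statement for arbitrary direct-summand projections; it is specifically the weight-grading (the summand is $E_0$, and $\Delta$ respects the grading with weights adding) that makes it work here, so state it as a fact about weight decompositions rather than as a general principle. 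Finally, you should note explicitly, as the paper does, that the weight-space decomposition requires $\F\supseteq\F_q$, and that the general case follows by the compatibility of transfer and restriction with base change of coefficient field; your formula for $\Delta_{GL}$ makes sense over any $\F$, but your verification of the coalgebra axioms does not without this reduction.
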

\begin{proof}
It suffices to verify that the kernel $J = \ker(\trna)$
is a coideal, that is, that $J$ is annihilated by the 
counit $H^\ast(\F_q;\,\F) \to \F$ and that 
\[
	\mu^\ast J
	\subset 
	H^\ast(\F_q;\,\F) \tensor_\F J + J\tensor_\F H^\ast(\F_q;\,\F).
\]
The first of these conditions is immediate form the observation 
that $\trna$ is an isomorphism in degree 0. 
To verify the 
second condition, observe that if it holds for the field $\F$,
it also holds for any subfield of $\F$, 
by the compatibility of 
transfers and induced maps with extending the coefficient field
for cohomology. Thus it is enough to check the condition 
in the case where $\F$ is an extension of $\F_q$. 
In that case, we have the weight-space decomposition
\eqref{eq:weightspacedecomp}, 
with respect to which the inclusion 
$H^\ast(GL_2\F_q) \incl H^\ast(\F_q)$
corresponds to the inclusion 
$E_0 \incl \bigoplus_k E_k$ and the 
transfer $\trna$ to the projection 
$\bigoplus_k E_k \to E_0$. In particular, we have 
\[
	J= \bigoplus_{k\neq 0} E_k.
\]
Because the addition map $\mu:\F_q\times\F_q\to\F_q$ 
is equivariant with respect to the action of the 
multiplicative group, the induced map 
$\mu^*\colon H^*(\F_q)\to H^*(\F_q)\otimes H^*(\F_q)$ satisfies
\[ \mu^*(E_k) \subset \bigoplus_{i+j=k}E_i\otimes E_j. \]
Thus
\begin{align*}
\mu^*(J) 
&\subset\bigoplus_{i+j\neq0}E_i\otimes E_j\\
&\subset\bigoplus_{\substack{i\neq0\textrm{ or }\\j\neq0}}E_i\otimes E_j\\
&=J\otimes H^*(\F_q;\,\F) + H^*(\F_q;\,\F)\otimes J.
\qedhere
\end{align*}
\end{proof}

We warn the reader that the coproduct does not make 
$H^*(GL_2\F_q)$ into a Hopf algebra, 
as it is not compatible
with the cup product.

Explicitly, we can describe $\Delta$ 
in terms of the  monomial basis of Lemma~\ref{GL2}
under the assumption that $\F$ is an extension of $\F_q$:
\[ \Delta\left(x^Ay^B\right)=\sum_{\substack{A'+A''=A\\B'+B''=B\\P(A'+B')}}\binom{B}{B'}
x^{A'}y^{B'}\otimes x^{A''}y^{B''}. \]
Here we have used the shorthand
\[ \binom{B}{B'} = \binom{b_0}{b_0'}\cdots\binom{b_{r-1}}{b_{r-1}'}, \]
and the sum runs over only those pairs with the divisibility property
\begin{equation}
\label{eq:divprop}
P(C) = \left[ (p^r-1)\mid\sum_{k=0}^{r-1}p^kc_k \right]. 
\end{equation}
(As usual, for $p=2$ one must remove the exterior terms.)
Informally, one simply performs the usual coproduct on $H^*(\F_q)$ 
(which is multiplicative, 
with the generators primitive), and then throws out all 
terms which do not satisfy the divisibility condition~\eqref{eq:divprop}.

\section{Wedge sum formula}\label{wedge}
While our characteristic classes vanish on direct sums by 
Corollary~\ref{cor:vanishingondecomposables}, they exhibit
interesting behavior with respect to the following wedge sum 
construction, which we can use to combine two representations 
without proliferating their fixed-points.

\begin{defn}
\label{def:pointedrep-wedgesum}
A \emph{pointed representation} of a group $G$ is 
a representation $\rho$ of $G$ 
equipped with the choice of a
\emph{basepoint}, a non-zero vector $v_0 \in \rho$ fixed
by the $G$-action.
If $(\rho,v_0)$ and $(\eta,w_0)$ are  pointed representations 
of groups $G$ and $H$, respectively, 
we define their \emph{wedge sum} to be the representation 
\[ \rho\vee\eta = \rho\oplus\eta/\langle v_0-w_0 \rangle, \]
of $G\times H$,
a pointed representation with basepoint $v_0=w_0$.
The isomorphism type of 
$\rho\vee\eta$
as a pointed representation remains unchanged if the
basepoints $v_0$ and $w_0$ are replaced by non-zero multiples.
Hence for representations with unique fixed lines,
we will take the liberty to form wedge sums without explicitly specifying the basepoints.
\end{defn}
\begin{remark}
Working with pointed representations 
may appear overly restrictive, since not all representations admit 
a non-zero fixed vector. However, from the point of view of 
mod-$p$ cohomology it is no loss, due to the 
upper-triangularization principle of 
Remark~\ref{rk:uppertriangularization}.
\end{remark}

We describe how to calculate the classes of a wedge sum in terms
of the classes of the two factors:
\begin{thm}\label{wedgeformula}
If $\rho$ and $\eta$ are pointed representations of $G$ and $H$, respectively, then for all $\alpha\in H^*(GL_2\F_q)$
\begin{align} \label{eq:wedgeformula}
	\chi_\alpha(\rho\vee\eta)
	&=
	-\sum \chi_{\alpha_{(1)}}(\rho)\times\chi_{\alpha_{(2)}}(\eta)
\end{align}
as an element of $H^*(G\times H)$,
where
\[ \Delta(\alpha)=\sum \alpha_{(1)}\otimes\alpha_{(2)} \]
is the coproduct constructed in section~\ref{coprod}.
\end{thm}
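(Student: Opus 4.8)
The plan is to compute $\chi_\alpha(\rho\vee\eta)$ directly from Definition~\ref{def:chiclassdef2}, using the observation that a linear embedding $f\colon \F_q^2 \incl \rho\vee\eta$ whose image contains the common basepoint $v_0 = w_0$ is essentially the same data as a pair of partial embeddings into $\rho$ and $\eta$ that agree on the preimage of the basepoint. First I would invoke Corollary~\ref{cor:embatleastv} with $V = \langle v_0 \rangle \leq (\rho\vee\eta)^{G\times H}$, so that $\chi_\alpha(\rho\vee\eta) = (\pi_! \circ \tau^\ast)(\alpha)$ where the homotopy orbit space is built from $\Emb^{(\geq V)}(\F_q^2, \rho\vee\eta)$. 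The point of passing to embeddings that contain the basepoint line is that such an $f$ is determined by: (a) a line $\ell \subset \F_q^2$, namely $f^{-1}\langle v_0\rangle$, (b) the restriction $f|_\ell$, which lands in $\langle v_0\rangle$, and (c) the induced map $\F_q^2/\ell \to (\rho\vee\eta)/\langle v_0\rangle = \rho/\langle v_0\rangle \oplus \eta/\langle w_0\rangle$, whose two components I want to lift back to $\rho$ and $\eta$ respectively.

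The key step is therefore to produce a decomposition of the $(G\times H\times GL_2\F_q)$-set $\Emb^{(\geq V)}(\F_q^2,\rho\vee\eta)$ indexed by lines $\ell\subset \F_q^2$ (equivalently, by the $GL_2\F_q$-orbit of the flag determined by $\ell$), and to identify each piece, up to homotopy after taking homotopy orbits, with a product of the spaces appearing in the Definition~\ref{def:chiclassdef2} diagrams for $\rho$ and for $\eta$. Concretely, fixing $\ell$ breaks the $GL_2\F_q$ symmetry down to the parabolic $\Aff_1\F_q = \begin{bmatrix} 1 & \ast \\ 0 & \ast\end{bmatrix}$ stabilizing the flag $0 \subset \ell \subset \F_q^2$ (after choosing a basis so $\ell$ is the first coordinate line); by Remark~\ref{aff1}, restriction $H^\ast(GL_2\F_q) \to H^\ast(\Aff_1\F_q)$ is an isomorphism, so working with this smaller group loses no cohomological information. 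On the summand for a fixed $\ell$, the data (b)--(c) above exhibit the space as (up to the relevant homotopy equivalences and covering-space identifications) the fibre product over $\pt\sslash(G\times H)$ of $\Emb(\F_q^2,\rho)\sslash G\times GL_2\F_q$ and $\Emb(\F_q^2,\eta)\sslash H\times GL_2\F_q$ — with the two $GL_2\F_q$ maps glued along $\Aff_1\F_q$ — and the transfer $\pi_!$ and pullback $\tau^\ast$ factor as the corresponding maps for $\rho$ and $\eta$ combined with the addition map $\mu\colon \F_q\times\F_q \to \F_q$ on the $\Aff_1$-level (this is where the coproduct of section~\ref{coprod} enters: $\tau^\ast$ on this piece is $\mu^\ast$ followed by the two evaluation maps). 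Running the double-coset / Mackey-type bookkeeping, the contribution of the $\ell$-summand is $\sum \chi_{\alpha_{(1)}}(\rho)\times\chi_{\alpha_{(2)}}(\eta)$ composed with a scalar counting how many lines $\ell$ contribute and with what multiplicity.

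The remaining step is the bookkeeping of scalars and the sign. There are $q+1$ lines $\ell\subset\F_q^2$, but I expect most of them to contribute zero by an argument analogous to Lemma~\ref{allfixedpoints}: an embedding $f$ with $f^{-1}\langle v_0\rangle = 0$ (i.e.\ the basepoint line is not hit "through" $\ell$ but transversally) should fall into a piece where a $p$-divisible covering appears, killing the transfer; and I need to check that exactly the configurations surviving assemble to give the stated formula with overall coefficient $-1$. The sign $-1$ I anticipate arising from the short exact sequence $0 \to \langle v_0 \rangle \to \rho\oplus\eta \to \rho\vee\eta' \oplus \dots$ — more precisely from comparing the count of embeddings hitting the basepoint versus not, via an Euler-characteristic / inclusion–exclusion identity of the shape $(\text{total}) = (\text{hitting } V) + (\text{missing } V)$ where the "total" term vanishes mod $p$ (it is $\chi_\alpha$ of a representation with a $2$-dimensional trivial summand, which is $0$ by Theorem~\ref{twofixedlines} applied after adding back a complement), so that the basepoint-hitting and basepoint-missing contributions are negatives of each other, and the basepoint-missing one is what computes $\sum\chi_{\alpha_{(1)}}(\rho)\times\chi_{\alpha_{(2)}}(\eta)$.

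\medskip

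\noindent\textbf{Main obstacle.} The hard part will be making the geometric identification in the second paragraph precise: carefully setting up the indexing set of "gluing configurations" for embeddings into $\rho\vee\eta$, tracking which homotopy equivalences and which $p$-divisible covering spaces appear in each stratum, and verifying that the transfer-then-restrict composite on a surviving stratum really does factor through $\mu^\ast$ so that the section~\ref{coprod} coproduct — and not some other comultiplication — is the one that shows up. Pinning down the sign unambiguously (as opposed to up to sign) is the second delicate point, and I would handle it by the inclusion–exclusion argument sketched above rather than by chasing orientations through the transfers.
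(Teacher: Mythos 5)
Your proposal correctly identifies several of the structural ingredients that appear in the paper's argument: working with embeddings whose image contains the basepoint line (Corollary~\ref{cor:embatleastv}), breaking the $GL_2\F_q$-symmetry down to $\Aff_1\F_q$ or to the Sylow $\F_q$ via the isomorphisms of Remark~\ref{aff1}, and having the addition map $\mu\colon \F_q\times\F_q\to\F_q$ be the source of the coproduct $\Delta$. So the architecture is in the right neighborhood. However, the detailed strategy has two genuine gaps. First, the proposed ``decomposition indexed by lines $\ell\subset\F_q^2$'' does not yield a stratification: once you restrict to $\Emb^{(\geq\langle v_0\rangle)}$, the preimage $\ell = f^{-1}\langle v_0\rangle$ is always a genuine line, and $GL_2\F_q$ acts transitively on the $q+1$ lines, so the homotopy orbit does not split into pieces you can analyse separately; the case ``$f^{-1}\langle v_0\rangle = 0$'' that you want to discard via a Lemma~\ref{allfixedpoints}-style $p$-divisibility argument simply cannot occur in that orbit set. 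Second, the sign argument as sketched is a dead end: if you decompose $\chi_\alpha$ of a representation with a two-dimensional trivial summand by the fixed subspaces $V$, \emph{every} term in the decomposition already vanishes by Lemma~\ref{allfixedpoints} (either $\dim V = 2$ or $V$ is proper in the fixed space), so you get $0 = 0 + 0$ and no inclusion--exclusion information about $\chi_\alpha(\rho\vee\eta)$ or about $\sum\chi_{\alpha_{(1)}}(\rho)\times\chi_{\alpha_{(2)}}(\eta)$; and there is no natural bijection between embeddings into $\rho\oplus\eta$ and data computing those two quantities.

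The paper avoids both problems by first proving an \emph{affine reformulation} (Proposition~\ref{prop:pointedrepformula}): for a pointed representation $(\rho,v_0)$ with $\dim\rho\geq 2$,
\[
\chi_\alpha(\rho) = -(\pi_!\circ\tau^\ast\circ i^\ast)(\alpha)
\]
over the push--pull diagram $\pt\sslash G \xleftarrow{\pi} \rho\sslash G\times\F_q \xrightarrow{\tau} \pt\sslash\F_q$, where $\F_q$ acts by $(c,v)\mapsto v-cv_0$. This makes the Sylow subgroup $\F_q$ the ambient group from the start, so the $\Aff_1\F_q$ bookkeeping becomes a single clean factorization statement (Lemma~\ref{hatclassesagree}) rather than a Mackey stratification, and — crucially — the sign $-1$ is \emph{forced} at this stage: the comparison map $\phi$ from the affine diagram to the $\Emb^{(\geq\langle v_0\rangle)}$ diagram is, up to homotopy, the covering $(\rho/\langle v_0\rangle - 0)\sslash G \to \bbP(\rho/\langle v_0\rangle)\sslash G$ of multiplicity $q-1\equiv -1\ (\mathrm{mod}\ p)$, so $\phi_!\phi^\ast = -1$. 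Once this reformulation is in hand, the wedge formula follows quickly because $\rho\vee\eta$ is precisely the quotient of $\rho\times\eta$ by the free action of $\ker\mu\subset\F_q\times\F_q$, which makes $\mu^\ast$ appear by naturality, and the coproduct identity $\Delta = (i\times i)_!\circ\mu^\ast\circ i^\ast$ plus Lemma~\ref{hatclassesagree} finish the computation. Without Proposition~\ref{prop:pointedrepformula} — or an equivalent device that produces the $-1$ from a concrete covering multiplicity rather than from the inclusion--exclusion you describe — I do not see how your outline would close.
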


We begin the proof by establishing an ``affine version'' of the push-pull construction in
Definition~\ref{def:chiclassdef2},
better suited to working with pointed representations.
Let $i:\F_q\to GL_2\F_q$ be the inclusion.
\begin{prop}
\label{prop:pointedrepformula}
Let $(\rho,v_0)$ be a pointed representation of a group $G$
with $\dim(\rho) \geq 2$.
The characteristic classes of $\rho$ are given by following the diagram
\[\xymatrix{
	& 
	\rho \sslash G\times\F_q
	\ar[dl]_-{!}^{\pi}
	\ar[dr]_-{\tau}
	\\
	\pt\sslash G
	&&
	\pt \sslash \F_q
	\ar[r]^-{i}
	&
	\pt\sslash GL_2\F_q
}\]
where the action of $G\times \F_q$ on $\rho$ is given by
\[ (g,c)\cdot v = gv - cv_0. \]
More precisely,
\[
	\chi_\alpha(\rho) = -(\pi_!\circ\tau^\ast\circ i^*)(\alpha)
\]
for all $\alpha \in H^*(GL_2\F_q)$.
\end{prop}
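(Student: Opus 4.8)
The plan is to reduce the $GL_2\F_q$-level push-pull of Definition~\ref{def:chiclassdef2} to an $\F_q$-level one, using the basepoint $v_0$ to rigidify the embeddings. First I would observe that an embedding $f \in \Emb(\F_q^2,\rho)$ together with a choice of which line of $\F_q^2$ maps to $\langle v_0\rangle$ amounts to: a vector $v = f(e_2) \neq 0$ together with the coset data, but more efficiently, I want to restrict attention to those $f$ with $f(e_1) = v_0$. So consider the subspace
\[
	\Emb_{v_0}(\F_q^2,\rho) = \set{f \in \Emb(\F_q^2,\rho)}{f(e_1) = v_0} \subset \Emb(\F_q^2,\rho),
\]
which is acted on by $G \times \Aff_1\F_q$, where $\Aff_1\F_q = \bigl[\begin{smallmatrix} 1 & \ast \\ 0 & \ast\end{smallmatrix}\bigr] \leq GL_2\F_q$ is exactly the stabilizer of $e_1$ in $GL_2\F_q$. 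The inclusion $\Emb_{v_0}(\F_q^2,\rho) \hookrightarrow \Emb(\F_q^2,\rho)$ induces a map $\Emb_{v_0}(\F_q^2,\rho)\sslash G\times\Aff_1\F_q \to \Emb(\F_q^2,\rho)\sslash G\times GL_2\F_q$ which, since $GL_2\F_q$ acts freely and transitively on the $e_1$-values through the left $GL_2\F_q$-action and $\Aff_1\F_q$ is precisely the stabilizer, is a homotopy equivalence. This identifies the push-pull of Definition~\ref{def:chiclassdef2} with the analogous construction over $\Aff_1\F_q$, at the cost of replacing $\alpha$ by its restriction to $\Aff_1\F_q$ — but by Remark~\ref{aff1}, restriction $H^*(GL_2\F_q) \to H^*(\Aff_1\F_q)$ is an isomorphism, and by the projection formula the transfer $\pi_!$ is compatible with this.

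Next I would set up the identification $\Emb_{v_0}(\F_q^2,\rho) \cong \rho \setminus \langle v_0\rangle$, sending $f$ to $f(e_2)$: this is a bijection of $(G\times\Aff_1\F_q)$-sets, where $\Aff_1\F_q = \F_q \rtimes \F_q^\times$ acts so that the $\F_q^\times$-factor scales $f(e_2)$ and the $\F_q$-factor (the unipotent part, i.e.\ the $p$-Sylow) adds a multiple of $v_0$. Comparing with the action $(g,c)\cdot v = gv - cv_0$ of $G\times\F_q$ in the statement, the $\F_q$-piece matches (up to the sign in $-cv_0$, absorbed by reparametrising). I would then argue that passing from the $\Aff_1\F_q$-push-pull to the $\F_q$-push-pull on $\rho$ (rather than $\rho\setminus\langle v_0\rangle$) introduces exactly the claimed sign and the restriction $i^*$: the transfer for the covering $\rho\sslash(G\times\F_q) \supset (\rho\setminus\langle v_0\rangle)\sslash(G\times\F_q)$ relates the two, and including the fixed line $\langle v_0\rangle$ back in contributes the "missing'' piece. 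Here I would use that $\rho = (\rho\setminus\langle v_0\rangle) \sqcup \langle v_0\rangle$ as $\F_q$-sets is \emph{not} a decomposition one can use directly (the complement of a subspace is not a subspace), so instead I would quotient by $\langle v_0\rangle$: the quotient map $\rho \to \rho/\langle v_0\rangle$ restricted to $\rho\setminus\langle v_0\rangle$ is surjective with all fibres of size $q \equiv 0 \bmod p$, except we must be careful — actually the cleanest route is to compare $\rho\sslash(G\times\F_q)$ with $\pt\sslash G$ directly and identify the transfer via the $\F_q$-action having $\langle v_0\rangle$ as a trivial summand, so that $\pi$ factors through $(\rho/v_0)\sslash G$ and the extra $B\F_q = \pt\sslash\F_q$ direction is precisely what supplies $\tau^*\circ i^*$.

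The main obstacle, I expect, is bookkeeping the sign $-1$ and confirming that it is genuinely the restriction $i^*\colon H^*(GL_2\F_q) \to H^*(\F_q)$ that appears, rather than some twisted variant. The sign should emerge from an Euler-characteristic or inclusion–exclusion count: the transfer $\pi_!$ in Definition~\ref{def:chiclassdef2} sums over the Grassmannian $\Gr_2(\rho)$, whereas the affine picture sums over $\rho\setminus\langle v_0\rangle$ modulo scaling, and the discrepancy $|\Gr_2(\rho)| - (\text{affine count})$ reduces mod $p$ to $-1$ via the standard identity $\sum_{i}(-1)^i\binom{n}{i}_q$-type cancellation, or more simply because $\langle v_0\rangle$ contributes one "forbidden'' line whose removal flips orientation of the relevant cofibre sequence. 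I would make this precise by working with a double coset / Mackey-style decomposition of $\Emb(\F_q^2,\rho)\sslash G\times GL_2\F_q$ according to the position of $\im(f)$ relative to $\langle v_0\rangle$ — namely whether $v_0 \in \im(f)$ or not — and noting that by an argument parallel to Lemma~\ref{allfixedpoints} the stratum with $v_0 \notin \im(f)$ contributes zero (a $p$-divisible covering), leaving only the $v_0 \in \im(f)$ stratum, which is exactly $\Emb_{v_0}$ up to reparametrisation and carries the sign. The rest is routine compatibility of transfers with the homotopy equivalences and with $i^*$, via the projection formula and the functoriality of the bar construction.
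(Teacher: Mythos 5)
Your plan is essentially the paper's own proof read in the opposite direction: stratify by whether $v_0\in\im(f)$ (so that Lemma~\ref{allfixedpoints}/Corollary~\ref{cor:embatleastv} kill everything else), rigidify the $v_0\in\im(f)$ stratum so that $f(e_1)=v_0$, identify what remains with $\rho\sslash G\times\F_q$, and pick up the sign from a covering of multiplicity $q-1\equiv-1\pmod p$. Two points need repair, however. First, the claim that $\Emb_{v_0}(\F_q^2,\rho)\sslash G\times\Aff_1\F_q\to\Emb(\F_q^2,\rho)\sslash G\times GL_2\F_q$ is a homotopy equivalence is false as stated: the $GL_2\F_q$-action only moves $f(e_1)$ within $\im(f)$, so this map is an equivalence onto the stratum $\Emb^{(\geq\langle v_0\rangle)}(\F_q^2,\rho)\sslash G\times GL_2\F_q$ and misses the entire open stratum where $v_0\notin\im(f)$. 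You do notice this in your final paragraph and propose to kill the other stratum, but that stratification has to be the opening move; the equivalence you assert at the outset is simply wrong for $\dim\rho>2$. Second, the formula is asserted for all $\alpha\in H^\ast(GL_2\F_q)$ including degree $0$, where $\chi_\alpha(\rho)=0$ by convention while the right-hand side is multiplication by the covering multiplicity $|\rho/\langle v_0\rangle|=q^{\dim\rho-1}\equiv0\pmod p$; this needs to be checked separately (as the paper does), and your sketch omits it.

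The paper's proof bypasses the $\Aff_1\F_q$ intermediary: it starts from the displayed $\F_q$-diagram, splits $\rho$ $G\times\F_q$-equivariantly as $\langle v_0\rangle\sqcup(\rho-\langle v_0\rangle)$, kills the $\langle v_0\rangle$ piece (its map to $\pt\sslash\F_q$ factors through the contractible $\langle v_0\rangle\sslash\F_q$), and maps $(\rho-\langle v_0\rangle)\sslash G\times\F_q$ directly to $\Emb^{(\geq\langle v_0\rangle)}(\F_q^2,\rho)\sslash G\times GL_2\F_q$ by $v\mapsto(e_1\mapsto v_0,\,e_2\mapsto v)$, a covering of degree $q-1$. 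The essential content and the source of the sign are the same as in your sketch; only the order of reductions and the use (or not) of $\Aff_1\F_q$ differ.
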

\begin{proof}
We first observe that the formula is correct when $\deg\alpha=0$.  In that case, 
$\chi_\alpha(\rho)$
is zero by convention, while the right hand side is multiplication by the
multiplicity of the covering $\pi$, which is 
\[ |\rho/\langle v_0\rangle|=q^{\dim\rho-1}, \]
a multiple of $q$ (so zero on cohomology).

Hence we assume $\alpha\in H^{>0}(GL_2\F_q)$.
Now, the expression on the right hand side splits as a sum of two terms, corresponding to the
equivariant decomposition
\[ \rho = \langle v_0\rangle \coprod (\rho-\langle v_0\rangle). \]
The former term vanishes, because
$\langle v_0\rangle\sslash G\times\F_q\to\pt\sslash\F_q$ factors through
the contractible space $\langle v_0\rangle\sslash\F_q$.
So we must verify that the latter term in the sum agrees with $\chi_\alpha(\rho)$.
Consider the commutative diagram
\[ \xymatrix@C+0.5em{
	\pt\sslash G
	&
	\Emb^{(\geq\langle v_0\rangle)}(\F_q^2,\rho)\sslash G\times GL_2\F_q
	\ar[l]^-{!}_-{\pi'}
	\ar[r]^-{\tau'}
	&
	\pt\sslash GL_2
	\\
	&
	\rho-\langle v_0\rangle\sslash G\times\F_q
	\ar[ul]^-{!}_-{\pi''}
	\ar[u]^-{\phi}
	\ar[r]^-{\tau''}
	&
	\pt\sslash\F_q
	\ar[u]^{i}
} \]
where $\Emb^{(\geq\langle v_0\rangle)}(\F_q^2,\rho)$
is as in Corollary~\ref{cor:embatleastv}
and where $\phi$ is induced by the map sending 
$v\in\rho-\langle v_0\rangle$ to the embedding $e_1\mapsto v_0$, $e_2\mapsto v$.

Following the diagram along the bottom, 
i.e.~$\pi''_!\circ(\tau'')^*\circ i^*$,
yields the right hand side of the proposition, without the minus sign.
Meanwhile, following the top of the diagram,
i.e.~$\pi'_!\circ(\tau')^*$,
yields 
$\alpha\mapsto\chi_\alpha(\rho)$, 
by Corollary~\ref{cor:embatleastv}.
Now, up to homotopy equivalence, $\phi$ agrees with the covering map
\[ \left(\frac{\rho}{\langle v_0\rangle}-0\right)\sslash G
	\longto \bbP\left(\frac{\rho}{\langle v_0\rangle}\right)\sslash G  \]
which has multiplicity $q-1$.
(We used that the actions of $GL_2\F_q$ and $\F_q$ are free, to replace their
homotopy quotients with strict quotients.)
So
$\phi_!\circ\phi^*=-1$, and we get
\begin{align*}
	\chi_\alpha(\rho) 
	&= (\pi'_!\circ\tau'^*)(\alpha)\\ 
	&= - (\pi'_!\circ(\phi_!\circ\phi^*)\circ{\tau'}^*)(\alpha) \\
	&= - (\pi''_!\circ{\tau''}^*\circ i^*)(\alpha).\qedhere
\end{align*}
\end{proof}

We will also need the following fact:
\begin{lemma}\label{hatclassesagree}
In the notation of Proposition~\ref{prop:pointedrepformula},
\[ \pi_!\circ\tau^*\circ (i^*\circ i_!) = \pi_!\circ\tau^*. \]
\end{lemma}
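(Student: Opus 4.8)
The plan is to reduce the identity to a statement about the weight grading on $H^*(\F_q)$ and then to exploit an extra $\F_q^\times$-symmetry of the set-up of Proposition~\ref{prop:pointedrepformula}. First I would note that $i^*$, $i_!$, $\tau^*$ and $\pi_!$ all commute with extending the coefficient field and that $H^*(-;\F)\incl H^*(-;\F)\otimes_\F\E$ is injective, so it suffices to treat the case in which $\F$ is an extension of $\F_q$. Then the weight-space decomposition $H^*(\F_q)=\bigoplus_{k\in\Z/(q-1)}E_k$ of \eqref{eq:weightspacedecomp} is available, and --- as observed in Section~\ref{coprod} --- the composite $i^*\circ i_!$, being the restriction applied to the transfer $H^*(\F_q)\to H^*(GL_2\F_q)$, is exactly the projection of $H^*(\F_q)$ onto $E_0=H^*(GL_2\F_q)$ along $\bigoplus_{k\neq0}E_k$. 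Hence the lemma is equivalent to the assertion that $\pi_!\circ\tau^*$ annihilates $E_k$ for every $k\neq0$.

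To prove that, I would enlarge the group $\F_q$ acting on $\rho$ (it acts by $c\cdot v=v-cv_0$, in the notation of Proposition~\ref{prop:pointedrepformula}) to the affine group $\Aff_1\F_q=\begin{bmatrix}1&\ast\\0&\ast\end{bmatrix}$, letting $\begin{bmatrix}1&c\\0&\lambda\end{bmatrix}$ act on $\rho$ by $v\mapsto\lambda^{-1}(v-cv_0)$. A direct check shows that this is a group action, that it restricts to the given $\F_q$-action on the unipotent subgroup, and that it commutes with the $G$-action (here one uses $gv_0=v_0$). Fix $\lambda\in\F_q^\times$ and set $\widetilde\lambda=\left(1,\begin{bmatrix}1&0\\0&\lambda\end{bmatrix}\right)\in G\times\Aff_1\F_q$. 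As $G\times\F_q$ is normal in $G\times\Aff_1\F_q$, conjugation by $\widetilde\lambda$ together with the action of $\widetilde\lambda$ on $\rho$ induces a self-homeomorphism $\delta_\lambda$ of $\rho\sslash G\times\F_q$. Reading off the underlying group homomorphisms (and noting that the maps of spaces involved factor through $\pt$), I expect to obtain $\pi\circ\delta_\lambda=\pi$ and $\tau\circ\delta_\lambda=\psi_\lambda\circ\tau$, where $\psi_\lambda\colon\pt\sslash\F_q\to\pt\sslash\F_q$ is induced by the automorphism $c\mapsto\lambda^{-1}c$ of $\F_q$.

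Granting these compatibilities, the conclusion is formal. Since $\pi\circ\delta_\lambda=\pi$, the homeomorphism $\delta_\lambda$ is an automorphism of the covering $\pi$ (a finite covering up to homotopy equivalence), whence $\pi_!\circ\delta_\lambda^*=\pi_!$; combining this with $\delta_\lambda^*\circ\tau^*=\tau^*\circ\psi_\lambda^*$ gives $\pi_!\circ\tau^*\circ\psi_\lambda^*=\pi_!\circ\delta_\lambda^*\circ\tau^*=\pi_!\circ\tau^*$ for every $\lambda\in\F_q^\times$. Now $\psi_\lambda^*$ acts on $E_k$ as multiplication by the power of $\lambda$ given by the character that cuts out $E_k$, which is nontrivial whenever $k\neq0$ provided $\lambda$ is chosen to generate $\F_q^\times$. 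Therefore $\pi_!\circ\tau^*$ kills every $E_k$ with $k\neq0$; since $x-(i^*i_!)(x)\in\bigoplus_{k\neq0}E_k$ for all $x$, this yields $\pi_!\tau^*(i^*i_!)=\pi_!\tau^*$.

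The hard part will be the middle step: producing the $\Aff_1\F_q$-action on $\rho$ and, especially, verifying that the deck transformation $\delta_\lambda$ twists $\tau$ by precisely the automorphism of $\F_q$ that defines the weight grading on $H^*(\F_q)$ --- one must be careful to get the correct character (not its inverse or another exponent). The remaining ingredients --- compatibility of transfers with deck transformations and with coefficient extension, and the basic behaviour of the bar construction --- are already in use in the paper.
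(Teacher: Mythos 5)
Your proposal is correct, and it proves the lemma by a route that genuinely differs from the paper's in how the extra $\F_q^\times$-symmetry is exploited, even though both use the same key geometric input. You and the paper both extend the $\F_q$-action on $\rho$ to the action of $\Aff_1\F_q$ given by $\begin{bmatrix}1&c\\0&\lambda\end{bmatrix}\cdot v=\lambda^{-1}(v-cv_0)$. The paper then uses a push--pull diagram of bar constructions to show directly that $\pi_!\circ\tau^\ast$ \emph{factors} through the transfer $H^\ast(\F_q)\to H^\ast(\Aff_1\F_q)$, and hence (since the further transfer to $H^\ast(GL_2\F_q)$ is an iso) through $i_!$; this works over any coefficient field of characteristic $p$ with no diagonalization. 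You instead first reduce to $\F\supseteq\F_q$, identify $i^\ast i_!$ with projection onto $E_0$, and then show invariance of $\pi_!\tau^\ast$ under the deck transformations $\delta_\lambda$ to conclude that $\pi_!\tau^\ast$ kills $E_k$ for $k\neq 0$. Your extra coefficient-extension step is handled correctly, and the tradeoff is a cleaner and more concrete ``character/eigenspace'' argument at the cost of a slightly less general setup.

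One remark on your flagged worry at the end: the precise exponent and sign of the character by which $\psi_\lambda^\ast$ acts on $E_k$ actually does \emph{not} matter for the argument. Whatever the convention, conjugation by $\begin{bmatrix}1&0\\0&\lambda\end{bmatrix}$ acts on $E_k$ as multiplication by $\lambda^{\pm k}$, and once $\lambda$ is chosen to generate $\F_q^\times$, this scalar differs from $1$ for every $k\not\equiv 0$, which is all that is needed to deduce $\pi_!\tau^\ast|_{E_k}=0$ from $\pi_!\tau^\ast\psi_\lambda^\ast=\pi_!\tau^\ast$. (Alternatively, one could avoid the choice of generator altogether by averaging over $\lambda\in\F_q^\times$, since $\sum_\lambda\psi_\lambda^\ast=-\pr_{E_0}$.) So the step you identify as the ``hard part'' is robust, and the argument goes through.
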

\begin{proof}
Because $i_!\circ i^*=1$, it suffices to show that 
$\pi_!\circ\tau^*\colon H^*(\F_q)\to H^*(G)$
factors through $i_!\colon H^*(\F_q)\to H^*(GL_2\F_q)$.
To do so, it suffices to show that it factors through the transfer
$H^*(\F_q)\to H^*(\Aff_1\F_q)$,
because the further transfer 
$H^*(\Aff_1\F_q)\to H^*(GL_2\F_q)$ is an isomorphism 
by Remark~\ref{aff1}.
We have a commutative diagram
\[ \xymatrix{
	\pt\sslash G
	&
	\rho\sslash G\times \Aff_1\F_q
	\ar[l]_-{!}
	\ar[r]^-{\tau'}
	&
	\pt\sslash \Aff_1\F_q
	\\
	&
	\rho\sslash G\times\F_q
	\ar[ul]^-{\pi}_{!}
	\ar[u]^{\phi}_{!}
	\ar[r]^-{\tau}
	&
	\pt\sslash\F_q
	\ar[u]^{j}_{!}
} \]
where
$\begin{bmatrix}1&a\\0&b\end{bmatrix}\in\Aff_1\F_q$
acts on $\rho$ by $v\mapsto b^{-1}v-b^{-1}av_0$.
Here $\phi$ and $j$ are homotopy 
equivalent to finite covering maps via equivalences under which
the map $(\tau,\tau')$ corresponds to a map of
covering spaces inducing a bijection on the fibers.  Hence $(\tau,\tau')$ intertwines the two transfer maps, i.e.
\[ \phi_!\circ\tau^*={\tau'}^*\circ j_!. \]
This yields the desired factorization.
\end{proof}

\begin{proof}[Proof of Theorem~\ref{wedgeformula}]
Let $\mu\colon \F_q \times \F_q \to \F_q$ be the addition map,
and $i\colon\F_q\to GL_2\F_q$ the inclusion.
Consider the commutative diagram
\[ \xymatrix{
	\pt \sslash G\times H
	&
	\rho\vee\eta \sslash G\times H \times \F_q
	\ar[l]_-{!}^-{\pi'}
	\ar[r]_-{\tau'}
	&	
	\pt \sslash \F_q
	\ar[r]_-{i}
	&
	\pt\sslash GL_2\F_q
	\\
	&
	\ar[u]_-{\homot}^-\varphi
	\rho\times\eta\sslash G\times \F_q \times H\times \F_q
	\ar[ul]_-{!}^-{\pi}
	\ar[r]_-{\tau}
	&
	\pt \sslash \F_q\times\F_q
	\ar[u]^-{\mu}
} \]
Here $\varphi$ is induced by the quotient map 
$\rho\times\eta \to \rho \vee \eta$ and the addition map $\mu$.
The action of $(G\times \F_q) \times (H\times \F_q)$
on $\rho\times\eta$ is the product of those in
Proposition~\ref{prop:pointedrepformula}.
Notice that $\pi$ and $\tau$ are each a product of two projection maps.
Observe also that the kernel of $\mu$ 
acts freely on $\rho \times \eta$,
with quotient $\rho\vee\eta$. Thus the map $\varphi$ is a homotopy 
equivalence.

Applying Proposition~\ref{prop:pointedrepformula} 
to both $\rho$ and $\eta$ 
shows that the right hand side of 
equation~\eqref{eq:wedgeformula} agrees with
$-(\pi_!\circ\tau^\ast\circ (i\times i)^*)(\Delta\alpha)$.
Inserting 
\[ \Delta=(i\times i)_!\circ\mu^*\circ i^* \]
and applying Lemma~\ref{hatclassesagree}
to remove $(i\times i)^*(i\times i)_!$ yields
$-(\pi_!\circ\tau^\ast\circ\mu^*\circ i^*)(\alpha). $ 
The diagram  shows that this is equal to 
$-(\pi'_!\circ{\tau'}^\ast\circ i^*)(\alpha)$,
which by Proposition~\ref{prop:pointedrepformula}
agrees with the left hand side of equation~\eqref{eq:wedgeformula}.
\end{proof}

\section{The basic representations}\label{basicrep}
We now define a family of representations whose characteristic classes we shall be able to describe.
For any $\F_q$-vector space $V$, regard $V$ as an elementary abelian group under addition, and define its ``basic representation'' $\rho_V$ to be
\[ \rho_V=\F_q\oplus V^* \]
as a vector space, with $V$ acting as
\[ v\cdot(c,u)=(c+u(v),u). \]
In coordinates, it looks like
\[ \rho_{\F_q^n}:(a_1,\dots,a_n)\longmapsto\begin{bmatrix}1&a_1&\cdots&a_n\\&1\\&&\ddots\\&&&1\end{bmatrix}. \]
We equip $\rho_V$ with the basepoint $(1,0) \in \F_q \oplus V^*$.
If $V$ and $W$ are two vector spaces, we then have
\[ \rho_V\vee\rho_W=\rho_{V\oplus W}. \]
Therefore, in terms of a basis we have a decomposition
\[ \rho_{\F_q^n}=\rho_{\F_q}\vee\cdots\vee\rho_{\F_q} \]
as representations of $\F_q^n$.  Hence by Theorem~\ref{wedgeformula},
\[ \chi_\alpha(\rho_{\F_q^n})= (-1)^{n-1} \sum \chi_{\alpha_{(1)}}(\rho_{\F_q})\times\cdots\times\chi_{\alpha_{(n)}}(\rho_{\F_q}). \]
But $\rho_{\F_q}$ is the (2-dimensional) representation given by the inclusion of 
$\F_q$ as the Sylow $p$-subgroup of $GL_2\F_q$, by which we identified 
$H^*(GL_2\F_q)$ as a subspace of $H^*(\F_q)$.  Therefore by 
Theorem~\ref{thm:chiclassprops}.(\ref{thmpart:props-nontriviality}),
\[ \chi_\alpha(\rho_{\F_q^n})=\begin{cases}\alpha&\textrm{if }\deg\alpha>0,\\0&\textrm{if }\deg\alpha=0.\end{cases} \]
Consequently, $\chi_\alpha(\rho_{\F_q^n})$
is obtained from the iterated coproduct $\Delta^{n-1}(\alpha)$ by throwing away all those terms which have degree 0 in some factor.
Let us write the result explicitly, assuming that $\F$ extends $\F_q$.
To begin with, write the iterated coproduct as
\[ \Delta^{n-1}\left(x^Ay^B\right)=\sum_{\substack{A_1+\cdots+A_n=A\\B_1+\cdots+B_n=B\\P(A_i+B_i)\ \forall i}}\binom{B}{B_1;\cdots;B_n}
x^{A_1}y^{B_1}\otimes\cdots\otimes x^{A_n}y^{B_n}. \]
Here we have used the shorthand
\begin{equation}
\label{eq:multinomialshorthand}
\binom{B}{B_1;\cdots;B_n} = \binom{b_0}{b_{0,1},\dots,b_{0,n}}\cdots\binom{b_{r-1}}{b_{r-1,1},\ldots,b_{r-1,n}}, 
\end{equation}
where the terms on the right hand side are multinomial coefficients, and 
$B_i=(b_{0,i},\dots,b_{r-1,i})$.
Now,
\begin{align}\label{multinomialformula}
\chi_{x^Ay^B}(\rho_{\F_q^n})=(-1)^{n-1}\!\!\!\!\!\!\!\!\sum_{\substack{A_1+\cdots+A_n=A\\B_1+\cdots+B_n=B\\P(A_i+B_i)\ \forall i\\A_i+B_i\neq0\ \forall i}}\binom{B}{B_1;\cdots;B_n}
x^{A_1}y^{B_1}\times\cdots\times x^{A_n}y^{B_n}.
\end{align}

\section{Nontrivial classes}\label{nontriviality}
We now consider the question of which characteristic classes of the basic representation are nonzero.  In equation~\eqref{multinomialformula}, the monomials $x^{A_1}y^{B_1}\times\cdots\times x^{A_n}y^{B_n}$ in the sum are all linearly independent.  So
\[ \chi_{x^Ay^B}(\rho_{\F_q^n})\neq0 \]
if and only if there exists a nonzero term in the sum.
By iterating Lucas' theorem on the value of binomial coefficients mod $p$
(sse e.g.\ \cite{Fine}),
we see that the coefficient \eqref{eq:multinomialshorthand}
appearing in \eqref{multinomialformula} is nonzero mod $p$
precisely when
there is not a carry when adding together the numbers
$b_{j,1},\ldots, b_{j,n}$ in base $p$
for any $0 \leq j \leq r-1$.
It is now straightforward to check that by choosing
\[ A_i=(0,\dots,0) \+ B_i=p^{i-1}(p-1) \cdot (1,\dots,1), \]
we obtain a nonzero term appearing in a sum of type
\eqref{multinomialformula}, as we do by choosing
\[ A_1=(1,\dots,1) \+ B_1=(p-2)\cdot (1,\dots,1), \]
and for $i>1$ 
\[ A_i=(0,\dots,0) \+ B_i=p^{i-2}((p-2)p+1)\cdot (1,\dots,1). \]
Summing over $i$ from $1$ to $n$ to obtain $A$ and $B$, we have
the following result.
\begin{prop}\label{nonvanishingforbasicreps}
For p odd:
\begin{enumerate}[(i)]
\item \label{proppart:non-nilpotent}
	The class
	\[ \chi_\alpha(\rho_{\F_q^n})\in H^{2r(p^n-1)}(\F_q^n;\,\F_q) \]
    is non-nilpotent for 
    \[ 
    	\alpha =  (y_0\cdots y_{r-1})^{p^n-1}.
    \]
\item The class
	\[ \chi_\alpha(\rho_{\F_q^n})\in H^{r(2p^n-2p^{n-1}-1)}(\F_q^n;\,\F_q) \]
	is nonzero for
    \[
    	\alpha = x_0\cdots x_{r-1}(y_0\cdots y_{r-1})^{p^n-p^{n-1}-1}.
    \]
\end{enumerate}
Part (\ref{proppart:non-nilpotent})
is also valid for $p=2$, with the degree halved. \qed
\end{prop}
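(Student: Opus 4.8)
The plan is to read everything off from formula~\eqref{multinomialformula} and the observation recorded just above it that the external monomials $x^{A_1}y^{B_1}\times\cdots\times x^{A_n}y^{B_n}$ there are linearly independent in $H^\ast(\F_q^n;\F_q)$. Hence $\chi_{x^Ay^B}(\rho_{\F_q^n})$ is nonzero as soon as there exists one decomposition $(A_1,\dots,A_n;B_1,\dots,B_n)$ with $\sum_i A_i=A$, $\sum_i B_i=B$, $P(A_i+B_i)$ for all $i$, $A_i+B_i\neq0$ for all $i$, and with multinomial coefficient \eqref{eq:multinomialshorthand} nonzero mod $p$ --- equivalently, by iterated Lucas, with no carry when the base-$p$ digits $b_{j,1},\dots,b_{j,n}$ are summed, for each $j$. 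So I would proceed in three steps: (a) check that the decompositions exhibited just before the statement satisfy all these conditions, for each of the two classes $\alpha$; (b) record the degrees; (c) for part~(i), promote ``nonzero'' to ``non-nilpotent''.

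For step~(a) in part~(i) --- where $\alpha=(y_0\cdots y_{r-1})^{p^n-1}$, so $A=0$ and $B=(p^n-1)(1,\dots,1)$ --- I take $A_i=0$ and $B_i=p^{i-1}(p-1)(1,\dots,1)$: then $\sum_i B_i=(p-1)(1+p+\cdots+p^{n-1})(1,\dots,1)=(p^n-1)(1,\dots,1)=B$, each $A_i+B_i=p^{i-1}(p-1)(1,\dots,1)$ is nonzero with $\sum_k p^k\cdot p^{i-1}(p-1)=p^{i-1}(p^r-1)$ divisible by $q-1$, and for each $j$ the numbers $b_{j,i}=p^{i-1}(p-1)$ are the single base-$p$ digit $p-1$ in the pairwise distinct positions $i-1$, so no carry occurs in forming $b_j=p^n-1$. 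In part~(ii), which is only meaningful for $p$ odd (the factor $x_0\cdots x_{r-1}$ is absent from $H^\ast(\F_q)$ when $p=2$), we have $A=(1,\dots,1)$ and $B=(p^n-p^{n-1}-1)(1,\dots,1)$; I take $A_1=(1,\dots,1)$, $B_1=(p-2)(1,\dots,1)$ and, for $i\geq2$, $A_i=0$, $B_i=p^{i-2}(p-1)^2(1,\dots,1)$. Using $(p-1)^2=(p-2)p+1$ one gets $\sum_i B_i=(p^n-p^{n-1}-1)(1,\dots,1)=B$ and $\sum_i A_i=(1,\dots,1)=A$; the conditions $A_i+B_i\neq0$ and $P(A_i+B_i)$ are checked as in part~(i) (with $A_1+B_1=(p-1)(1,\dots,1)$, nonzero since $p\geq3$); and since $p^{i-2}(p-1)^2=p^{i-2}\bigl((p-2)p+1\bigr)$ has base-$p$ digit $1$ in position $i-2$ and $p-2$ in position $i-1$, adding $b_{j,1}=p-2$ together with the $b_{j,i}$ for $i\geq2$ column by column produces digit $p-1$ in positions $0,\dots,n-2$ and $p-2$ in position $n-1$, all strictly less than $p$, so no carry occurs and the sum is $b_j=p^n-p^{n-1}-1$.

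For step~(b): since $\chi_\alpha$ is assembled from pullback and transfer maps, which preserve cohomological degree, $\chi_\alpha(\rho_{\F_q^n})$ lies in degree $\deg\alpha$, which for $p$ odd is $2r(p^n-1)$ in part~(i) and $r+2r(p^n-p^{n-1}-1)=r(2p^n-2p^{n-1}-1)$ in part~(ii), and for $p=2$ is $r(2^n-1)$ in part~(i) (since then $\deg y_k=1$), i.e.\ half of $2r(p^n-1)$. For step~(c): in part~(i), $A=0$ forces $A_i=0$ for every $i$ in \emph{every} term of~\eqref{multinomialformula}, so $\chi_\alpha(\rho_{\F_q^n})$ lies in the subalgebra of $H^\ast(\F_q^n;\F_q)$ generated by the degree-$2$ classes (degree-$1$ if $p=2$); this subalgebra is a polynomial ring, hence an integral domain, and a nonzero element of an integral domain is non-nilpotent.

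I expect the only real work to be the base-$p$ digit bookkeeping of step~(a) in part~(ii), where the identity $(p-1)^2=(p-2)p+1$ carries all the arithmetic; everything else --- the degree count, the integral-domain argument for non-nilpotence, the ancillary checks that $A_i+B_i\neq0$ and $P(A_i+B_i)$ hold alongside the no-carry condition, and the check that $\alpha$ genuinely lies in $H^\ast(GL_2\F_q)$ via Lemma~\ref{GL2} (it does, since $\sum_k p^k(a_k+b_k)$ equals $(p^n-1)\tfrac{p^r-1}{p-1}$ in part~(i) and $p^{n-1}(p^r-1)$ in part~(ii), each a multiple of $p^r-1$) --- is routine.
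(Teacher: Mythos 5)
Your proposal is correct and is essentially the same argument the paper intends: the paper gives the decompositions $A_i,B_i$ just before the statement and declares the rest "straightforward to check" (the proposition has a $\qed$ with no proof body), and you simply carry out those checks — the arithmetic identities $\sum B_i=B$, the verification of $P(A_i+B_i)$ and $A_i+B_i\neq0$, the no-carry analysis via Lucas, the degree count, the membership check $\alpha\in H^\ast(GL_2\F_q)$, and the observation that when $A=0$ the class sits in the polynomial (hence integral-domain) subalgebra $\F_q[y_{j,i}]$ of $H^\ast(\F_q^n;\F_q)$, promoting ``nonzero'' to ``non-nilpotent''.
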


Now restrict to the case $r=1$, where we can give a complete description of which classes are nonzero.
\begin{defn}
For $m\in\bbN$, define $s_p(m)$ to be the sum of the digits of $m$ 
in $p$-ary notation.  That is,
\[ s_p(m)=\sum_k c_k \]
where
\[ m=\sum_k c_kp^k \quad\textrm{with}\quad 0\leq c_k<p\quad \forall k. \]
\end{defn}

First we give a bound for $m$ in terms of $s_p(m)$.
\begin{lemma}\label{sbound}
Let $s\in\bbN$. The lowest $m$ such that $s_p(m)=s$ is given by
\[ m=(d+1)p^c-1, \]
where $c$ and $d$ are determined by
\[ s=c(p-1)+d, \qquad 0\leq d<p-1. \]
\end{lemma}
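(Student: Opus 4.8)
The plan is to prove the two halves of the claim separately: first that the proposed value $m_0 := (d+1)p^c - 1$ actually has $p$-ary digit sum $s$, and second that every $m\in\bbN$ with $s_p(m) = s$ satisfies $m \geq m_0$. Note first that the constraint $s = c(p-1) + d$ with $0 \le d < p-1$ is nothing but Euclidean division of $s$ by $p-1$, so $c$ and $d$ are uniquely determined, and $d+1 \le p-1$. For the first half, one simply writes $m_0 = d\cdot p^c + (p^c-1)$ and observes that, since $0 \le d \le p-2$, this is already in reduced $p$-ary form: the digit in place $c$ is $d$, and the digits in places $0,\dots,c-1$ are all $p-1$. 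Hence $s_p(m_0) = d + c(p-1) = s$. This step is routine.

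For the second half — the actual content — I would argue the contrapositive: if $m < m_0$ then $s_p(m) \le s-1$. The only elementary input needed is that every $b$ with $0 \le b < p^c$ satisfies $s_p(b) \le c(p-1)$, with equality exactly when $b = p^c - 1$ (a $c$-digit number has $c$ digits each at most $p-1$). When $d = 0$ this already finishes it: $m \le m_0 - 1 = p^c - 2 < p^c$ is a $c$-digit number different from $p^c-1$, so $s_p(m) \le c(p-1) - 1 = s-1$. When $d \ge 1$, write $m = a\,p^c + b$ with $0 \le b < p^c$; since $m \le m_0 - 1 = (d+1)p^c - 2$ we get $a \le d \le p-2 < p$, so $a$ is a single $p$-ary digit and $s_p(m) = a + s_p(b)$. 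If $a \le d-1$, then $s_p(m) \le (d-1) + c(p-1) = s-1$. If $a = d$, then $d\,p^c + b \le (d+1)p^c - 2$ forces $b \le p^c - 2$, so $b$ is a $c$-digit number other than $p^c-1$, giving $s_p(m) \le d + \big(c(p-1)-1\big) = s-1$. In every case $s_p(m) \le s-1 < s$, as desired; combined with the first half this shows $m_0$ is the least $m$ with $s_p(m)=s$.

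I do not expect a genuine obstacle here: the argument is the standard principle that to minimize a number with prescribed $p$-ary digit sum one packs the digits into the least significant places. The only thing requiring care is the bookkeeping around the degenerate cases $c = 0$ and $d = 0$ (for instance, when $c=0$ the subcase $a=d\ge1$ is vacuous since $b<1$ forces $m=a=d>d-1$), and stating the auxiliary fact about digit sums of numbers below $p^c$ cleanly; so the write-up should be short.
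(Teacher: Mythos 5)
Your argument is correct, but it takes a genuinely different route from the paper's. The paper argues by perturbing an assumed minimizer: it asserts that the least $m$ with $s_p(m)=s$ must have $p$-ary form $d'$ followed by $c'$ digits equal to $p-1$, because otherwise one can decrease $m$ while preserving $s_p(m)$ by ``moving digits to the right'' (an exchange/greedy argument, stated briefly and somewhat informally). You instead directly verify that $m_0=(d+1)p^c-1$ has digit sum $s$ and then prove, by an explicit case analysis on the top digit of $m$, that every $m<m_0$ has $s_p(m)\le s-1$. The paper's version is shorter and more conceptual, but leaves the exchange step to the reader; yours is longer but fully self-contained, with the only ingredient being the elementary fact that $b<p^c$ implies $s_p(b)\le c(p-1)$ with equality only at $b=p^c-1$. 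Both are correct proofs of the same bound, and your handling of the degenerate cases $c=0$ and $d=0$ is sound.
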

\begin{proof}
The lowest $m$ must have a $p$-ary representation of the form
\[ d'.\underbrace{(p-1).\cdots.(p-1)}_{c'} \quad\textrm{with}\quad 0\leq d'<p-1  \]
since, if not, we may decrease $m$ while preserving $s_p(m)$ by ``moving digits to the right.''  For the digits to sum to $s$, we must have $d'=d$ and $c'=c$, which yields the claimed value for $m$.
\end{proof}

By Lemma~\ref{GL2} and the congruence
\[
	m \equiv s_p(m) \mod {p-1},
\]
the characteristic class $\chi_{y^m}$
is defined if and only if $s_p(m)$ is a multiple of $p-1$.
Similarly, the characteristic class
$\chi_{xy^m}$ is defined precisely when 
\[
	s_p(m) \equiv -1  \mod {p-1}.
\]
The following proposition determines in terms of $s_p(m)$
which of the classes $\chi_{y^m}(\rho_{\F_p^n})$ and 
$\chi_{xy^m}(\rho_{\F_p^{n}})$ are nonzero.

\begin{prop}\label{primecasenonvanishing}
Fix $n\geq1$.
\begin{enumerate}[(i)]
\item Let $p=2$.
For all $m\geq1$,
the class
\[ \chi_{y^m}(\rho_{\F_2^n})\in H^m(\F_2^n;\,\F_2) \]
is nonzero (and non-nilpotent) if and only if
\[ s_2(m)\geq n. \]
The lowest-dimensional such class occurs in degree $2^n-1$.
\item \label{proppart:r1non-nilpotent}
Let $p$ be odd.
The class
\[ \chi_{y^m}(\rho_{\F_p^n})\in H^{2m}(\F_p^n;\,\F_p) \]
is non-nilpotent if
\[ s_p(m)=k(p-1)\quad\textrm{with}\quad k\geq n. \]
The lowest-dimensional such class occurs in degree $2p^n-2$.
All other $\chi_{y^m}(\rho_{\F_p^n})$ are zero.
\item \label{proppart:r1nonzero}
Let $p$ be odd.
The class
\[ \chi_{xy^m}(\rho_{\F_p^n})\in H^{2m+1}(\F_p^n;\,\F_p) \]
is nonzero if 
\[ s_p(m)=k(p-1)-1\quad\textrm{with}\quad k\geq n. \]
The lowest-dimensional such class occurs in degree
$2p^n-2p^{n-1}-1$.
All other $\chi_{xy^m}(\rho_{\F_p^n})$ are zero.
\end{enumerate}
\end{prop}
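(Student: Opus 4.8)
The plan is to read everything off the explicit formula~\eqref{multinomialformula}, specialized to $q = p$ (so $r = 1$ and the multi-indices $A,B$ collapse to single numbers $A = a \in \{0,1\}$, $B = b \in \bbN$). For $\alpha = y^m$ the formula becomes
\[
	\chi_{y^m}(\rho_{\F_p^n})
	= (-1)^{n-1}\sum_{\substack{b_1+\cdots+b_n=m\\ (p-1)\mid b_i,\ b_i>0\ \forall i}} \binom{m}{b_1,\dots,b_n}\, y^{b_1}\times\cdots\times y^{b_n},
\]
and for $\alpha = xy^m$ it is the analogous sum, over a choice of the index $\ell$ carrying the exterior class together with a decomposition $m = b_1+\cdots+b_n$ in which $(p-1)\mid b_i$ and $b_i>0$ for $i\neq\ell$ while $b_\ell\equiv-1\bmod(p-1)$. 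Since $H^\ast(\F_p^n;\F_p)$ is, by K\"unneth, the $n$-fold tensor power of $H^\ast(\F_p;\F_p)=\Lambda(x)\otimes\F_p[y]$ (with $x,y$ in the $i$-th factor corresponding to the generators $e_i,t_i$), the cross-product monomials occurring in these sums are linearly independent. Hence each class is nonzero precisely when some term survives, i.e.\ when some coefficient $\binom{m}{b_1,\dots,b_n}$ is nonzero mod $p$; and, as recalled in Section~\ref{nontriviality}, this happens exactly when the base-$p$ addition $b_1+\cdots+b_n = m$ has no carries, equivalently when $s_p(m)=s_p(b_1)+\cdots+s_p(b_n)$.

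The substantive step is then a short combinatorial lemma: a decomposition $m = b_1+\cdots+b_n$ of the stated type exists if and only if $s_p(m)\geq n(p-1)$ in the $\chi_{y^m}$ case and $s_p(m)\geq n(p-1)-1$ in the $\chi_{xy^m}$ case. Necessity follows from $s_p(m)=\sum_i s_p(b_i)$ together with the bounds $s_p(b_i)\geq p-1$ for a positive $b_i$ divisible by $p-1$, and $s_p(b_\ell)\geq p-2$ for $b_\ell\equiv-1\bmod(p-1)$ (here $p$ is odd; for $p=2$ the divisibility constraint in the $\chi_{y^m}$ case is vacuous). For sufficiency I would view $m$ as a collection of $s_p(m)$ unit tokens, one for each unit in each base-$p$ digit of $m$, and note that splitting these into groups of prescribed sizes $n_1,\dots,n_n$ with $\sum n_i = s_p(m)$ yields $b_i$ with $s_p(b_i)=n_i$ and $b_1+\cdots+b_n = m$ without carries; taking $n_1=\cdots=n_{n-1}=p-1$ and $n_n$ the remainder works exactly under the stated bound. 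Since $\chi_{y^m}$ is defined iff $(p-1)\mid s_p(m)$ and $\chi_{xy^m}$ iff $s_p(m)\equiv-1\bmod(p-1)$, these bounds translate into $s_p(m)=k(p-1)$ with $k\geq n$, resp.\ $s_p(m)=k(p-1)-1$ with $k\geq n$; for $p=2$ the first reads simply $s_2(m)\geq n$. This gives the nonvanishing assertions in all three parts, the remaining classes being zero.

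To locate the lowest-dimensional nonzero classes, I would apply Lemma~\ref{sbound}: the minimal $m$ with $s_p(m)=n(p-1)$ is $p^n-1$ (parameters $c=n$, $d=0$), and the minimal $m$ with $s_p(m)=n(p-1)-1$ is $(p-1)p^{n-1}-1 = p^n-p^{n-1}-1$ (parameters $c=n-1$, $d=p-2$); moreover the formula in Lemma~\ref{sbound} shows that the minimal $m$ realizing a given digit sum is strictly increasing in that digit sum, so these are also the minimal $m$ meeting the inequality versions. Substituting into $\deg\chi_{y^m}=2m$ (or $m$ when $p=2$) and $\deg\chi_{xy^m}=2m+1$ produces the degrees $2p^n-2$, $2^n-1$, and $2p^n-2p^{n-1}-1$ asserted in the proposition.

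Finally, for the non-nilpotence claims, note that the surviving terms of $\chi_{y^m}(\rho_{\F_p^n})$ are monomials $t_1^{b_1}\cdots t_n^{b_n}$ lying in the polynomial subring $\F_p[t_1,\dots,t_n]\subset H^\ast(\F_p^n;\F_p)$, an integral domain, so any nonzero such class is automatically non-nilpotent; for $p=2$ the whole ring $H^\ast(\F_2^n;\F_2)=\F_2[t_1,\dots,t_n]$ is already a domain, which is why ``nonzero'' and ``non-nilpotent'' coincide there. No non-nilpotence is claimed, or true, for $\chi_{xy^m}$, all of whose terms carry an exterior generator and hence lie in the nilradical. I expect the only point requiring genuine care to be the reduction ``nonzero $\iff$ a term survives'': one must verify that distinct index data give distinct, hence linearly independent, basis monomials of $H^\ast(\F_p^n;\F_p)$ — in the $\chi_{xy^m}$ case, in particular, that the index $\ell$ of the exterior factor is recorded by the monomial, so contributions from different $\ell$ cannot cancel.
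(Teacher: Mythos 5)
Your proof is correct and follows essentially the same route as the paper: both read the result off equation~\eqref{multinomialformula}, reduce nonvanishing to the existence of a carry-free decomposition $m=b_1+\cdots+b_n$ whose parts satisfy the divisibility constraint, and translate that (via digit-sum bookkeeping of exactly your ``token'' kind, which the paper phrases as a multiset of prime-power factors of $xy^m$) into the numeric criterion on $s_p(m)$, finishing off the degree claims with Lemma~\ref{sbound}. One point you make explicit which the paper leaves implicit is that nonzero implies non-nilpotent for $\chi_{y^m}$ because these classes lie in the polynomial subring $\F_p[y_1,\dots,y_n]\subset H^\ast(\F_p^n;\,\F_p)$, an integral domain.
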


\begin{proof}
We will prove part (\ref{proppart:r1nonzero}); 
the other parts are quite similar except that $x$ does not appear.

By writing $m$ in $p$-ary notation, we can form
a multiset
\[ \{x,y,\dots,y,y^p,\dots,y^p,\dots,y^{p^k},\dots,y^{p^k}\}, \]
where each power of $p$ appears less than $p$ times, and the product of the elements 
is our class $xy^m$.  
The number of elements is $1+s_p(m)$. 

Now, assuming $1+s_p(m)=k(p-1)$ with $k\geq n$,
the multiset can be partitioned 
into a disjoint union of $n$ nonempty submultisets, 
each of which has a multiple of $p-1$ elements.  
Choosing such a partition yields a factorization
$xy^m=(x^{a_1}y^{b_1})\cdots(x^{a_n}y^{b_n})$
such that $a_i+b_i \equiv s_p(a_i+b_i) \equiv 0$ mod $p-1$
for all $i$  and
there is no carry when  the numbers
$b_1,\ldots,b_n$ are added together in base $p$.
In view of the analysis of the multinomial coefficients
at the beginning of the 
section, this shows the existence of a nonzero summand on the right hand side in equation~\eqref{multinomialformula}.

Conversely, suppose $\chi_{xy^m}(\rho_{\F_p^n})\neq0$.
Let $A_i$, $B_i$, $i=1,\ldots,n$, be the exponents appearing in
some nonzero summand in equation~\eqref{multinomialformula}
(with $A=1$, $B=m$).
The condition
\[ \binom{m}{B_1,\dots,B_n}\neq0\ \mod p \]
implies that
\[ s_p(m)=s_p(B_1)+\cdots+s_p(B_n). \]
But each $A_i+B_i>0$, and
\[ A_i+s_p(B_i) \equiv A_i+B_i\equiv  0 \mod (p-1) \]
by the condition $P(A_i+B_i)$.
Thus $A_i+s_p(B_i)$ is a positive multiple of $p-1$ for all $i$. 
Summing up,
\[ 1+s_p(m) = k(p-1) \]
for some $k \geq n$, as desired.

The statement about degrees follows by applying Lemma~\ref{sbound}.
\end{proof}

\section{Representations of elementary abelian groups}\label{elemab}

Let $G$ be a $p$-group, and 
let $M$ be a representation of $G$; 
we remind the reader of our convention that this means
a finite dimensional representation over the field $\F_q$
of characteristic $p$.
We define a filtration
\[ 
	0 = J_{-1}(M) \leq J_0(M) \leq J_1(M) \leq \cdots \leq M
\]
of $M$ by setting $J_{-1}(M) = 0$ and inductively defining 
$J_\ell (M)$ to be the preimage of the fixed-point subspace
 $(M/J_{\ell-1}(M))^G$ under the quotient map $M \to M/J_{\ell-1}(M)$.
In particular, $J_0(M) = M^G$.
More generally,
\[ J_i(M)=\set{v\in M}{I^{i+1}\cdot v=0} \]
where $I \subset \F_qG$ is the augmentation ideal of the 
group ring.
By Remark~\ref{rk:uppertriangularization}, every inclusion in the 
filtration is strict until $J_\ell(M)$ becomes equal to $M$, 
so $J_{\dim(M)-1}(M) = M$.
As the filtration is preserved by maps between representations, 
it follows in particular that every $d$-dimensional submodule of $M$ must
be contained in $J_{d-1}(M)$.

Now we consider the case where $G$ is elementary abelian.
\begin{thm}\label{J1}
Suppose $\xi$ is a representation of an elementary abelian $p$-group.  Then
\[ \chi_\alpha(\xi)=\chi_\alpha(J_1(\xi)). \]
\end{thm}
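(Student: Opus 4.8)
The plan is to work with the ``affine'' push--pull formula of Proposition~\ref{prop:pointedrepformula}, which replaces $GL_2\F_q$ by its Sylow subgroup $\F_q$ and is far better adapted to the filtration $J_\bullet$ than the $\Emb$-formula of Definition~\ref{def:chiclassdef2}. If $\deg\alpha=0$ both sides vanish by convention, so assume $\alpha\in H^{>0}(GL_2\F_q)$; we may also assume $J_1(\xi)\subsetneq\xi$, since otherwise the claim is trivial. As $\xi$ is a nonzero representation of a $p$-group over a field of characteristic $p$, it has a nonzero fixed vector $v_0\in\xi^G$, and $v_0\in J_0(\xi)\subseteq J_1(\xi)$. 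Since every inclusion in the $J$-filtration is strict until it reaches $\xi$ (Section~\ref{elemab}, via Remark~\ref{rk:uppertriangularization}), we get $\dim J_1(\xi)\geq\dim J_0(\xi)+1\geq 2$, so Proposition~\ref{prop:pointedrepformula} applies both to $(\xi,v_0)$ and to $(J_1(\xi),v_0)$; it gives
\[ \chi_\alpha(\xi)=-(\pi_!\circ\tau^\ast\circ i^\ast)(\alpha), \]
computed from the diagram with apex $\xi\sslash G\times\F_q$, where $G\times\F_q$ acts by $(g,c)\cdot v=gv-cv_0$, and likewise with $\xi$ replaced by $J_1(\xi)$.

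Next I would decompose $\xi=J_1(\xi)\sqcup\bigl(\xi\setminus J_1(\xi)\bigr)$ as $(G\times\F_q)$-sets, which is legitimate because $G$ preserves $J_1(\xi)$ and $v_0\in J_1(\xi)$. The apex splits accordingly as a disjoint union, so the transfer and the pullback split, and $\chi_\alpha(\xi)=-(T_1+T_2)$ where $T_1,T_2$ are the contributions of the two summands. The summand $J_1(\xi)\sslash G\times\F_q$ with its two maps is exactly the diagram computing $\chi_\alpha(J_1(\xi))$ via Proposition~\ref{prop:pointedrepformula}, so $T_1=-\chi_\alpha(J_1(\xi))$. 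It remains only to prove $T_2=0$, for then $\chi_\alpha(\xi)=-(-\chi_\alpha(J_1(\xi))+0)=\chi_\alpha(J_1(\xi))$.

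For $T_2$, consider the reduction map $q\colon\xi\setminus J_1(\xi)\to(\xi/J_1(\xi))\setminus 0$, $v\mapsto v+J_1(\xi)$. It is equivariant for the identity of $G\times\F_q$, with $G$ acting on $\xi/J_1(\xi)$ in the evident way and the $\F_q$-factor acting trivially (since $v_0\in J_1(\xi)$). Its fibres are the cosets of $J_1(\xi)$, each of cardinality $q^{\dim J_1(\xi)}$, which is divisible by $p$. Passing to homotopy orbits, $q$ induces a finite covering $\bar q$ of the same degree, so $\bar q_!\circ\bar q^\ast=0$ on mod-$p$ cohomology. Since the maps out of $(\xi\setminus J_1(\xi))\sslash G\times\F_q$ to $\pt\sslash G$ and $\pt\sslash\F_q$ are induced by the projections from $G\times\F_q$, they factor through $\bar q$; writing these factorizations as $\bar\pi\circ\bar q$ and $\bar\tau\circ\bar q$, we obtain $T_2=\bar\pi_!\bigl(\bar q_!\,\bar q^\ast\bigr)\bigl(\bar\tau^\ast(i^\ast\alpha)\bigr)=\bar\pi_!(0)=0$, as desired.

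The crux, and the reason for routing the argument through Proposition~\ref{prop:pointedrepformula} rather than Definition~\ref{def:chiclassdef2}, is exactly this last step: in the affine picture the fibres of $q$ are full cosets of $J_1(\xi)$ and hence uniformly of $p$-power cardinality, whereas the naive analogue on $\Emb(\F_q^2,\xi)$ has fibres over rank-one reductions of size $(q^{\dim J_1(\xi)}-1)\,q^{\dim J_1(\xi)}$, a unit times a $p$-power, which does not vanish mod $p$. Everything else is bookkeeping with transfers of disjoint-union coverings and with the functoriality of the $\sslash$-construction in the pair (acting group, space).
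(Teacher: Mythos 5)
Your opening moves are sound: the reduction to Proposition~\ref{prop:pointedrepformula}, the check that $\dim J_1(\xi)\geq 2$, the $(G\times\F_q)$-equivariant splitting $\xi=J_1(\xi)\sqcup(\xi\setminus J_1(\xi))$, and the identification $T_1=-\chi_\alpha(J_1(\xi))$ are all correct. The gap is in the claim $T_2=0$.

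You write $T_2=\bar\pi_!\bigl(\bar q_!\bar q^\ast\bigr)\bigl(\bar\tau^\ast i^\ast\alpha\bigr)$, which silently uses the factorization $(\pi_2)_!=\bar\pi_!\circ\bar q_!$. But $\bar\pi\colon B=((\xi/J_1(\xi))\setminus 0)\sslash(G\times\F_q)\to\pt\sslash G$ is \emph{not} a finite covering, so $\bar\pi_!$ does not exist: since $v_0\in J_1(\xi)$ the $\F_q$-factor acts trivially on $(\xi/J_1(\xi))\setminus 0$, so $B\simeq \bigl((\xi/J_1(\xi))\setminus 0\bigr)\sslash G\times B\F_q$ and $\bar\pi$ has $B\F_q$ in its fibres. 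The analogous step in the paper's Lemma~\ref{allfixedpoints} goes through precisely because there the quotient map $r\colon\rho\to\rho/L$ is chosen so that $GL_2\F_q$ still acts freely on the image $X\subset\Emb(\F_q^2,\rho/L)$; here, by contrast, passing from $\xi$ to $\xi/J_1(\xi)$ destroys the freeness of the $\F_q$-action, and the transfer argument breaks at exactly that point.

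This is not a fixable technicality: the conclusion $T_2=0$ is genuinely false without the elementary-abelian hypothesis, which your argument never uses. For instance, take $p=q=2$, $G=\Z/4$, and $\xi$ the $3$-dimensional Jordan block over $\F_2$. Decomposing $\Gr_2(\xi)$ into $\Z/4$-orbits (of sizes $1$, $2$, $4$) gives $\chi_\alpha(\xi)=\chi_\alpha(J_1(\xi))+\operatorname{tr}^{\Z/4}_{\Z/2}\bigl(\rho_V^\ast\alpha\bigr)$ for the orbit-$2$ plane $V$, and the correction term is nonzero (e.g.\ for $\alpha$ the degree-$1$ generator of $H^\ast(GL_2\F_2;\F_2)$, it contributes the nontrivial class in $H^1(\Z/4;\F_2)$, since $\operatorname{tr}^{\Z/4}_{\Z/2}$ is nonzero on $H^1$). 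So $\chi_\alpha(\xi)\neq\chi_\alpha(J_1(\xi))$ here. The paper's proof instead decomposes the covering $\Gr_2(\xi)\sslash G\to BG$ into $G$-orbits and uses that for $G$ \emph{elementary abelian} the transfer from every proper subgroup vanishes; the surviving $1$-sheeted components are exactly the $G$-invariant $2$-planes, which lie in $J_1(\xi)$. That vanishing of proper transfers is the actual crux, not the $p$-power cardinality of the fibres of $q$.
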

\noindent That is, only the subrepresentation $J_1(\xi)$ matters when calculating $\chi_\alpha(\xi)$.
\begin{proof}
Definition~\ref{def:chiclassdef2} involves the transfer associated to the covering
\[ \Emb(\F_q^2,\rho)\sslash G\times GL_2\F_q\homot\Gr_2(\rho)\sslash G
\to\pt\sslash G. \]
This decomposes as a sum of the transfer maps associated to each of the components
of the cover.  Since $G=\pi_1(\pt\sslash G)$ is elementary abelian, all such 
transfers vanish except for those associated to trivial (1-sheeted) components.  These correspond to the 2-planes 
$V\in\Gr_2(\rho)$
such that $g\cdot V=V$ for all $g$, i.e., the 2-dimensional subrepresentations of $\rho$.  Since all
such subrepresentations are contained in $J_1(\rho)$, the result is unchanged by replacing the
cover $\Gr_2(\rho)$ with its subspace $\Gr_2(J_1(\rho))$.
\end{proof}

The $J_\bullet$ filtration is compatible with external tensor products
of representations:
\begin{lemma}
\label{lm:jandtensor}
Let $\xi_1$ and $\xi_2$ be representations of $p$-groups $G_1$ and $G_2$,
respectively. Then
\[
	J_i(\xi_1\tensor \xi_2) = \sum_{a+b=i} J_a(\xi_1) \tensor J_b(\xi_2) \subset \xi_1\tensor\xi_2
\]
where the tensor products are external, i.e.,\ representations of $G_1\times G_2$.
\end{lemma}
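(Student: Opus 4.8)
The plan is to exploit the description of $J_i(M)$ in terms of the augmentation ideal, namely $J_i(M)=\set{v\in M}{I^{i+1}v=0}$, together with the fact that for a product group $G_1\times G_2$ the group ring decomposes as a tensor product. Write $I_1$, $I_2$ for the augmentation ideals of $\F_q G_1$ and $\F_q G_2$, and $I$ for that of $\F_q(G_1\times G_2)=\F_q G_1\tensor \F_q G_2$. The first step is to record the identity $I = I_1\tensor \F_q G_2 + \F_q G_1 \tensor I_2$, and hence, by multiplying out, $I^{i+1} = \sum_{a+b=i+1} I_1^{a}\tensor I_2^{b}$ (where $I_j^0=\F_q G_j$). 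From this one sees that $v\in \xi_1\tensor\xi_2$ lies in $J_i(\xi_1\tensor\xi_2)$ iff $(I_1^a\tensor I_2^b)\cdot v=0$ for every $a+b=i+1$.

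The second step is the easy inclusion $\supseteq$: if $v = v_1\tensor v_2$ with $v_1\in J_a(\xi_1)$, $v_2\in J_b(\xi_2)$ and $a+b=i$, then for any $a'+b'=i+1$ either $a'\geq a+1$ or $b'\geq b+1$, so $I_1^{a'}v_1=0$ or $I_2^{b'}v_2=0$, whence $(I_1^{a'}\tensor I_2^{b'})v=0$; extending linearly gives $\sum_{a+b=i}J_a(\xi_1)\tensor J_b(\xi_2)\subseteq J_i(\xi_1\tensor\xi_2)$.

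The third step — the harder direction $\subseteq$ — is where the main obstacle lies, since a general element of $J_i(\xi_1\tensor\xi_2)$ need not be a pure tensor. The cleanest approach I would take is to induct on $i$, using the inductive characterization $J_i(M)/J_{i-1}(M) = (M/J_{i-1}(M))^G$. Alternatively, and perhaps more transparently, one can argue directly: choose bases of $\xi_1$ and $\xi_2$ adapted to the filtrations $J_\bullet(\xi_1)$ and $J_\bullet(\xi_2)$ (possible because each $J_\ell$ is a subrepresentation and one can pick a basis refining the flag), and observe that the induced basis of $\xi_1\tensor\xi_2$ is filtered: the basis vector $e\tensor f$ with $e$ of $J$-level exactly $a$ and $f$ of $J$-level exactly $b$ should be shown to have $J$-level exactly $a+b$ in $\xi_1\tensor\xi_2$. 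Combined with step two (which shows the level is $\leq a+b$), the reverse bound level $\geq a+b$ would follow by exhibiting an element of $I_1^{a}\tensor I_2^{b}$ not annihilating $e\tensor f$ modulo lower filtration; this reduces to the single-variable statement that $I_1^{a}$ does not annihilate a level-$a$ vector, which is immediate from the definition of $J_{a-1}$. Counting dimensions on both sides then forces equality of the submodules.

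The expected main obstacle is making the ``level of $e\tensor f$ is exactly $a+b$'' claim precise and rigorous — one must be careful that the filtration levels genuinely add, which ultimately rests on the identity $I^{i+1}=\sum_{a+b=i+1}I_1^{a}\tensor I_2^{b}$ from step one plus the non-vanishing assertion; a slick way to finish is to note that both sides of the asserted formula are submodules of $\xi_1\tensor\xi_2$, that we have one inclusion, and that their dimensions agree because a filtered basis exhibits the dimension of $J_i(\xi_1\tensor\xi_2)$ as $\sum_{a+b\leq i}(\dim J_a(\xi_1)/J_{a-1}(\xi_1))(\dim J_b(\xi_2)/J_{b-1}(\xi_2))$, which is exactly $\dim\bigl(\sum_{a+b=i}J_a(\xi_1)\tensor J_b(\xi_2)\bigr)$.
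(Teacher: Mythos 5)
Your setup (the identity $I = I_1\tensor \F_qG_2 + \F_qG_1\tensor I_2$, hence $I^{i+1}=\sum_{a+b=i+1}I_1^a\tensor I_2^b$, and the easy inclusion $\supseteq$ for pure tensors) is correct and matches the paper's starting point. The paper also uses bases adapted to the filtrations of $\xi_1$ and $\xi_2$, so your basic strategy is on the right track. However, your ``slick'' finish via dimension counting has a genuine gap.

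The problematic claim is that ``a filtered basis exhibits the dimension of $J_i(\xi_1\tensor\xi_2)$ as $\sum_{a+b\leq i}(\dim J_a(\xi_1)/J_{a-1}(\xi_1))(\dim J_b(\xi_2)/J_{b-1}(\xi_2))$.'' You have indeed shown that each tensor basis vector $e\tensor f$ (with $e$ of level $a$ and $f$ of level $b$) has $J$-level exactly $a+b$ in $\xi_1\tensor\xi_2$, since $(I_1^a\tensor I_2^b)(e\tensor f)=(I_1^a e)\tensor(I_2^b f)\neq 0$. But knowing the level of each basis vector does \emph{not} by itself imply that the basis is adapted to the filtration $J_\bullet(\xi_1\tensor\xi_2)$, i.e., that $J_i$ is exactly the span of the basis vectors of level $\leq i$. (Compare: in $\F^2$ with $F_0=\langle e_1\rangle\subset F_1=\F^2$, the basis $\{e_1+e_2,\, e_1-e_2\}$ consists entirely of level-$1$ vectors, yet $F_0$ is not spanned by level-$0$ basis vectors.) Asserting that the tensor basis is adapted to $J_\bullet(\xi_1\tensor\xi_2)$ is exactly the content of the inclusion $\subseteq$ you need to prove, so the dimension count is circular. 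Nor does a global count help: having $\dim J_i\geq \dim\bigl(\sum_{a+b\leq i}\cdots\bigr)$ for all $i$ with equality at $i=-1$ and at $i$ large does not force equality at intermediate $i$.

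The paper closes this gap by inducting on $i$ and arguing directly about coefficients rather than dimensions: given $v\in J_i(\xi_1\tensor\xi_2)$ expanded in the tensor basis, one uses $(I_1\tensor 1)\cdot v\subset J_{i-1}(\xi_1\tensor\xi_2)$ together with the \emph{inductive} description of $J_{i-1}$ to conclude that the coefficient of each $w^{(2)}_{k',\ell'}$ must lie in $J_{\max(0,i-k')}(\xi_1)$, which kills the coefficients $\alpha_{k,\ell;k',\ell'}$ whenever $k+k'>i$ and $k>0$; the symmetric argument with $(1\tensor I_2)$ handles the case $k'>0$. This replaces your dimension count with an actual vanishing statement derived from the module structure. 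Your option (a) (inducting via $J_i(M)/J_{i-1}(M)=(M/J_{i-1}(M))^G$) is effectively what the paper does, and is the route you should pursue rather than the dimension count.
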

\begin{proof}
The claim clearly holds
for $i=-1$. Assume inductively that it holds for $i-1$. For $i\geq 0$,
the submodule $J_i(\xi_1\tensor \xi_2)$ consists precisely of those
vectors $v\in \xi_1\tensor \xi_2$ for which
$I\cdot v \subset J_{i-1}(\xi_1\tensor \xi_2)$,
where 
\[
	I 
	= 
	I_1 \tensor \F_q[G_2] + \F_q[G_1] \tensor I_2 
	\subset 
	\F_q[G_1\times G_2]
\]
is the augmentation ideal. Here $I_1$ and $I_2$ denote the 
augmentation ideals of $\F_q[G_1]$ and $\F_q[G_2]$, respectively.
Using this observation and the inductive assumption,
it is easily verified that 
$\sum_{a+b=i} J_a(\xi_1) \tensor J_b(\xi_2) \subset J_i(\xi_1\tensor \xi_2)$. 
To show the reverse containment, suppose 
$v \in J_i(\xi_1\tensor \xi_2)$. 
For $j=1,2$, working inductively on $k$,
we may find for each $k\geq 0$ an integer $d_k \geq 0$ and 
vectors  $w^{(j)}_{k,1},\ldots, w^{(j)}_{k,d_k}\in J_k(\xi_j)$
such that the vectors
\[
	w^{(j)}_{0,1},\ldots,w^{(j)}_{0,d_0};\,
	w^{(j)}_{1,1},\ldots,w^{(j)}_{1,d_1};\,
	\ldots;\,
	w^{(j)}_{k,1},\ldots,w^{(j)}_{k,d_k}	
\]
form a basis of $J_k(\xi_j)$.
We may now write $v$ uniquely as a sum
\[
	v
	= 
	\sum_{k,\ell, k',\ell'} 
	\alpha_{k,\ell;k',\ell'}^{}
	w^{(1)}_{k,\ell}\tensor w^{(2)}_{k',\ell'},
\]
for some $\alpha_{k,\ell;k',\ell'}\in \F_q$,
and the claim is that $\alpha_{k,\ell;k',\ell'} = 0$
whenever $k+k' > i$. 
Since $(I_1 \tensor 1) \cdot v \subset J_{i-1}(\xi_1\tensor \xi_2)$,
writing $v$ as 
\begin{equation}
\label{eq:vexp} 
	v
	= 
	\sum_{k',\ell'} 
	\bigg(
		\sum_{k,\ell}
		\alpha_{k,\ell;k',\ell'}^{}
		w^{(1)}_{k,\ell}
	\bigg)
	\tensor 
	w^{(2)}_{k',\ell'},
\end{equation}
and taking into account
the inductive assumption,
we see that the coefficient of $w^{(2)}_{k',\ell'}$ in \eqref{eq:vexp}
must be in 
$J_{\max(0,i-k')}(\xi_1)$.
Therefore 
$\alpha_{k,\ell;k',\ell'} = 0$
when both $k+k' > i$ and $k>0$. Similarly, 
$\alpha_{k,\ell;k',\ell'} = 0$
when both $k+k' > i$ and $k'>0$, proving the claim.
\end{proof}

Lemma~\ref{lm:jandtensor} implies in particular
\begin{lemma}\label{J1wedge}
Suppose $\rho$ and $\eta$ are representations of $p$-groups $G,G'$ 
such that the fixed-point subspaces of $\rho$ and $\eta$
are one-dimensional.  Then
\[ J_1(\rho\otimes\eta) = J_1(\rho)\vee J_1(\eta) \]
as representations of $G\times G'$. \qed
\end{lemma}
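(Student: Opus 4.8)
The plan is to deduce the lemma directly from Lemma~\ref{lm:jandtensor} by specializing to filtration degree $i=1$ and using the one-dimensionality hypothesis to turn the external tensor factors into spans of the basepoints. Write $v_0$ and $w_0$ for nonzero vectors spanning the fixed lines $\rho^G$ and $\eta^{G'}$; these are the basepoints with which we form the wedge sums, and note $v_0 \in \rho^G = J_0(\rho) \subseteq J_1(\rho)$ and likewise $w_0 \in J_1(\eta)$, so $J_1(\rho)$ and $J_1(\eta)$ are pointed and $J_1(\rho)\vee J_1(\eta)$ makes sense. Applying Lemma~\ref{lm:jandtensor} with $\xi_1 = \rho$, $\xi_2 = \eta$, and $i = 1$, the only surviving summands (since $J_{-1} = 0$) are $(a,b) = (1,0)$ and $(0,1)$, so
\[
	J_1(\rho \otimes \eta) = J_1(\rho) \otimes J_0(\eta) + J_0(\rho) \otimes J_1(\eta)
	= J_1(\rho) \otimes \langle w_0 \rangle + \langle v_0 \rangle \otimes J_1(\eta)
	\subseteq \rho \otimes \eta,
\]
where the second equality uses that $J_0(\rho) = \rho^G = \langle v_0 \rangle$ and $J_0(\eta) = \eta^{G'} = \langle w_0 \rangle$ are one-dimensional.

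Next I would identify this subspace with the abstract wedge sum. Consider the $\F_q$-linear map
\[
	\Psi \colon J_1(\rho) \oplus J_1(\eta) \longto \rho \otimes \eta,
	\qquad (a,b) \longmapsto a \otimes w_0 + v_0 \otimes b.
\]
Its image is exactly $J_1(\rho) \otimes \langle w_0 \rangle + \langle v_0 \rangle \otimes J_1(\eta) = J_1(\rho\otimes\eta)$. Computing in a basis of $\rho$ containing $v_0$ and a basis of $\eta$ containing $w_0$, one sees that $a\otimes w_0 = -v_0\otimes b$ forces $a \in \langle v_0\rangle$, $b \in \langle w_0\rangle$, and then $\ker\Psi = \langle (v_0,-w_0)\rangle$. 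Hence $\Psi$ descends to an isomorphism
\[
	J_1(\rho)\vee J_1(\eta) = \bigl(J_1(\rho)\oplus J_1(\eta)\bigr)\big/\langle v_0 - w_0\rangle
	\;\xrightarrow{\ \sim\ }\; J_1(\rho\otimes\eta).
\]
Finally I would check $(G\times G')$-equivariance: since $w_0$ is $G'$-fixed and $v_0$ is $G$-fixed, $(g,g')\cdot\bigl(a\otimes w_0 + v_0\otimes b\bigr) = ga\otimes w_0 + v_0\otimes g'b = \Psi(ga, g'b)$; and the basepoint $v_0 = w_0$ of the wedge sum is carried to the basepoint $v_0\otimes w_0$ of $\rho\otimes\eta$.

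There is no real obstacle: the entire content sits in Lemma~\ref{lm:jandtensor}, and everything else is elementary linear algebra. The only points needing a little care are (i) using the one-dimensionality hypothesis to recognize $J_0(\rho)$ and $J_0(\eta)$ as precisely the spans of the chosen basepoints --- which, as in the remark following Definition~\ref{def:pointedrep-wedgesum}, also makes the wedge sum independent of the scaling of the basepoints --- and (ii) the routine verification that $\ker\Psi = \langle v_0-w_0\rangle$, so that $\Psi$ realizes the defining quotient of the wedge sum.
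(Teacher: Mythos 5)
Your proposal is correct and takes essentially the same route as the paper: the paper treats Lemma~\ref{J1wedge} as an immediate consequence of Lemma~\ref{lm:jandtensor} (specialized to $i=1$), giving no further proof. You have correctly filled in the implicit linear algebra identifying $J_1(\rho)\otimes\langle w_0\rangle + \langle v_0\rangle\otimes J_1(\eta)$ with the abstract wedge sum via the map $\Psi$, including the kernel computation and the equivariance check.
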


\begin{cor}\label{tensorproductformula}
If $\rho$ and $\eta$ are representations of elementary abelian groups $G,G'$, then
\[ \chi_\alpha(\rho\otimes\eta)=-\sum \chi_{\alpha_{(1)}}(\rho)\times\chi_{\alpha_{(2)}}(\eta), \]
where
\[ \Delta(\alpha)=\sum \alpha_{(1)}\otimes\alpha_{(2)}. \]
\end{cor}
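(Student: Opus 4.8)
The plan is to reduce the tensor product formula of Corollary~\ref{tensorproductformula} to the wedge sum formula of Theorem~\ref{wedgeformula} via Theorem~\ref{J1}. First I would invoke the upper-triangularization principle of Remark~\ref{rk:uppertriangularization}: to compute $\chi_\alpha(\rho)$ and $\chi_\alpha(\eta)$ it suffices to work with their restrictions to Sylow $p$-subgroups, which are unipotent upper-triangular and hence have a fixed vector. So we may assume (after restricting $\rho$ and $\eta$ to the relevant $p$-subgroups, or more directly noting that the characteristic classes are unchanged by passing to any subgroup inclusion that is an injection in cohomology) that $\rho$ and $\eta$ each have nonzero fixed vectors, and in fact one-dimensional fixed-point spaces. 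The subtlety here is that a general representation of an elementary abelian group need not have a one-dimensional fixed space; but by Theorem~\ref{J1} we have $\chi_\alpha(\rho) = \chi_\alpha(J_1(\rho))$ and likewise for $\eta$ and for $\rho \otimes \eta$, so it is enough to prove the formula after replacing each representation by its $J_1$.

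The key step is then the chain of equalities
\[
	\chi_\alpha(\rho\otimes\eta)
	= \chi_\alpha(J_1(\rho\otimes\eta))
	= \chi_\alpha(J_1(\rho)\vee J_1(\eta))
	= -\sum \chi_{\alpha_{(1)}}(J_1(\rho))\times\chi_{\alpha_{(2)}}(J_1(\eta))
	= -\sum \chi_{\alpha_{(1)}}(\rho)\times\chi_{\alpha_{(2)}}(\eta).
\]
The first equality is Theorem~\ref{J1} applied to $\rho\otimes\eta$ as a representation of $G\times G'$. The second is Lemma~\ref{J1wedge}, which requires $\rho$ and $\eta$ to have one-dimensional fixed-point spaces; this is where I would need to handle the reduction carefully. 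The third equality is the wedge sum formula, Theorem~\ref{wedgeformula}, applied to the pointed representations $J_1(\rho)$ and $J_1(\eta)$ (pointed by their canonical basepoints, which span their one-dimensional fixed spaces). The last equality is again Theorem~\ref{J1}, applied separately to $\rho$ and $\eta$.

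The main obstacle is the one-dimensionality hypothesis needed to apply Lemma~\ref{J1wedge}: an arbitrary representation $\rho$ of an elementary abelian $p$-group has $\dim \rho^G$ possibly greater than one, so $J_1(\rho)\vee J_1(\eta)$ is not literally defined without a choice of basepoint, and Lemma~\ref{J1wedge} does not directly apply. I would resolve this by first restricting attention to a subgroup: since the characteristic classes are natural and since restriction to a Sylow $p$-subgroup (here the whole group, as it is a $p$-group) is already in play, I would instead replace $G$ by a well-chosen cyclic subgroup $C \leq G$, or more robustly, use that $\chi_\alpha$ only depends on the restriction of $\rho$ to some $C_p \leq G$ for which $\rho|_C$ still has the relevant nonvanishing behavior — but in fact the cleanest route is simply to observe that $J_1(\rho)$, as a module over the elementary abelian group, decomposes and the part contributing to $\chi_\alpha$ via Theorem~\ref{J1}'s proof (the trivial components of the Grassmannian cover) only sees the structure near the unique relevant fixed line. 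Concretely, I expect the paper handles this by noting that any representation with $\dim \rho^G \geq 2$ has $\chi_\alpha(\rho) = 0$ by Theorem~\ref{twofixedlines}, and similarly $\chi_\alpha(\eta) = 0$, and one checks $\rho\otimes\eta$ also has $\dim(\rho\otimes\eta)^{G\times G'} = \dim\rho^G \cdot \dim\eta^{G'} \geq 2$, so all terms vanish and the formula holds trivially; whereas when both fixed spaces are one-dimensional the chain of equalities above applies verbatim. That case split is the crux, and everything else is assembling results already proved.
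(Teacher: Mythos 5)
Your final paragraph correctly identifies the paper's argument: when $\dim\rho^G\geq 2$ or $\dim\eta^{G'}\geq 2$ the tensor product also has fixed-point space of dimension at least 2 (since $(\rho\otimes\eta)^{G\times G'}=\rho^G\otimes\eta^{G'}$), so all terms vanish by Theorem~\ref{twofixedlines}, and in the remaining case $\dim J_0(\rho)=\dim J_0(\eta)=1$ the chain of equalities via Theorem~\ref{J1}, Lemma~\ref{J1wedge}, and Theorem~\ref{wedgeformula} finishes the proof. The earlier detours --- restricting to Sylow $p$-subgroups (vacuous here since $G$ and $G'$ are already $p$-groups, so every representation automatically has a nonzero fixed vector) and passing to $J_1(\rho)$ to force one-dimensional fixed space (ineffective, since $J_0(J_1(\rho))=J_0(\rho)$) --- do not by themselves resolve the one-dimensionality requirement of Lemma~\ref{J1wedge}, but you correctly recognize this and land on the case split that the paper actually uses.
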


\begin{proof}
Both $\rho$ and $\eta$ must of course have at least a one-dimensional fixed-point space.  And if either of them has more than one fixed line, then so does the tensor product. Consequently, by Lemma~\ref{twofixedlines}, both sides of the equation vanish.
So we assume that $\dim J_0(\rho)=\dim J_0(\eta)=1$.  Now the result is immediate from Lemma~\ref{J1wedge}, Theorem~\ref{J1} and Theorem~\ref{wedgeformula}.
\end{proof}

Lastly, let us restrict our attention for a moment to the case $r=1$.  In that case, the calculation of $\chi_\alpha(\rho_A)$ in section~\ref{basicrep} actually suffices to compute $\chi_\alpha(\bullet)$ for all representations of elementary abelian $p$-groups.  Indeed:
\begin{thm}
Let $\xi:A\to GL_N\F_p$ be a representation of an elementary abelian $p$-group $A$.  Then
\[ \chi_\alpha(\xi)=\pi^*\chi_\alpha(\rho_{A/B}) \]
if $\dim J_0(\xi)=1$, and zero otherwise.
Here $B\leq A$ is the kernel of the action of $A$ on $J_1(\xi)$ and $\pi:A\to A/B$ is the projection.
\end{thm}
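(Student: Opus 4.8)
The plan is to reduce the statement about $\xi$ to the already-understood basic representation $\rho_{A/B}$ by passing through the subrepresentation $J_1(\xi)$. First I would dispose of the case $\dim J_0(\xi) > 1$: here $\xi$ has a trivial subrepresentation of dimension at least $2$, so $\chi_\alpha(\xi) = 0$ by Theorem~\ref{twofixedlines}. So assume $\dim J_0(\xi) = 1$. By Theorem~\ref{J1}, $\chi_\alpha(\xi) = \chi_\alpha(J_1(\xi))$, so it suffices to analyze the module $M := J_1(\xi)$, which by construction satisfies $I^2 \cdot M = 0$, where $I \subset \F_p A$ is the augmentation ideal, and has $\dim M^G = \dim M^A = 1$.

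The heart of the argument is to identify $M$, as a representation of $A$, with the pullback $\pi^\ast \rho_{A/B}$ along $\pi \colon A \to A/B$, where $B = \ker(A \to GL(M))$. Since $I^2 M = 0$, the action of $A$ on $M$ factors through the abelian quotient $A/B$, and $B$ is precisely the subgroup acting trivially on $M$, so $\bar A := A/B$ acts faithfully with $\bar I^2 M = 0$ where $\bar I \subset \F_p \bar A$. Thus $M = M^A \oplus N$ as $\F_p$-vector spaces, where for each $v \notin M^A$ the map $a \mapsto (a - 1)v \in M^A$ is $\F_p$-linear in $a$ (using $r = 1$, so $\F_q = \F_p$ and $\F_p$-linearity is automatic for the additive group $\bar A$) and $\bar A$-invariant in $v$; this exhibits an $\F_p$-linear injection $\bar A \incl \hom(N/?, M^A)$ — more precisely, choosing a splitting, $M$ becomes $\F_p \oplus (\bar A)^\ast$ with $\bar A$ acting by $a \cdot (c, u) = (c + u(a), u)$ after identifying the relevant quotient of $N$ with $\bar A$ via faithfulness. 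This is exactly the definition of the basic representation $\rho_{\bar A} = \rho_{A/B}$, equipped with its canonical basepoint $(1,0)$. Hence $M \cong \pi^\ast \rho_{A/B}$ as representations of $A$.

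Granting the identification, naturality of the characteristic classes with respect to group homomorphisms (the defining property, restated in Definition~\ref{def:chiclassdef1}) gives
\[
	\chi_\alpha(\xi) = \chi_\alpha(J_1(\xi)) = \chi_\alpha(\pi^\ast \rho_{A/B}) = \pi^\ast \chi_\alpha(\rho_{A/B}),
\]
which is the claim. The main obstacle is the structural identification $J_1(\xi) \cong \pi^\ast \rho_{A/B}$: one must check that $\bar A$ embeds into the dual of the ``non-fixed part'' of $M$ compatibly with the module structure, i.e.\ that faithfulness of the $\bar A$-action on the $I^2$-trivial module $M$ forces $M$ to be the pullback of a basic representation — equivalently that every $\F_p A$-module killed by $I^2$ with one-dimensional socle of invariants is, after dividing by the kernel of the action, a basic representation of that quotient. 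This is an elementary module-theoretic fact for $r = 1$ (it can fail for general $q$, which is why the hypothesis $r=1$ appears), and once it is in hand the cohomological part is immediate from the cited results.
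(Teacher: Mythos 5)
Your proposal is correct and follows essentially the same route as the paper: dispose of $\dim J_0(\xi)>1$ via Theorem~\ref{twofixedlines}, reduce to $J_1(\xi)$ via Theorem~\ref{J1}, pass to the faithful quotient $A/B$, and then identify the resulting $I^2$-trivial faithful module with the basic representation $\rho_{A/B}$ by exhibiting a perfect $\F_p$-bilinear pairing between $A/B$ and a complement of the fixed line, exactly as the paper does. The only difference is one of polish — the paper phrases the identification cleanly as a perfect bilinear form $\omega\colon A\times\xi_1\to J_0(\xi)$ and writes down the explicit isomorphism $(v_0,v_1)\mapsto(\theta(v_0),\theta\omega(-,v_1))$, whereas your sketch leaves the dualization step (``$\hom(N/?,M^A)$'') somewhat informal — and your remark that $\F_p$-linearity of $a\mapsto(a-1)v$ is automatic only for $r=1$ correctly pinpoints why the statement is restricted to prime fields.
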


\begin{proof}
Assume $J_0(\xi)$ is one-dimensional; otherwise $\chi_\alpha(\xi)=0$ 
by Theorem~\ref{twofixedlines}. By Theorem~\ref{J1}, the left hand side 
depends only on $J_1(\xi)$.  Since the same is true for the right hand side, 
we may assume that $\xi=J_1(\xi)$.
Then $B$ is the kernel of $\xi$, so
\[ \chi_\alpha(\xi)=\pi^*\chi_\alpha(\tilde\xi), \]
where $\tilde\xi$ is the corresponding representation of the quotient $A/B$,
a faithful representation.  Hence we have reduced to the case of a representation $\xi$ of $A$ which is faithful and satisfies 
$J_1(\xi)=\xi$ and $\dim J_0(\xi) = 1$.  In this case, the claim reads
\[ \chi_\alpha(\xi)=\chi_\alpha(\rho_A), \]
so the proof is completed by verifying that
under these conditions \[ \xi\iso\rho_A,\]
which we will now do. Write $\xi$ as an internal direct sum 
$\xi = J_0(\xi) \oplus \xi_1$ of vector spaces. 
Then, since the vectors in $J_0(\xi)$
are fixed in the $A$-action, and $\xi = J_1(\xi)$, the action of $A$
on $\xi$ is given for 
$a\in A, (v_0,v_1)\in J_0(\xi)\oplus \xi_1$ by the formula
\[
	a\cdot (v_0,v_1) = (v_0 + \omega(a,v_1), v_1)
\] 
for some bilinear map $\omega\colon A \times \xi_1 \to J_0(\xi)$.
Pick an isomorphism $\theta\colon J_0(\xi) \isom \F_p$.
Since the action of $A$ on $\xi$ is faithful, and 
nonzero vectors in $\xi_1$ are not fixed points,
the pairing $\theta\omega$ is perfect.
Now the map
\[
	\xi = J_0(\xi)\oplus \xi_1 \longto \rho_A = \F_p \oplus A^*,
	\quad
	(v_0,v_1) \longmapsto (\theta(v_0), \theta\omega(-,v_1))
\]
is an isomorphism of $A$-representations.
\end{proof}

\section{Cohomology of $GL_N\F_q$}\label{largerep}
In this section, we aim to prove nontriviality of 
$\chi_\alpha^{(N)}\in H^*(GL_N\F_q)$ for $N$ as large as possible.  
We do so by constructing a representation whose characteristic classes
will coincide with those studied in section~\ref{nontriviality} but whose
dimension is much larger:

\begin{lemma}\label{bigrep}
For each $n\geq1$, there is a representation $\xi_A$ of 
the elementary abelian group $A=\F_q^n$ such that
\[ \dim(\xi_A)=p^n \+ J_1(\xi_A)\iso\rho_A. \]
In the case $r=1$, $\xi_A$ may be taken to be the regular representation of $A$.
\end{lemma}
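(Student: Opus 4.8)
The plan is to construct $\xi_A$ concretely as a subrepresentation (or quotient representation) of the group ring $\F_q A$, which has dimension $q^n = p^{rn}$ — too big when $r>1$, so for general $r$ we must cut it down to dimension exactly $p^n$. First I would handle the case $r=1$, where the claim is that the regular representation $\F_p A$ works: since $A = \F_p^n$, the group ring is $\F_p[A] \cong \F_p[t_1,\dots,t_n]/(t_1^p,\dots,t_n^p)$ where $t_i = g_i - 1$ for a basis $g_1,\dots,g_n$ of $A$. The augmentation ideal is $I = (t_1,\dots,t_n)$, so $J_1(\F_p A) = \{v : I^2 v = 0\}$; computing in the monomial basis, $I^2$ is spanned by monomials of degree $\geq 2$, and one checks that the annihilator of $I^2$ is spanned by $1$ together with the top-degree socle element $t_1^{p-1}\cdots t_n^{p-1}$ and... wait — more carefully, $v$ is annihilated by every degree-$2$ monomial iff $v$ has no monomial component of degree $\leq \dim A - 2$ except the constant term; this needs the precise structure. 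The clean statement: $J_1(\F_p A)$ is spanned by $1$ and by $\{t_1^{p-1}\cdots \widehat{t_i^{?}} \cdots\}$ — I would instead argue representation-theoretically that $J_1$ of any faithful representation $M$ with $\dim M^A = 1$ decomposes as $J_0 \oplus (\text{linear part})$ and that for $M = \F_p A$ the linear part is exactly the $n$-dimensional space dual to $A$, giving $J_1(\F_p A) \cong \rho_A$ by the explicit isomorphism in the previous theorem.

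For general $r = $ (so $q = p^r$), the regular representation $\F_q A$ has dimension $q^n = p^{rn}$, which is wrong. Instead I would build $\xi_A$ of dimension exactly $p^n$ as follows: pick an $\F_p$-subspace $A_0 \leq A$ of $\F_p$-dimension $n$ that is an $\F_p$-form, i.e.\ $A_0 \otimes_{\F_p} \F_q = A$ — but $A$ is already $\F_q^n$; the subtlety is that as an abelian group $A \cong (\Z/p)^{rn}$. The construction I expect to work: let $\xi_A = \F_q \oplus A^*$ as a vector space (so $\dim_{\F_q} = 1 + n$, hence $\dim_{\F_p} = r(n+1)$) — no, that's also not $p^n$. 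So a genuinely new representation is required, and the natural candidate is $\xi_A = \F_q[\bar A]$ where $\bar A = A \otimes_{\F_q}(\text{something})$... Let me commit: take $\xi_A$ to be the quotient $\F_p[A]/\mathfrak{a}$ where $\mathfrak{a}$ is an $A$-submodule chosen so that $\dim = p^n$ and $J_1$ is preserved. Concretely, write $A = \bigoplus_{i=1}^{rn}\langle h_i\rangle$, set $u_i = h_i - 1 \in \F_p[A]$, so $\F_p[A] = \F_p[u_1,\dots,u_{rn}]/(u_i^p)$; then let $\xi_A = \F_p[u_1,\dots,u_{rn}]/\bigl((u_i^p) + \mathfrak{b}\bigr)$ where $\mathfrak{b}$ is generated by enough quadratic-and-higher relations to leave a space of dimension $p^n$ while keeping the degree-$\le 1$ part and one more generator of the socle intact — so that $J_1$ still equals the degree-$\leq 1$ part. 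The cleanest realization: $\xi_A := \F_p[A]\,\big/\,(\text{ideal generated by } u_iu_j \text{ for all } i,j, \text{ except along a chosen }\F_p^n\text{-sublattice})$.

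Here is the approach I will actually carry out. Choose an $\F_p$-linear surjection $\phi\colon A \twoheadrightarrow \F_p^n$ (e.g.\ "reduction" using the $\F_q$-structure: $A = \F_q^n$, and compose with a trace-like or coordinate $\F_p$-map $\F_q \to \F_p$). Pull back the regular representation: $\xi_A := \phi^*(\F_p[\F_p^n]) = \F_p[\F_p^n]$ viewed as an $A$-module via $\phi$. Then $\dim \xi_A = p^n$ as required. For the $J_1$ computation: $J_1$ depends only on the action of the augmentation ideal, and $\phi$ induces $I_A \to I_{\F_p^n}$ surjectively, so $J_1(\phi^* M) = \phi^{-1}$-pullback behaves well — precisely, $J_1(\phi^*(\F_p[\F_p^n]))$, as a vector space, equals $J_1(\F_p[\F_p^n])$ with $A$ acting through $\phi$. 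By the $r=1$ case this is $\rho_{\F_p^n}$ with $A$ acting through $\phi$; but $\rho_{\F_p^n} = \F_p \oplus (\F_p^n)^*$ with $\F_p^n$ acting via the evaluation pairing, and pulling back along $\phi\colon A \to \F_p^n$ gives $\F_q \oplus \phi^*((\F_p^n)^*) = \F_p \oplus (\text{image of } \phi^* \text{ in } A^*)$. This is \emph{not} $\rho_A$ unless the pairing $A \times \phi^*((\F_p^n)^*) \to \F_p$ is perfect, which fails since $\dim = n < rn = \dim_{\F_p} A$. \textbf{This is the main obstacle}, and it shows the pullback construction is too naive: $J_1(\xi_A)$ comes out $n+1$-dimensional but with a \emph{degenerate} pairing, hence is $\rho_{\F_p^n}$ pulled back, not $\rho_A$.

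\medskip

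\textbf{Revised plan (the one I would commit to).} To get $J_1(\xi_A) \cong \rho_A$ (an $r n + 1$-dimensional space with perfect pairing $A \times A^* \to \F_p$) while $\dim \xi_A = p^n$, I will take $\xi_A$ to be a \emph{twisted} group-ring-like module. Write $A = \F_q^n$; the group $A$ has $\F_p$-dimension $rn$, but I want a $p^n = p^{rn/r}$... so actually $\dim\xi_A = p^n$ while $\dim J_1 = rn+1$ forces $p^n \geq rn+1$, fine. The construction: let $V = \F_q^n = A$ (as $\F_q$-vector space), and set
\[
\xi_A := \Sym^{\leq p-1}_{\F_q}(V^*) \,\big/\, (\text{relations } y^p = 0) \;=\; \F_q[V^*]/(\text{$p$-th powers}),
\]
the truncated polynomial algebra on $V^* = A^*$, which has $\F_q$-dimension $p^n$ — \textbf{yes, $\dim_{\F_q}\xi_A = p^n$, but the problem wants $\dim\xi_A = p^n$ over $\F_q$, matching!} The $A$-action is by "translation": $v \in A = V$ acts on $f \in \F_q[V^*]$ as the automorphism induced by $y \mapsto y + y(v)\cdot 1$ for $y \in V^*$ (extend as algebra map). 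This is a well-defined $\F_q$-representation of $A$ of dimension $p^n$, it is faithful, and its augmentation-ideal powers are the powers of the ideal generated by the degree-$\geq 1$ part. Hence $J_1(\xi_A) = \F_q \cdot 1 \oplus V^*_{\deg 1} = \F_q \oplus A^*$, and the pairing $A \times A^* \to \F_q$ is the evaluation pairing, which is perfect over $\F_q$. By the explicit isomorphism in the preceding theorem (the one characterizing faithful $\xi$ with $J_1(\xi)=\xi$, $\dim J_0 = 1$, perfect pairing), this gives $J_1(\xi_A) \cong \rho_A$. Finally, when $r = 1$, I will check that this $\xi_A = \F_p[A^*]/(p\text{th powers}) \cong \F_p[A]$ (the regular representation) via the standard identification of a truncated polynomial algebra on $n$ generators over $\F_p$ with $\F_p[(\Z/p)^n]$ — under which the translation action corresponds to the regular action. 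The one step requiring care is verifying $J_1(\xi_A)$ is exactly the degree-$\leq 1$ part: I would show $I \cdot 1 = V^*_{\deg 1}$ and $I^2 \cdot \xi_A \supseteq$ everything in degrees $\leq p^n$-relevant, equivalently that $I^2$ annihilates \emph{nothing} in degree $\geq 2$ — which follows because the action of $v$ raises... lowers filtration degree by exactly one when paired suitably, a short computation with the binomial expansion of $(y + c)^k$.
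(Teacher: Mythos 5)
Your final (``revised plan'') construction is, up to a change of presentation, the same module the paper uses: the paper takes $\xi_{\F_q} = \sym^{p-1}(\F_q^2)$ and sets $\xi_{\F_q^n} = \xi_{\F_q}^{\otimes n}$, and dehomogenizing via $t=e_2/e_1$ identifies $\sym^{p-1}(\F_q^2)$ with the space of one-variable polynomials of degree $\leq p-1$ carrying the translation action $t\mapsto t+a$; the $n$-fold tensor power is then exactly the space of polynomials in $n$ variables of degree $\leq p-1$ in each, with the translation action of $\F_q^n$. However, your realization of this as the \emph{quotient ring} $\F_q[V^*]/(y^p: y\in V^*)$ with a translation action ``induced'' as an algebra map is not well defined: the ideal $(y_1^p,\dots,y_n^p)$ is not translation-invariant, since $y_i^p\mapsto (y_i+c_i)^p = y_i^p+c_i^p$ and the nonzero constant $c_i^p$ is not in that ideal. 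So the algebra automorphism $y_i\mapsto y_i+c_i$ of $\F_q[V^*]$ does not descend to the quotient, and the claim that this is ``a well-defined $\F_q$-representation'' fails as stated. The fix is to define $\xi_A$ not as a quotient but as the \emph{subspace} of $\F_q[V^*]$ consisting of polynomials of degree $\leq p-1$ in each variable; this subspace (which is not a subalgebra) is translation-invariant, is linearly isomorphic to the truncated polynomial algebra you wrote, and recovers $\sym^{p-1}(\F_q^2)^{\otimes n}$. This subspace-vs-quotient conflation is precisely the subtlety the paper's $\sym^{p-1}$ formulation sidesteps.

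Once that is repaired, your $J_1$-computation (degree-$\leq1$ part, with the evaluation pairing $A\times A^*\to\F_q$) is correct, and $J_1(\xi_A)\cong\rho_A$ follows; note you do not actually need the theorem about faithful representations that you cite (which the paper only states for $r=1$), because your explicit description $J_1(\xi_A)=\F_q\cdot 1\oplus A^*$ with translation action \emph{is} the definition of $\rho_A$. The paper avoids both of these issues by reducing to $n=1$ via the tensor-product formula $J_1(\rho\otimes\eta)=J_1(\rho)\vee J_1(\eta)$ of Lemma~\ref{J1wedge}, computing the $J_\bullet$-filtration of the single $p$-dimensional module $\sym^{p-1}(\F_q^2)$, and then verifying the $r=1$ claim by a Jordan-form comparison rather than by the truncated-polynomial-algebra identification; that last step is worth imitating, since the ring isomorphism $\F_p[A]\cong\F_p[u_1,\dots,u_n]/(u_i^p)$ intertwines the \emph{regular} (multiplication) action with multiplication by $(1+u_i)$, not with the translation action, so the two $A$-modules should be matched by Jordan structure rather than by the ring identification you proposed.
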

\begin{proof} 
In light of Lemma~\ref{J1wedge} and the fact that $\rho_V\vee\rho_W=\rho_{V\oplus W}$, we may immediately reduce to the case $n=1$ by defining
\[ \xi_{\F_q^n}=\underbrace{\xi_{\F_q}\otimes\cdots\otimes\xi_{\F_q}}_n. \]
In the case $n=1$, we may take
\[ \xi_{\F_q}=\sym^{p-1}(\F_q^2), \]
a $p$-dimensional representation of 
\[
	\F_q = \begin{bmatrix}1&*\\&1\end{bmatrix} \leq GL_2\F_q.
\]
In terms of the basis $\F_q^2=\langle e_1,e_2\rangle$,
\[ \xi_{\F_q}=\langle e_1^{p-1},e_1^{p-2}e_2,\dots,e_2^{p-1}\rangle, \]
and we have
\[ J_i(\xi_{\F_q})=\langle e_1^{p-1-j}e_2^j \mid j\leq i\rangle. \]
In particular, the action on $J_1(\xi_{\F_q})$ is the standard one
\[ \rho_{\F_q}:a\longmapsto\begin{bmatrix}1&a\\&1\end{bmatrix}. \]

Now assume $r=1$.  We will check that $\xi_{\F_p^n}$ 
as defined above is isomorphic to the regular representation of 
$\F_p^n$.
Since both representations split up as an $n$-fold tensor product, 
it suffices to prove this in the case $n=1$.  
In that case, we need to check that the two endomorphism of order 
$p$ have the same Jordan structure; 
notice that such an endomorphism satisfies the polynomial
$x^p-1 = (x-1)^p$, so all its eigenvalues are equal to 1,
and it will have a Jordan form over $\F_p$ without needing to 
extend the field.
Now observe that both endomorphisms have a single fixed line, hence a single block.
\end{proof}

Using the characteristic classes of the representation in Lemma~\ref{bigrep},
we can now produce many nonzero cohomology classes on the general linear groups:
\begin{thm}\label{nonvanishing}
Fix $n\geq1$.  For all \[ 2\leq N\leq p^n, \]
\begin{enumerate}[(i)]
\item\label{thmpart:nonvanishingnilpotent} 
The class 
\[
	\chi^{(N)}_\alpha \in H^\ast(GL_N\F_q;\,\F_q)
	\quad\text{with}\quad
	\alpha = (y_0\cdots y_{r-1})^{p^n-1}
\]
is a non-nilpotent element of degree $2r(p^n-1)$ for $p$ odd, 
or degree $r(2^n-1)$ for $p=2$.
\item 
For $p$ odd, the class
\[
	\chi^{(N)}_\alpha \in H^\ast(GL_N\F_q;\,\F_q)
	\quad\text{with}\quad
	\alpha = x_0\cdots x_{r-1}(y_0\cdots y_{r-1})^{p^n-p^{n-1}-1}
\]
is a nonzero element of degree $r(2p^n-2p^{n-1}-1)$.
\end{enumerate}
\end{thm}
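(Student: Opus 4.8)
The plan is to deduce the theorem from the nonvanishing results for the basic representations (Proposition~\ref{nonvanishingforbasicreps}), by transplanting those classes onto a large representation via Lemma~\ref{bigrep} and Theorem~\ref{J1}, and then spreading them across all $2\le N\le p^n$ using parabolic induction.

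First I would fix $A=\F_q^n$ and take the representation $\xi_A$ of $A$ supplied by Lemma~\ref{bigrep}, so that $\dim\xi_A=p^n$ and $J_1(\xi_A)\iso\rho_A=\rho_{\F_q^n}$. By the reduction-to-$J_1$ property (Theorem~\ref{J1}), $\chi_\alpha(\xi_A)=\chi_\alpha(J_1(\xi_A))=\chi_\alpha(\rho_{\F_q^n})$ for every $\alpha\in H^\ast(GL_2\F_q;\F_q)$. For the two choices of $\alpha$ named in the statement, Proposition~\ref{nonvanishingforbasicreps} supplies exactly what is needed: with $\alpha=(y_0\cdots y_{r-1})^{p^n-1}$ the class $\chi_\alpha(\rho_{\F_q^n})$ is non-nilpotent (part (i), valid also for $p=2$ with the degree halved), and with $\alpha=x_0\cdots x_{r-1}(y_0\cdots y_{r-1})^{p^n-p^{n-1}-1}$ it is nonzero (part (ii)). That these monomials actually lie in $H^\ast(GL_2\F_q;\F_q)$ is the divisibility condition of Lemma~\ref{GL2}, already built into Proposition~\ref{nonvanishingforbasicreps}: in both cases $a_k+b_k$ equals $p^n-1$, respectively $p^{n-1}(p-1)$, for all $k$, and $\sum_k p^k(a_k+b_k)$ is then a multiple of $q-1$.

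Next I would apply naturality. Viewing $\xi_A$ as a homomorphism $\xi_A\colon A\to GL_{p^n}\F_q$, Definition~\ref{def:chiclassdef1} gives $\chi_\alpha(\xi_A)=\xi_A^\ast(\chi_\alpha^{(p^n)})$. Since a ring homomorphism sends $0$ to $0$ and nilpotents to nilpotents, it follows that $\chi_\alpha^{(p^n)}\in H^\ast(GL_{p^n}\F_q;\F_q)$ is nonzero in case (ii) and non-nilpotent in case (i). Then Corollary~\ref{cor:parindclasses} propagates nonvanishing and non-nilpotence downward along the parabolic induction maps, so $\chi_\alpha^{(N)}$ has the asserted property for every $2\le N\le p^n$. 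The degrees are just $\deg\alpha$, since transfers and induced maps preserve degree: with $\deg x_i=1$ and $\deg y_i=2$ (equal to $1$ when $p=2$) one obtains $2r(p^n-1)$, or $r(2^n-1)$ for $p=2$, in case (i), and $r+2r(p^n-p^{n-1}-1)=r(2p^n-2p^{n-1}-1)$ in case (ii).

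Because the genuinely hard inputs — the construction of $\xi_A$, the $J_1$-reduction, and the combinatorial nonvanishing for $\rho_{\F_q^n}$ — are already established, this proof is a short assembly with no real obstacle. The only points that deserve care are keeping the coefficient field equal to $\F_q$ throughout (so that Lemma~\ref{GL2} and Proposition~\ref{nonvanishingforbasicreps} apply verbatim), and noting that non-nilpotence survives the pullback $\xi_A^\ast$ because ring homomorphisms preserve nilpotence, and survives parabolic induction by the Steenrod-power argument already contained in Corollary~\ref{cor:parindclasses}.
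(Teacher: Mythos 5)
Your proof is correct and follows the same route as the paper: combine Lemma~\ref{bigrep}, Theorem~\ref{J1}, and Proposition~\ref{nonvanishingforbasicreps} to handle the case $N=p^n$, then propagate to all $2\le N\le p^n$ via Corollary~\ref{cor:parindclasses}. The extra detail you supply (the naturality/pullback step, the divisibility check, the degree count) is exactly what the paper's terse proof leaves implicit, and it is all accurate.
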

\begin{proof}
In the special case $N=p^n$, we need only combine Lemma~\ref{bigrep} with 
Theorem~\ref{J1} and Proposition~\ref{nonvanishingforbasicreps}. 
The general case now follows from Corollary~\ref{cor:parindclasses}.
\end{proof}

\begin{remark}
The universal classes $\chi_\alpha^{(n)}$ are not the only nonzero
cohomology classes on $GL_N\F_q$ which can be detected
using our characteristic classes: there are also ``dual classes''
\[ \chi_\alpha((\F_q^n)^*)=\chi_\alpha(T:GL_n\to GL_n)=T^*(\chi_\alpha^{(n)}), \]
where \[ T:GL_n\F_q\to GL_n\F_q \]
is the inverse-transpose automorphism $A\mapsto (A\inv)^t$.  These dual classes are in general distinct from the $\chi_\alpha^{(n)}$ classes.  Indeed, for $n>2$, let $V=\F_q^{n-1}$ and consider the dual $\nu=\rho_V^*$ of the basic representation.  Then
\[ \nu^*\chi_\alpha^{(n)}=\chi_\alpha(\nu)=0 \]
for all $\alpha$ by Theorem~\ref{twofixedlines}, while
\[ \nu^*T^*\chi_\alpha^{(n)}=\chi_\alpha(\nu^*)=\chi_\alpha(\rho_V) \]
which is often nonzero, as we saw in section~\ref{nontriviality}.
\end{remark}

In the case $r=1$, we have from Proposition~\ref{primecasenonvanishing} (along with Theorem~\ref{J1}, Lemma~\ref{bigrep}, and Corollary~\ref{cor:parindclasses}):
\begin{thm}\label{r1nonvanishing}
Fix $n\geq1$.  For all \[ 2\leq N\leq p^n, \]
\begin{enumerate}[(i)]
\item If $p=2$, the class 
\[
	\chi_{y^d}^{(N)} \in H^d(GL_N\F_2;\,\F_2)
\]
is a non-nilpotent element whenever the sum of the binary digits of $d$ is at least $n$.
\item If $p$ is odd, the class
\[
	\chi_{y^d}^{(N)} \in H^{2d}(GL_N\F_p;\,\F_p)
\] 
is a non-nilpotent element whenever the sum of the $p$-ary digits of $d$ is $k(p-1)$ for some $k\geq n$.
\item If $p$ is odd, the class
\[
	\chi_{xy^d}^{(N)} \in H^{2d+1}(GL_N\F_p;\,\F_p)
\]
is a nonzero element whenever the sum of the $p$-ary digits of $d$ is $k(p-1)-1$ for some $k\geq n$.
\qed
\end{enumerate}
\end{thm}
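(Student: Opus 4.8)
The plan is to deduce Theorem~\ref{r1nonvanishing} from the already-established results about the basic representations and the parabolic induction machinery, exactly in the manner the parenthetical remark in the excerpt suggests. First I would treat the critical case $N = p^n$. Apply Lemma~\ref{bigrep} to obtain a representation $\xi_A$ of $A = \F_p^n$ (the regular representation, since $r=1$) with $\dim \xi_A = p^n$ and $J_1(\xi_A) \iso \rho_A = \rho_{\F_p^n}$. Then by Theorem~\ref{J1}, $\chi_\alpha(\xi_A) = \chi_\alpha(J_1(\xi_A)) = \chi_\alpha(\rho_{\F_p^n})$ for every $\alpha \in H^{>0}(GL_2\F_p)$. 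Proposition~\ref{primecasenonvanishing} tells us exactly which of the classes $\chi_{y^d}(\rho_{\F_p^n})$ and $\chi_{xy^d}(\rho_{\F_p^n})$ are nonzero, respectively non-nilpotent, in terms of the $p$-ary digit sum $s_p(d)$: nonzero/non-nilpotent precisely when $s_p(d) \geq n$ for $p=2$, when $s_p(d) = k(p-1)$ with $k \geq n$ for the $y^d$-classes with $p$ odd, and when $s_p(d) = k(p-1)-1$ with $k \geq n$ for the $xy^d$-classes.

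Next I would transport this information to $GL_N\F_p$. The representation $\xi_A$ is classified by a homomorphism $\xi_A \colon A \to GL_{p^n}\F_p$, and by naturality (Definition~\ref{def:chiclassdef1}) we have $\xi_A^\ast(\chi_\alpha^{(p^n)}) = \chi_\alpha(\xi_A) = \chi_\alpha(\rho_{\F_p^n})$. Hence whenever $\chi_\alpha(\rho_{\F_p^n})$ is nonzero, the universal class $\chi_\alpha^{(p^n)} \in H^\ast(GL_{p^n}\F_p)$ must be nonzero as well; and whenever $\chi_\alpha(\rho_{\F_p^n})$ is non-nilpotent, so is $\chi_\alpha^{(p^n)}$, since the pullback of a nilpotent class would be nilpotent. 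This settles the case $N = p^n$ for all three statements, with the degrees as recorded in Proposition~\ref{primecasenonvanishing}: the $y^d$-class lives in degree $2d$ (degree $d$ for $p=2$) and the $xy^d$-class in degree $2d+1$.

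Finally I would descend from $N = p^n$ to all $2 \leq N \leq p^n$ using Corollary~\ref{cor:parindclasses}: if $\chi_\alpha^{(n')}$ is nonzero (resp.\ non-nilpotent), then so is $\chi_\alpha^{(m)}$ for every $2 \leq m \leq n'$, because the parabolic induction maps $\Phi_{m,n'}$ send $\chi_\alpha^{(m)}$ to $\chi_\alpha^{(n')}$ (the Corollary preceding Corollary~\ref{cor:parindclasses}) and commute with the Steenrod operations used to detect non-nilpotence. Applying this with $n' = p^n$ gives the full range of $N$. This completes the proof, and indeed essentially no new work is required beyond assembling these four ingredients.

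I expect the only mild subtlety to be bookkeeping: one must check that the hypotheses on $d$ in the theorem (``sum of $p$-ary digits is at least $n$'', etc.) are literally the hypotheses appearing in Proposition~\ref{primecasenonvanishing}, and that the degree formulas match after the $r=1$ specialization. There is no genuine obstacle — the substantive content has already been proved in Sections~\ref{nontriviality}, \ref{elemab}, and \ref{largerep}, together with the parabolic induction compatibility of Section~\ref{parabolicinduction} — so the ``proof'' is really just the invocation of these results in sequence, which is why it can be stated in a single line.
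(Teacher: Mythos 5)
Your proposal is correct and coincides with the paper's intended argument: the paper itself states Theorem~\ref{r1nonvanishing} with only the parenthetical ``from Proposition~\ref{primecasenonvanishing} (along with Theorem~\ref{J1}, Lemma~\ref{bigrep}, and Corollary~\ref{cor:parindclasses})'' and gives no further proof, which is exactly the chain of reductions you spell out. Your expansion of that one-line citation — treating $N=p^n$ first via the regular representation, pulling back along $\xi_A$, and then descending to $2\le N\le p^n$ by parabolic induction — is precisely the assembly the authors have in mind.
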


\section{Dickson invariants}\label{Dickson}

For this section, we restrict to the case $r=1$ and assume $\F=\F_p$.
We give an alternative description of the  classes
\[ \chi_{y^k}(\rho_{\F_p^n})\in H^{\ast}(\F_p^n). \]
By equation~\eqref{multinomialformula}, these classes 
belong to the polynomial subalgebra 
\[
	\F_p[{\F_p^n}^*] \subset H^\ast(\F_p^n),
\]
where we interpret ${\F_p^n}^*$ as $H^1(\F_p)$ if $p=2$ or as
the image of the Bockstein map $\beta:H^1(\F_p^n)\to H^2(\F_p^n)$
if $p$ is odd.
Because the image of $\rho_{\F_p^n}$ in $GL_{n+1}\F_p$ is normalized by a copy of 
$GL_n\F_p$, these classes live in the invariant subring
\[ 
	\F_p[{\F_p^n}^*]^{GL_n\F_p} = \F_p[ D_{p^n-p^{n-1}},\ldots, D_{p^n-1}]
\]
where the elements $D_{p^n-p^i}$ are the Dickson invariants
of ${\F_p^n}^*$. Our aim is to express the classes 
$\chi_{y^k}(\rho_{\F_p^n})$ as polynomials in the Dickson invariants.

We begin with an alternative, coordinate-independent description
of the classes $\chi_{y^k}(\rho_{\F_p^n})$. 
So far, we have defined these classes when $k$ is a multiple of $p-1$.
It is convenient to extend the definition by setting 
$\chi_{y^k} = 0$ when $p-1$ does not divide $k$. Then:
\begin{lemma}
For every $k >0$, we have
\[ \chi_{y^k}(\rho_{\F_p^n})=-\sum_{z\in{{\F_p^n}^*}}z^k. \]
\end{lemma}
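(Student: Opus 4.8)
The plan is to unwind the wedge-sum description of $\chi_{y^k}(\rho_{\F_p^n})$ from equation~\eqref{multinomialformula} and recognize the resulting sum as a power sum over the dual space. First I would recall that, by the computation in section~\ref{basicrep}, $\chi_{y^k}(\rho_{\F_p^n})$ is obtained from the iterated coproduct $\Delta^{n-1}(y^k)$ by discarding all tensor factors of degree zero, and that the relevant coproduct on $H^\ast(\F_p)=\F_p[y]$ (before imposing the divisibility condition $P$) is just the binomial expansion coming from $y$ being primitive in $H^\ast(\F_p)$. So the natural move is to work one wedge factor at a time: by Theorem~\ref{wedgeformula} and induction on $n$, it suffices to understand $\chi_{y^k}(\rho_{\F_p})$ together with how the coproduct distributes $y^k$ across the $n$ factors.

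The cleanest route is probably to package the power sums into a generating-function identity. Write $H^\ast(\F_p^n)\supset \F_p[{\F_p^n}^\ast]$ and, for $z\in{\F_p^n}^\ast$, think of $z$ as the corresponding degree-$2$ (or degree-$1$, if $p=2$) class. For the single factor $\rho_{\F_p}$ with fundamental class $y$, one checks that $\chi_{y^k}(\rho_{\F_p})$ equals $y^k$ if $(p-1)\mid k$ and $k>0$, and $0$ otherwise; but one also has the elementary identity $\sum_{c\in\F_p^\times} c^k = -1$ if $(p-1)\mid k$ and $k>0$, and $=0$ otherwise. Hence $\chi_{y^k}(\rho_{\F_p}) = -\sum_{z\in{\F_p}^\ast} z^k$ already in the $n=1$ case (here ${\F_p}^\ast$ is $1$-dimensional, and $z$ ranges over its nonzero elements, i.e.\ over the $p-1$ nonzero scalar multiples of a generator). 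Then for general $n$, I would use $\rho_{\F_p^n}=\rho_{\F_p}\vee\cdots\vee\rho_{\F_p}$, apply Theorem~\ref{wedgeformula} $n-1$ times to get $\chi_{y^k}(\rho_{\F_p^n}) = (-1)^{n-1}\sum \chi_{y^{k_1}}(\rho_{\F_p})\times\cdots\times\chi_{y^{k_n}}(\rho_{\F_p})$ summed over $k_1+\cdots+k_n=k$ with appropriate binomial coefficients, substitute the $n=1$ formula, and recognize the result: expanding $-\sum_{z\in{\F_p^n}^\ast} z^k$ by writing $z=(z_1,\dots,z_n)$ in coordinates and using the multinomial theorem on $z^k=(z_1+\cdots+z_n)^k$ (valid since the $z_i$ lie in a commutative ring) reproduces exactly the same sum. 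Care is needed to handle the fact that $z$ ranges over \emph{all} of ${\F_p^n}^\ast$ including vectors with some coordinates zero, but those contribute the degree-zero terms that get discarded on the $\chi$ side, matching the "throw away degree-zero factors" prescription; this is precisely the bookkeeping that the sign $(-1)^{n-1}$ and the vanishing of $\sum_{c} c^0=p=0$ take care of.

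The main obstacle I anticipate is the bookkeeping of which terms survive. On the characteristic-class side one discards tensor factors of degree $0$; on the power-sum side the analogous phenomenon is that a coordinate $z_i=0$ kills the corresponding monomial unless the exponent $k_i$ is zero, in which case $z_i^0=1$ contributes — but then summing that trivial coordinate over $\F_p$ gives a factor of $p=0$ unless... and one must track that the surviving configurations on both sides, with their coefficients mod $p$ (governed by Lucas' theorem, as in section~\ref{nontriviality}), coincide exactly including signs. Organizing this as the single clean identity $-\sum_{z\in{\F_p^n}^\ast}z^k = -\sum_{z\in\F_p^n}z^k$ (since the $z=0$ term vanishes for $k>0$) and then expanding $\sum_{z\in\F_p^n}(z_1+\cdots+z_n)^k$ by multinomial expansion and the scalar sum $\sum_{z_i\in\F_p}z_i^{k_i}$ is the way to make the match transparent, rather than trying to match term-by-term with the $\chi$ formula. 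I expect the rest to be routine once this dictionary is set up.
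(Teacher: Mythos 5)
Your argument is correct and is essentially the paper's proof: both expand $\sum_{z\in{\F_p^n}^*}z^k$ in coordinates via the multinomial theorem, use the scalar-sum identity $\sum_{c\in\F_p}c^i=-1$ when $(p-1)\mid i$ and $i>0$ (and $0$ otherwise), and match the surviving terms and coefficients with equation~\eqref{multinomialformula}. The only cosmetic difference is that you re-derive \eqref{multinomialformula} via the wedge-sum formula and an explicit $n=1$ base case along the way, rather than invoking \eqref{multinomialformula} directly as the paper does.
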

\begin{proof}
By expanding $z=c_1z_1+\cdots+c_nz_n$ according to the dual 
$\{z_1,\dots,z_n\}$
of the standard basis, we obtain
\begin{align*}
	\sum_{z\in{{\F_p^n}^*}}z^k &=
	\sum_{c_1,\cdots,c_n\in\F_p} 
	(c_1z_1+\cdots + c_nz_n)^k
	\\
	&=\sum_{i_1+\cdots+i_n=k}\binom{k}{i_1,\dots,i_n} 
	\left(\sum_{c\in\F_p}c^{i_1}z^{i_1}\right)\times\cdots
	\times\left(\sum_{c\in\F_p}c^{i_n}z^{i_n}\right)
	\\
	&=(-1)^n\sum_{\substack{i_1+\cdots+i_n=k \\ (p-1)|i_j>0\ \forall j}}
	\binom{k}{i_1,\dots,i_n} z^{i_1}\times\cdots\times z^{i_n},
\end{align*}
which agrees with the right hand side of 
equation~\eqref{multinomialformula} (in the case $q=p, A=0, B=k$)
except for a factor of $-1$.
The last equality holds because
$\sum_{c\in\F_p}c^i$ is equal to $-1$
if $(p-1)$ divides $i$ and $i\neq0$, and to $0$ otherwise.
\end{proof}

In other words, the classes $-\chi_{y^k}(\rho_{\F_p^n})$
can be viewed as the 
power sum symmetric polynomials
evaluated on the 
elements of ${\F_p^n}^*$.  Since the Dickson polynomials are the elementary symmetric 
polynomials evaluated on the same elements, Newton's identity gives a relationship 
between the two.
To express it conveniently, write total classes (as elements of the degree-wise completed cohomology)
\[ D=\sum_{i=0}^n D_{p^n-p^i} = \prod_{z\in {\F_p^n}^*}(1+z)
=\sum_{k=0}^{p^n}\sigma_k({\F_p^n}^*), \]
\[ A=\sum_{k>0}(-1)^k\chi_{y^k}(\rho_{\F_p^n})  
=\sum_{k>0}(-1)^{k-1}p_k({\F_p^n}^*). \]
Here $\sigma_k$ is the $k$-th elementary symmetric polynomial, and
$p_k(x_1,\dots,x_\ell)=\sum_i x_i^k$ is the $k$-th power sum polynomial.

From Newton's identity
\[ \left(\sum_{i=0}^\infty \sigma_i\right)\left(\sum_{i=1}^\infty(-1)^{i-1} p_i\right)=\sum_{k=1}^\infty k\sigma_k, \]
we get
\[ DA=\sum_{k>0}kD_k = -D_{p^n-1}. \]
The last equality holds because, for each $k$, either $p|k$ or $D_k=0$ or $k=p^n-1$.
Consequently,
\begin{thm} 
We have
\[ 
	\pushQED{\qed} 
	A=-D_{p^n-1}\cdot D\inv. 
	\qedhere 
	\popQED
\]
\end{thm}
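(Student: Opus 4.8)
The plan is to read the formula off from the identity $DA=-D_{p^n-1}$ just established, the only missing ingredient being the invertibility of the total Dickson class $D$.

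First I would pin down the ambient ring. All the classes in question lie in the degree-wise completion $\prod_d H^d(\F_p^n)$ of the polynomial subalgebra $\F_p[{\F_p^n}^*]$ (the sums defining $D$, $A$, and $D^{-1}$ are each degree-wise finite). In such a completed graded $\F_p$-algebra, any element of the form $1+m$ with $m$ concentrated in strictly positive degrees is a unit, with inverse the degree-wise finite series $\sum_{j\geq0}(-m)^j$. Applying this to $D=\sum_{i=0}^{n}D_{p^n-p^i}$: its degree-$0$ component is $D_{p^n-p^n}=D_0=\sigma_0({\F_p^n}^*)=1$, while the remaining summands $D_{p^n-p^{n-1}},\dots,D_{p^n-1}$ all have positive degree. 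Hence $D$ is invertible.

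Now I would combine this with the chain already in place: the Newton identity invoked above gives $DA=\sum_{k>0}kD_k$ with $D_k=\sigma_k({\F_p^n}^*)$, and since $D_k=0$ unless $k=p^n-p^i$ and only $k=p^n-1\equiv-1\pmod p$ contributes a unit coefficient, the right-hand side collapses to $-D_{p^n-1}$. Multiplying $DA=-D_{p^n-1}$ through by $D^{-1}$ gives $A=-D_{p^n-1}\cdot D^{-1}$.

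I do not anticipate a genuine obstacle; essentially all of the content---Newton's identity and the mod-$p$ collapse $\sum_{k>0}kD_k=-D_{p^n-1}$---is discharged before the statement. The only thing to be careful about is the (very mild) formal point that one should pass to the degree-wise completed cohomology in order to make sense of $D^{-1}$, and that $D$ indeed has constant term $1$ there.
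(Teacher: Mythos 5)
Your proof is correct and takes essentially the same route as the paper: the theorem is read off from $DA=-D_{p^n-1}$ by multiplying through by $D^{-1}$, and your only addition is spelling out why $D$ is invertible in the completed ring (constant term $D_0=\sigma_0=1$), a point the paper leaves implicit.
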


We can write the lowest terms more explicitly:
\[ A = -D_{p^n-1}(1-D_{p^n-p^{n-1}}-\cdots-D_{p^n-1}+\textrm{higher terms}) \]
using the calculation
\[
	D^{-1} 
	= (1+\tilde{D})^{-1} 
	= 1-\tilde{D} + \tilde{D}^2 -\cdots\
	= 1-\tilde{D} + \textrm{higher terms}
\]
where $\tilde{D} = D_{p^n-p^{n-1}}+\cdots+D_{p^n-1}$.
Consequently, we have
\[ \chi_{y^{2p^n-p^i-1}}(\rho_{\F_p^n})  =\pm D_{p^n-1}D_{p^n-p^i},
\quad 0\leq i \leq n, \]
and these are the only $\chi_{y^k}(\rho_{\F_p^n})$'s which are nonzero
for $k\leq 2(p^n-1)$.
Because the Dickson polynomials are algebraically independent, it follows that 
the $n$ classes
\[ 
	D_{p^n-1}
	\quad\text{and}\quad
	D_{p^n-1}D_{p^n-p^i},\quad 1\leq i \leq n-1,
\]
are algebraically independent.  We can deduce:
\begin{thm}
\label{thm:algindependence}
Suppose $n+1\leq N\leq p^n$.  Then the universal classes
\[ \chi_{y^{2p^n-p^i-1}}^{(N)}\in H^*(GL_N\F_p;\,\F_p),\quad 1\leq i\leq n \]
are algebraically independent.  In particular, our 
characteristic classes $\chi_\bullet^{(N)}$
generate a subring of Krull dimension at least $n$.
\end{thm}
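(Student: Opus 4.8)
The plan is to exhibit, for each $N$ with $n+1 \le N \le p^n$, an elementary abelian group together with a representation of dimension $N$ whose characteristic classes $\chi_{y^{2p^n-p^i-1}}$ are algebraically independent in its mod-$p$ cohomology; algebraic independence of the universal classes $\chi^{(N)}_{y^{2p^n-p^i-1}}$ then follows by naturality, since any polynomial relation among them would pull back to one among their images. The natural choice is $A=\F_p^n$: it was just shown that
\[
	\chi_{y^{2p^n-p^i-1}}(\rho_A) = \pm\, D_{p^n-1}D_{p^n-p^i}, \qquad 1 \le i \le n,
\]
(with $D_0=1$ for $i=n$) are algebraically independent in $H^\ast(A;\F_p)$, forming a polynomial subalgebra of the Dickson algebra. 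So it suffices to produce, for each $N$ in the range, a representation $\xi$ of $A$ with $\dim\xi=N$ and $J_1(\xi)\cong\rho_A$: by Theorem~\ref{J1} we then get $\chi_\alpha(\xi)=\chi_\alpha(J_1(\xi))=\chi_\alpha(\rho_A)$ for all $\alpha$.

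To build $\xi$ I would use a submodule of the regular representation $R=\F_p[A]$. As in the proof of Lemma~\ref{bigrep}, $J_1(R)\cong\rho_A$ is a submodule of dimension $n+1$, and for any submodule $M$ with $J_1(R)\subseteq M\subseteq R$ one has $J_1(M)=\{v\in M: I^2v=0\}=M\cap J_1(R)=J_1(R)\cong\rho_A$. So it remains only to realize every dimension $N\in\{n+1,\dots,p^n\}$ by such an $M$; equivalently, to find submodules of the quotient $R/J_1(R)$, of dimension $p^n-(n+1)$, of every dimension between $0$ and $p^n-(n+1)$. This holds because $\F_p[A]$ is a local Artinian ring whose only simple module is the trivial one-dimensional module, so every finite-dimensional $\F_p[A]$-module admits a composition series with one-dimensional subquotients, hence submodules of every dimension up to its own. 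Pulling such a chain back to $R$ yields the required $\xi$; the endpoints $N=n+1$ ($\xi=J_1(R)=\rho_A$) and $N=p^n$ ($\xi=R$) recover the cases already implicit in the text.

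Given $\xi$, the homomorphism $\xi\colon A\to GL_N\F_p$ induces an $\F_p$-algebra map $\xi^\ast\colon H^\ast(GL_N\F_p;\F_p)\to H^\ast(A;\F_p)$ with $\xi^\ast(\chi^{(N)}_\alpha)=\chi_\alpha(\xi)=\chi_\alpha(\rho_A)$, by Definition~\ref{def:chiclassdef1} and the previous paragraph. A nonzero polynomial $P$ with $P\big(\chi^{(N)}_{y^{2p^n-p-1}},\dots,\chi^{(N)}_{y^{2p^n-p^n-1}}\big)=0$ would push forward under $\xi^\ast$ to a relation among the $\chi_{y^{2p^n-p^i-1}}(\rho_A)$, contradicting their algebraic independence; hence the universal classes are algebraically independent. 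These $n$ algebraically independent homogeneous classes generate a polynomial subalgebra $\F_p[t_1,\dots,t_n]$ of Krull dimension $n$ inside the connected non-negatively graded subring of $H^\ast(GL_N\F_p;\F_p)$ generated by all the $\chi^{(N)}_\bullet$, which therefore has Krull dimension at least $n$.

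The one genuinely new ingredient, and the main obstacle, is the construction in the middle paragraph: realizing each intermediate dimension $N$ by a submodule of the regular representation squeezed between $J_1(\F_p[A])$ and $\F_p[A]$, and checking that passing to this submodule leaves $J_1$ unchanged. The composition-series observation makes this routine, but it is the step not already present in the excerpt; everything else is formal naturality plus the Dickson computation recalled just before the theorem. A secondary point to state carefully is the passage from ``$n$ algebraically independent elements'' to ``Krull dimension $\ge n$ of the subring they lie in'', which uses that this subring is a connected graded $\F_p$-algebra.
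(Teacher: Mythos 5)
Your proof is correct and follows essentially the same path as the paper's: reduce via Theorem~\ref{J1} and the Dickson computation to producing, for each $N$ in the stated range, an $N$-dimensional representation $\xi$ of $\F_p^n$ with $J_1(\xi)\cong\rho_{\F_p^n}$, then obtain $\xi$ as an intermediate submodule $J_1(\xi_{\F_p^n})\leq\xi\leq\xi_{\F_p^n}$, noting that all dimensions between $n+1$ and $p^n$ are realizable because every subquotient over $\F_p[\F_p^n]$ has a filtration with one-dimensional quotients. Your explicit check that $J_1(M)=M\cap J_1(R)=J_1(R)$ and the remark about Krull dimension via the connected graded structure merely spell out steps that the paper leaves implicit (``upper-triangularizing the quotient'').
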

\begin{proof}
In light of the above discussion and Theorem~\ref{J1}, it suffices to produce a 
representation $\xi$ of $\F_p^n$ with $\dim(\xi)=N$ and $J_1(\xi)=\rho_{\F_p^n}$.
In the case $N=p^n$, we have the representation $\xi=\xi_{\F_p^n}$ of Lemma~\ref{bigrep}.
In the general case, any $N$-dimensional subrepresentation $\xi$ with
\[ \rho_{\F_p^n}=J_1(\xi_{\F_p^n})\leq\xi\leq\xi_{\F_p^n} \]
will suffice.  Such subrepresentations exist with every dimension between
$n+1=\dim\rho_{\F_p^n}$ and $p^n=\dim\xi_{\F_p^n}$;
they can be constructed by upper-triangularizing the quotient
$\xi_{\F_p^n}/\rho_{\F_p^n}$.
\end{proof}

\section{Applications to $\aut(F_n)$ and $GL_n\Z$}
\label{sec:othergroups}
We assume $r=1$ and $\F=\F_p$ throughout the section.
In this section, our aim is to demonstrate the usefulness
of our characteristic classes for studying groups other 
than general linear groups over finite fields by 
presenting applications of our 
computations to the homology and cohomology
of automorphism groups of free groups and 
general linear groups over the integers.  
The applications are of a broadly 
similar type to those we have presented
for the cohomology of general linear groups over finite fields:
we construct large families of explicit nontrivial elements in the 
homology and cohomology of $\aut(F_{p^n})$ and $GL_{p^n}\Z$ 
living in the  unstable range where the homology and cohomology 
groups remain mysterious. In the case of homology, 
the classes are not only nontrivial, but in fact indecomposable
in the rings $H_\ast(\bigsqcup_{n\geq 0} B\aut(F_n))$ and 
$H_\ast(\bigsqcup_{n\geq 0} BGL_n\Z)$. (We will indicate the ring 
structures below.)

Our starting point is the observation that the 
regular representation $\rho_\mathrm{reg}$ of $\F_p^n$
factors as the composite
\begin{equation}
\label{eq:regrepfactors}
	\F_p^n 
	\xto{\ c\ }
	\Sigma_{p^n} 
	\xto{\ i\ }
	\aut(F_{p^n})
	\xto{\ \pi_\mathrm{ab}\ }
	GL_{p^n}\Z
	\xto{\ \rho_\mathrm{can}\ }
	GL_{p^n}\F_p
\end{equation}
where $c$ is the Cayley embedding, $i$ is the embedding sending
a permutation to the corresponding automorphism of the free group 
$F_{p^n}$ given by permuting generators,
$\pi_\mathrm{ab}$ is given by abelianization,
and $\rho_\mathrm{can}$ is the canonical representation of $GL_{p^n}\Z$
on $\F_{p^n}$  given by reduction mod $p$.
By Lemma~\ref{bigrep} and Theorem~\ref{J1},
in the case $r=1$, the characteristic classes of 
the regular representation $\rho_\mathrm{reg}$ of $\F_p^n$
agree with those of the basic representation $\rho_{\F_p^n}$:
\begin{equation}
\label{eq:regandbasic}
\chi_\alpha(\rho_\mathrm{reg}) = \chi_\alpha(\rho_{\F_p^n})
\end{equation}
for all $\alpha$. Thus we obtain
\begin{thm}
\label{thm:glzautfn}
Suppose $\alpha\in H^\ast(GL_2\F_p;\,\F_p)$ is any class for which 
$\chi_\alpha(\rho_{\F_p^n}) \neq 0$. Then 
\[
	\chi_\alpha(\rho_\mathrm{can}) 
	\neq 
	0 \in H^\ast(GL_{p^n}\Z;\,\F_p)
\]
and
\[
	\chi_\alpha( \pi_\mathrm{ab}^\ast( \rho^{}_\mathrm{can}))
	\neq 
	0 \in H^\ast(\aut(F_{p^n});\,\F_p). 
\]
Moreover, if $\chi_\alpha(\rho_{\F_p^n})$ is non-nilpotent,
so are $\chi_\alpha(\rho_\mathrm{can})$ and 
$\chi_\alpha( \pi_\mathrm{ab}^\ast( \rho^{}_\mathrm{can}))$. 
\qed
\end{thm}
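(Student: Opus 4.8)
The plan is to deduce the theorem purely from the naturality of the characteristic classes (Theorem~\ref{thm:chiclassprops}), the factorization~\eqref{eq:regrepfactors}, and the comparison~\eqref{eq:regandbasic}. I would begin by naming the relevant composites: write
\[
	f = \pi_\mathrm{ab}\circ i \circ c \colon \F_p^n \longto GL_{p^n}\Z
	\+
	g = i\circ c\colon \F_p^n \longto \aut(F_{p^n}),
\]
so that, by~\eqref{eq:regrepfactors}, the pulled-back representations $f^\ast\rho_\mathrm{can}$ and $g^\ast(\pi_\mathrm{ab}^\ast\rho_\mathrm{can})$ of $\F_p^n$ are both equal to the regular representation $\rho_\mathrm{reg}$.

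Next I would apply naturality. The identity $\chi_\alpha(h^\ast\rho) = h^\ast\chi_\alpha(\rho)$, taken with $h = f$ and with $h = g$, yields
\[
	f^\ast\chi_\alpha(\rho_\mathrm{can}) = \chi_\alpha(\rho_\mathrm{reg})
	\+
	g^\ast\chi_\alpha(\pi_\mathrm{ab}^\ast\rho_\mathrm{can}) = \chi_\alpha(\rho_\mathrm{reg}).
\]
By~\eqref{eq:regandbasic} the common right-hand side equals $\chi_\alpha(\rho_{\F_p^n})$, which is nonzero by hypothesis. Since a cohomology class whose image under a ring homomorphism is nonzero must itself be nonzero, this immediately gives $\chi_\alpha(\rho_\mathrm{can})\neq 0$ in $H^\ast(GL_{p^n}\Z)$ and $\chi_\alpha(\pi_\mathrm{ab}^\ast\rho_\mathrm{can})\neq 0$ in $H^\ast(\aut(F_{p^n}))$.

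For the non-nilpotence assertion I would argue by contradiction. Suppose $\chi_\alpha(\rho_{\F_p^n})$ is non-nilpotent but $\chi_\alpha(\rho_\mathrm{can})^k = 0$ for some $k\geq 1$. Applying the ring homomorphism $f^\ast$ and using the displayed formula gives $\chi_\alpha(\rho_{\F_p^n})^k = (f^\ast\chi_\alpha(\rho_\mathrm{can}))^k = 0$, a contradiction; hence $\chi_\alpha(\rho_\mathrm{can})$ is non-nilpotent. The same computation with $g^\ast$ in place of $f^\ast$ handles $\chi_\alpha(\pi_\mathrm{ab}^\ast\rho_\mathrm{can})$.

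I do not expect a genuine obstacle here; the argument is essentially bookkeeping. The only points worth a sentence are that $\rho_\mathrm{can}$ and $\pi_\mathrm{ab}^\ast\rho_\mathrm{can}$ — and hence the composites $f$ and $g$ — are defined only up to conjugacy, but conjugate homomorphisms induce the same map on cohomology, so all the classes and pullback maps above are unambiguous; and that $p^n\geq 2$ for $n\geq 1$, so that $\chi_\alpha$ is indeed defined for every representation appearing in~\eqref{eq:regrepfactors}.
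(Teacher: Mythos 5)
Your proof is correct and is precisely the argument the paper has in mind: the theorem is stated with an immediate \qed following the factorization~\eqref{eq:regrepfactors} and the comparison~\eqref{eq:regandbasic}, and the intended justification is exactly the naturality computation you carry out. Your write-up simply makes the implicit bookkeeping (the composites $f$ and $g$, the pullback along a ring homomorphism detecting nonvanishing and non-nilpotence) explicit.
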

\noindent
We remind the reader that 
the question of when $\chi_\alpha(\rho_{\F_p^n})$ is nonzero 
was given a complete answer in Proposition~\ref{primecasenonvanishing}.

We now turn to the application to homology groups.
The homology groups of each of the spaces
\[
\bigsqcup_{n\geq 0} B\Sigma_n,
\quad
\bigsqcup_{n\geq 0} B\aut(F_n),
\quad
\bigsqcup_{n\geq 0} BGL_n\Z
\quad
\text{and}
\quad
\bigsqcup_{n\geq 0} BGL_n\F_p
\]
are highly structured.
First, in each case the homology carries a ring structure.
These structures are 
induced by
disjoint unions of sets (yielding homomorphisms 
$\Sigma_n \times \Sigma_m \to \Sigma_{n+m}$);
by free products of free groups
(yielding homomorphisms 
$\aut(F_n) \times \aut(F_m) \to \aut(F_{n+m})$);
and by direct sums of free modules
(yielding the block-sum homomorphisms on general linear groups).
Second, with the apparent exception of automorphism groups of 
free groups, in each case the homology groups carry an additional
product, which we denote by $\circ$.
These products are induced
by direct products of sets (yielding 
homomorphisms $\Sigma_n \times \Sigma_m \to \Sigma_{nm}$)
and tensor products of free modules
(yielding homomorphisms 
$GL_nR \times GL_mR \to GL_{nm}R$).
The maps induced by the analogues
\[
	\Sigma_{n} 
	\xto{\ i\ }
	\aut(F_{n})
	\xto{\ \pi_\mathrm{ab}\ }
	GL_{n}\Z
	\xto{\ \rho_\mathrm{can}\ }
	GL_{n}\F_p,
	\quad
	n\geq 0 
\]
of the maps appearing in \eqref{eq:regrepfactors} 
are compatible with all this structure in the sense that 
all give ring homomorphisms, and both
$(\pi_\mathrm{ab}i)_\ast$ and $(\rho_\mathrm{can})_\ast$
preserve the $\circ$-product.
For each $k\geq 0$, let $z_k \in H_\ast(\F_p)$ 
be the dual of $y^k\in H^\ast(\F_p)$, and write
\begin{align*}
E_k &= c_\ast (z_k) \in H_\ast(\Sigma_p)
\\
E^{\Z}_k &= (\pi_\mathrm{ab} i)_\ast(E_k) \in H_\ast(GL_p\Z)
\\
E^{\F_p}_k &= (\rho_\mathrm{can})_\ast (E^{\Z}_k) \in H_\ast(GL_p\F_p),
\end{align*}
where $c\colon \F_p \to \Sigma_p$ is the Cayley embedding.
With this notation, we have:
\begin{thm}
\label{thm:indecomposables}
Suppose $B_1,\ldots,B_n \in \Z$ are positive multiples of $p-1$ such that 
there is no carry when $B_1,\ldots,B_n$ are added together in base $p$.
Let $B= B_1 + \cdots + B_n$.
Then the following elements are indecomposable in their respective rings:
\begin{enumerate}[(i)]
\item $i_\ast (E_{B_1} \circ \cdots\circ E_{B_n})
	 \in 
	 H_{2B}(\aut(F_{p^n});\,\F_p)$ 
	 in 
	 $H_\ast (\bigsqcup_{k\geq 0} B\aut(F_k);\,\F_p)$
\item $E^\Z_{B_1} \circ \cdots \circ E^\Z_{B_n} 
	\in 
	H_{2B}(GL_{p^n}\Z;\,\F_p)$ 
	in 
	$H_\ast (\bigsqcup_{k\geq 0} BGL_k\Z;\,\F_p)$
\item $E^{\F_p}_{B_1} \circ \cdots \circ E^{\F_p}_{B_n} 
	\in 
	H_{2B}(GL_{p^n}\F_p;\,\F_p)$ 
	in 
	$H_\ast (\bigsqcup_{k\geq 0} BGL_k{\F_p};\,\F_p)$.
\end{enumerate}
(For $p=2$, replace $2B$ by $B$.)
\end{thm}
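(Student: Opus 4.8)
The plan is to prove indecomposability in each of the three rings by the standard duality argument: for each ring I will exhibit a mod-$p$ cohomology class that annihilates every decomposable element but pairs nontrivially with the given homology class. For the ring $\bigoplus_k H_\ast(BGL_k\F_p)$ the class will be the universal class $\chi_{y^B}^{(p^n)}\in H^{2B}(GL_{p^n}\F_p)$ (degree $B$ when $p=2$); for $\bigoplus_k H_\ast(BGL_k\Z)$ and $\bigoplus_k H_\ast(B\aut(F_k))$ it will be the pullbacks $\rho_\mathrm{can}^\ast\chi_{y^B}^{(p^n)}=\chi_{y^B}(\rho_\mathrm{can})$ and $(\rho_\mathrm{can}\pi_\mathrm{ab})^\ast\chi_{y^B}^{(p^n)}=\chi_{y^B}(\pi_\mathrm{ab}^\ast\rho_\mathrm{can})$ along the maps of \eqref{eq:regrepfactors}. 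By Remark~\ref{rk:primitives}, $\chi_{y^B}^{(p^n)}$ is primitive for the rank-graded coproduct induced by block sums; since a primitive class of rank $p^n$ has no coproduct components of intermediate rank, the functional $\langle\chi_{y^B}^{(p^n)},-\rangle$ kills every product $u\cdot v$ of two homology classes of positive rank, hence every decomposable. Because $\rho_\mathrm{can}$, $\pi_\mathrm{ab}$ and $i$ are compatible with block sums and free products, they induce ring homomorphisms on homology, so pulling back preserves the property of annihilating decomposables. Thus the theorem reduces to showing the three pairings are nonzero.

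Next I would reduce all three pairings to a single computation. Since the $\circ$-product on $\bigoplus_k H_\ast(B\Sigma_k)$ is induced by the product actions $\Sigma_a\times\Sigma_b\to\Sigma_{ab}$, and the product action of $\Sigma_p\times\cdots\times\Sigma_p$ on $\{1,\dots,p\}^n=\F_p^n$ restricts on the subgroup of translations to the Cayley action of $\F_p^n$, we have $E_{B_1}\circ\cdots\circ E_{B_n}=\mathrm{Cay}_\ast(z_{B_1}\otimes\cdots\otimes z_{B_n})$ in $H_\ast(B\Sigma_{p^n})$, where $\mathrm{Cay}\colon\F_p^n\to\Sigma_{p^n}$ is the Cayley embedding and the tensor product is formed in $H_\ast(\F_p)^{\otimes n}=H_\ast(\F_p^n)$. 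Pushing forward, using that $(\pi_\mathrm{ab} i)_\ast$ and $(\rho_\mathrm{can})_\ast$ preserve $\circ$ and carry $E_{B_i}\mapsto E^\Z_{B_i}\mapsto E^{\F_p}_{B_i}$, and invoking the factorization \eqref{eq:regrepfactors} of $\rho_\mathrm{reg}$, each of the three pairings becomes $\langle\chi_{y^B}^{(p^n)},\,(\rho_\mathrm{reg})_\ast(z_{B_1}\otimes\cdots\otimes z_{B_n})\rangle=\langle\chi_{y^B}(\rho_\mathrm{reg}),\,z_{B_1}\otimes\cdots\otimes z_{B_n}\rangle$, which equals $\langle\chi_{y^B}(\rho_{\F_p^n}),\,z_{B_1}\otimes\cdots\otimes z_{B_n}\rangle$ by \eqref{eq:regandbasic}.

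Finally I would evaluate this last pairing using the explicit formula \eqref{multinomialformula} for $\chi_{y^B}(\rho_{\F_p^n})$ in the case $q=p$, $A=0$. Its monomials $y^{B_1}\times\cdots\times y^{B_n}$ are dual to the tensor-product basis elements $z_{B_1}\otimes\cdots\otimes z_{B_n}$, so pairing against $z_{B_1}\otimes\cdots\otimes z_{B_n}$ extracts precisely the summand indexed by the given tuple $(B_1,\dots,B_n)$, up to a harmless global sign. That summand occurs because each $B_i$ is a positive multiple of $p-1$, and its coefficient is the multinomial coefficient $\binom{B}{B_1,\dots,B_n}$, which by iterated Lucas (as in section~\ref{nontriviality}) is nonzero modulo $p$ exactly because there is no carry when $B_1,\dots,B_n$ are added in base $p$. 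Hence the pairing is a unit in $\F_p$, and all three elements are indecomposable; the case $p=2$ is identical with every cohomological degree halved.

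The main obstacle is not conceptual but lies in the bookkeeping: one must be careful that ``decomposable'' in these bigraded rings means a product of two classes of positive \emph{rank} (so that the rank-graded primitivity of Remark~\ref{rk:primitives} already suffices, and in particular no separate argument is needed for the decomposables obtained by stabilizing from lower rank), that the various pushforward maps really are compatible with both products exactly as asserted in the text, and that the cross-product/tensor-product duality pairing is tracked correctly with its Koszul signs, which are units mod $p$ and hence immaterial. Once this framework is in place none of the three cases is harder than the others.
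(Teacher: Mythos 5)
Your proposal is correct and follows essentially the same route as the paper's proof: reduce all three cases to pairing against the primitive $\chi_{y^B}^{(p^n)}$, identify $E^{\F_p}_{B_1}\circ\cdots\circ E^{\F_p}_{B_n}$ with $(\rho_\mathrm{reg})_\ast(z_{B_1}\times\cdots\times z_{B_n})$ via the factorization of the Cayley embedding through $\Sigma_p^n$, use equation~\eqref{eq:regandbasic} to replace $\rho_\mathrm{reg}$ by $\rho_{\F_p^n}$, and read off the coefficient $\pm\binom{B}{B_1,\ldots,B_n}$ from equation~\eqref{multinomialformula}, nonzero by Lucas and the no-carry hypothesis. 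The only cosmetic difference is that you handle all three rings in parallel by pulling back the primitive cohomology class, whereas the paper first pushes the homology classes forward to the $GL_k\F_p$ ring (using that ring homomorphisms preserve decomposability) and checks only case (iii) there; these are dual phrasings of the same argument. (Incidentally, the superscript $\chi_{y^B}^{(p^n)}$ you write is the correct one; the paper's $\chi_{y^B}^{(p^B)}$ is a typo.)
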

\begin{proof}
The first two elements map to the third under the ring homomorphisms
$(\pi_{\mathrm{ab}})_\ast$
and
$(\rho_\mathrm{can})_\ast$,
so it suffices to prove the third claim.
Observe that the Cayley embedding $\F_p^n \to \Sigma_{p^n}$
factors up to conjugacy as 
\[
	\F_p^n \xto{\ c^{\times n}\ } \Sigma_p^n \longto \Sigma_{p^n} 
\]
where the latter map induces the iterated $\circ$-product.
Thus, by the factorization \eqref{eq:regrepfactors} of the
regular representation, we have
\[
	E^{\F_p}_{B_1} \circ \cdots \circ E^{\F_p}_{B_n }
	= 
	(\rho_\mathrm{reg})_\ast (z_{B_1}\times \cdots \times z_{B_n}).
\]
We obtain
\begin{align*}
\langle 
\chi_{y^B}^{(p^B)}, E^{\F_p}_{B_1} \circ \cdots \circ E^{\F_p}_{B_n} \rangle 
&=
\langle 
\chi_{y^B}(\rho_\mathrm{reg}),
z_{B_1}\times \cdots \times z_{B_n}
\rangle 
\\
&=
\langle 
\chi_{y^B}(\rho_{\F_p^n}),
z_{B_1}\times \cdots \times z_{B_n}
\rangle 
\\
&=
(-1)^{n-1} \binom{B}{B_1,\dots,B_n}
\\
&\neq 0 \in \F_p.
\end{align*}
Here the second equality follows from equation \eqref{eq:regandbasic},
the third from equation \eqref{multinomialformula}, and the 
final inequality from the choice of $B_1,\ldots,B_n$.
The claim now follows from Remark~\ref{rk:primitives}.
\end{proof}

\begin{remark}
In the case $p \neq 2$, we can obtain
further indecomposables in the aforementioned rings
by a similar argument 
by taking into account the exterior class in $H^\ast(\F_p)$.
We leave the formulation of the result to the reader.
\end{remark}

\begin{remark}
Analogues of Theorems~\ref{thm:glzautfn} and \ref{thm:indecomposables}
hold, by the same arguments,
for many other interesting groups through which 
the regular representation factors.
An example is $GL_{p^n}R$ for $R$ any ring surjecting onto $\F_p$,
such as $R = \Z/p^k$ or $R = \Z_p$.
\end{remark}

\begin{vista}
In addition to the classes $\chi_\alpha$ for 
$\alpha\in H^\ast(GL_2 \F_q)$,
one could define similar characteristic classes
for $\alpha \in H^\ast(GL_n \F_q)$ for any $n$
by setting, for a $G$-representation $\rho$ of
dimension $N \geq n$,
\[
	\chi_\alpha(\rho) = \rho^\ast \chi^{(N)}_\alpha \in H^\ast (G)
\]
where the class $\chi^{(N)}_\alpha \in H^\ast(GL_N\F_q)$
is defined in terms of the parabolic induction map $\Phi_{n,N}$
as $\chi^{(N)}_\alpha = \Phi_{n,N}(\alpha)$.
Compare with Definition~\ref{def:chiclassdef1} 
and Remark~\ref{rk:chiparindconn}.
What makes the case $n=2$ particularly interesting is 
the computability of $H^\ast(GL_2\F_q)$
and the existence of the coproduct structure 
appearing in the wedge sum formula of Theorem~\ref{wedgeformula}.
In view of the compatibility of the parabolic induction maps
under composition (Proposition~\ref{prop:parindcompat}),
the classes $\chi_\alpha$ for $\alpha$ of the form
$\chi_\beta^{(n)}$ 
will not yield any interesting new information, 
but it is possible that the classes $\chi_\alpha$
for $\alpha$, say, a product of some previously constructed classes
will. 
We leave the exploration of this idea for future work.  
\end{vista}

\section*{Acknowledgements}

The authors would like to thank Nathalie Wahl and Jesper Grodal
for valuable comments and advice and the anonymous referee
for useful suggestions and corrections.
The second author was supported by the Danish National Research Foundation
through the Centre for Symmetry and Deformation (DNRF92).

\bibliographystyle{amsalpha}
\bibliography{modular-characteristic-classes}

\end{document}